\theoremstyle{plain}
\newtheorem{thm}{Theorem}[section]
\newtheorem{lem}[thm]{Lemma}
\newtheorem{cor}[thm]{Corollary}
\newtheorem{prop}[thm]{Proposition}
\theoremstyle{definition}
\newtheorem{df}[thm]{Definition}
\newtheorem{eg}[thm]{Example}
\theoremstyle{remark}
\newtheorem{rem}[thm]{Remark}
\newcommand{\qad}{\phantom{={}}}
\numberwithin{equation}{section}
\newcommand{\E}{\mathbb{E}}
\newcommand{\M}{\mathbb{M}}
\newcommand{\N}{\mathbb{N}}
\renewcommand{\P}{\mathbb{P}}
\newcommand{\R}{\mathbb{R}}
\newcommand{\Z}{\mathbb{Z}}
\newcommand{\1}{\mbox{\rm1}\hspace{-0.25em}\mbox{\rm l}}
\newcommand{\cA}{\mathcal{A}}
\newcommand{\cC}{\mathcal{C}}
\newcommand{\cK}{\mathcal{K}}
\newcommand{\cN}{\mathcal{N}}
\newcommand{\cW}{\mathcal{W}}
\newcommand{\bfk}{\mathbf{k}}
\renewcommand{\a}{\alpha}
\newcommand{\dl}{\delta}
\newcommand{\eps}{\varepsilon}
\newcommand{\lm}{\lambda}
\newcommand{\om}{\omega}
\newcommand{\Om}{\Omega}
\newcommand{\sg}{\sigma}
\newcommand{\Sg}{\Sigma}
\newcommand{\del}{\partial}
\DeclareMathOperator{\cov}{Cov}
\DeclareMathOperator{\Lip}{Lip}
\DeclareMathOperator{\sgn}{sgn}
\DeclareMathOperator{\Supp}{Supp}
\DeclareMathOperator{\Sym}{Sym}
\DeclareMathOperator{\Tr}{Tr}
\DeclareMathOperator{\up}{up}
\DeclareMathOperator{\Var}{Var}
\title{Central limit theorem for linear eigenvalue statistics of the adjacency matrices of random simplicial complexes}
\author{Shu \textsc{Kanazawa}\thanks{Kyoto University Institute for Advanced Study, Kyoto University, Kyoto 606-8501, Japan;
Department of Mathematics, The Ohio State University, Columbus 43210, USA. \texttt{kanazawa.shu.2w@kyoto-u.ac.jp}} and Khanh Duy \textsc{Trinh}\thanks{Global Center for Science and Engineering, Waseda University, Tokyo 169-8555, Japan. \texttt{trinh@aoni.waseda.jp}}}
\date{}
\begin{document}
\maketitle

\begin{abstract}
We study the adjacency matrix of the Linial--Meshulam complex model, which is a higher-dimensional generalization of the Erd\H os--R\'enyi graph model.
Recently, Knowles and Rosenthal proved that the empirical spectral distribution of the adjacency matrix is asymptotically given by Wigner's semicircle law in a diluted regime.
In this article, we prove a central limit theorem for the linear eigenvalue statistics for test functions of polynomial growth that is of class $C^2$ on a closed interval.
The proof is based on higher-dimensional combinatorial enumerations and concentration properties of random symmetric matrices.
Furthermore, when the test function is a polynomial function, we obtain the explicit formula for the variance of the limiting Gaussian distribution.
\end{abstract}

\providecommand{\keywords}[1]
{
  \small	
  \textbf{Keywords } #1
}
\providecommand{\subjclass}[1]
{
  \small	
  \textbf{Mathematics Subject Classification } #1
}

\keywords{Linial--Meshulam complex, adjacency matrix, linear eigenvalue statistics, central limit theorem}

\subjclass{60C05, 60B20, 05E45}


\section{Introduction}

\subsection{Background and prior work}
Random graph theory has been providing a good understanding of large complex systems such as social and biological networks, where each vertex and edge represent an object and a connection between two individual objects, respectively.
The systematic study of random graphs has its origin in the work of Erd\H os and R\'enyi~\cite{ER59,ER60}.
Given $n\in\N$ and $p\in[0,1]$, an Erd\H os--R\'enyi graph $G_{n,p}$ is a random graph on $[n]\coloneqq\{1,2,...,n\}$, where each edge beween two vertices is placed independently of others with probability $p$.
There has been considerable study of the spectrum of the adjacency matrix and the Laplacian matrix of $G_{n,p}$ (see, e.g.,~\cite{BG01,BL10,BLS11,Coja07,DJ10,EKYY12,EKYY13,FO05,Gu21,HKP21,HLY15,Ji12,JL18,KKM06,SB10,SB12,TVW13}).
We review a classical result on the asymptotic distribution of the spectrum of the adjacency matrix of $G_{n,p}$ for a fixed $p\in(0,1)$ and $n$ tending to infinity.
Let $A(G_{n,p})$ denote the adjacency matrix of $G_{n,p}$.
We note that it is more convenient to consider an appropriately centered and scaled adjacency matrix:
\[
H(G_{n,p})\coloneqq\frac1{\sqrt{np(1-p)}}(A(G_{n,p})-\E[A(G_{n,p})]).
\]
Under this normalization, for fixed $p\in (0,1)$, the matrix $H(G_{n,p})$ is a (classical) Wigner matrix and thus Wigner's semicircle law holds, that is, the empirical distribution of the eigenvalues converges weakly to the standard semicircle distribution almost surely.

Let us introduce Wigner's semicircle law in more details. We denote the eigenvalues of $H(G_{n,p})$  by
\[
\lm_1[H(G_{n,p})]\ge\lm_2[H(G_{n,p})]\ge\cdots\ge\lm_n[H(G_{n,p})],
\]
and define their empirical distribution, called the \textit{empirical spectral distribution} of $H(G_{n,p})$, by
\[
L_{H(G_{n,p})}\coloneqq\frac1n\sum_{i=1}^n\dl_{\lm_i[H(G_{n,p})]}.
\]
Here, $\dl_\lm$ indicates the Dirac measure at $\lm$, i.e., for any Borel set $B\subset\R$,
\[
\dl_\lm(B)\coloneqq\begin{cases}
1	&\text{if $\lm\in B$,}\\
0	&\text{if $\lm\notin B$.}
\end{cases}
\]
Wigner's semicircle law states that with probability one, the empirical spectral distribution $L_{H(G_{n,p})}$ converges weakly to the standard semicircle distribution 
$
\nu(dx)\coloneqq (2\pi)^{-1}{\sqrt{4-x^2}}\1_{\{|x|\le2\}}dx.
$
Since the semicircle distribution $\nu$ has compact support, the law is a consequence of the following almost sure convergence for moments: for any $k = 0,1,\dots,$
\[
	\langle L_{H(G_{n,p})}, x^k\rangle
    \coloneqq\int_\R x^k\,dL_{H(G_{n,p})}(x)
    = \frac{1}{n} \sum_{i=1}^n (\lm_i[H(G_{n,p})])^k \xrightarrow[n\to\infty]{} \int_{-2}^2 x^k\,d\nu(x) \quad \text{almost surely.}
\]
Here and in what follows, we use the following abbreviated notation for integrals:
\[
\langle\mu,f\rangle\coloneqq\int_\R f(x)\,d\mu(x)
\]
for any probability measure $\mu$ on $\R$ and integrable function $f\colon\R\to\R$ with respect to $\mu$. We remark that the above convergence still holds when $p$ varies as a function of $n$ with $n p \to \infty$. In the so-called sparse regime, that is, when $np$ is of order one,
the matrix $H(G_{n, p})$ is an example of Wigner matrix with exploding moments and their empirical spectral distribution is known to converge to a different limiting distribution~\cite{BG01,BGM14,BL10,BLS11,JL18,KKM06,KSV04, Ma17,Za06}.
The properties of the limiting distribution is extensively studied in~\cite{AB21,BSV17,CS21,Sa15}.

Gaussian fluctuations around the limit for Wigner matrices have been well-studied. For fixed $p \in (0,1)$, central limit theorems (CLTs) for moments of the empirical spectral distribution have the following form: 
\[
	n(\langle L_{H(G_{n,p})}, x^k\rangle - \E\langle L_{H(G_{n,p})}, x^k\rangle) \xrightarrow[n\to\infty]{d} \cN(0, \sigma_k^2)
\]
for some $\sg_k^2\ge0$.
Here, $\xrightarrow[n\to\infty]{d}$ denotes the convergence in distribution as $n\to\infty$, and $\cN(\mu, \sigma^2)$ denotes the Gaussian distribution with mean $\mu$ and variance $\sigma^2$. The joint convergence also holds.
Consequently, a CLT holds for any polynomial test function $f$, that is, 
\[
n(\langle L_{H(G_{n,p})}, f\rangle - \E [\langle L_{H(G_{n,p})}, f\rangle]) \xrightarrow[n\to\infty]{d} \cN(0, \sigma_f^2)
\]
for some $\sigma_f^2 \ge 0$. Extending polynomial type CLTs to a larger class of test functions and explicit formula for the limiting variance have also been known (see~\cite[Section~2.7]{AGZ11} and reference therein; see also~\cite[Theorem~1]{Sh11}).
In different regimes, we refer the reader to \cite{SB12} in the case that $n p \to \infty$ with $p \to 0$; and to \cite{BGM14, SB10} for the sparse regime.

In this article, we consider random simplicial complexes, which can be regarded as a higher-dimensional generalization of random graphs (see Subsection~\ref{ssec:high_dim_tree} for the precise definition of simplicial complexes).
The study of random simplicial complexes has its origin in the work of Linial and Meshulam~\cite{LM06}.
Given $n\in\N$ and $p\in[0,1]$, they introduced a random $2$-dimensional simplicial complex on $[n]$, later known as the \textit{$2$-Linial--Meshulam complex}, where all the edges between two vertices are placed in advance (the underlying graph is the complete graph $K_n$) and each triangle formed by three vertices is filled by a face independently of others with probability $p$.
More generally, the \textit{$d$-Linial--Meshulam complex} was introduced by Meshulam and Wallach~\cite{MW09} for each $d\in\N$.
In the following, let $\cK_n\coloneqq2^{[n]}$ denote the complete complex on $[n]$, which has all the possible simplices in $[n]$.
Furthermore, for $0\le i<n$, let $\cK_n^{(i)}$ denote the $i$-skeleton of $\cK_n$, that is, the subcomplex of $\cK_n$ consisting of all the simplices in $\cK_n$ of dimension at most $i$.
\begin{df}[$d$-Linial--Meshulam complex]
Let $d,n\in\N~(n>d)$ and $p\in[0,1]$.
The \textit{$d$-Linial--Meshulam complex $Y^d_{n,p}$} is a random subcomplex of $\cK_n$ given by
\[
\P(Y^d_{n,p}=Y)=\begin{cases}
p^{f_d(Y)}(1-p)^{\binom n{d+1}-f_d(Y)}	&\text{if $\cK_n^{(d-1)}\subset Y\subset\cK_n^{(d)}$,}\\
0							&\text{otherwise}
\end{cases}
\]
for any subcomplex $Y\subset\cK_n$.
Here, $f_d(Y)$ indicates the number of $d$-simplices in $Y$.
Note that the $1$-Linial--Meshulam complex $Y^1_{n,p}$ can be regarded as the Erd\H os--R\'enyi graph $G_{n,p}$.
\end{df}

It is a natural question to ask whether analogous results to Wigner's semicircle law and Gaussian fluctuations hold for $d$-Linial--Meshulam complexes.
In order to answer this question, we briefly explain the notion of the higher-dimensional adjacency matrices for simplicial complexes, which was introduced in~\cite{GW12} (see Subsection~\ref{ssec:adjacency_matrix} for the precise definition).
Given a simplicial complex $X$ of dimension at least $k\ge0$, the $k$th adjacency matrix, denoted by $A_k(X)$, is a matrix whose rows and columns are indexed by the $k$-simplices in $X$.
The matrix $A_k(X)$ has entries in $\{0,\pm1\}$, where each entry is nonzero if and only if two distinct $k$-simplices are contained in a common $(k+1)$-dimensional simplex, and in such cases, the sign of the entry is determined according to the orientation of the two $k$-simplices.
The adjacency matrix $A_k(X)$ has a simple connection to the combinatorial up Laplacian $L_k^{\text{up}}(X)$ for simplicial complexes, introduced in~\cite{Ec45}: $A_k(X)=D_k(X)-L_k^{\text{up}}(X)$, where $D_k(X)$ is the degree matrix of $k$-simplices (the degree of a $k$-simplex is the number of its $(k+1)$-dimensional cofaces).
Note that the zeroth adjacency matrix coincides with the usual adjacency matrix of the underlying graph.

We are interested in the $(d-1)$st adjacency matrices $A_{d-1}(Y^d_{n,p})$ of $d$-Linial--Meshulam complexes $Y^d_{n,p}$.
Although the appearance of each $d$-simplex is independent of others, the above-diagonal entries of $A_{d-1}(Y^d_{n,p})$ are no longer independent whenever $d\ge2$.
Indeed, since each $d$-simplex has $d+1$ number of $(d-1)$-dimensional faces, the appearance of the $d$-simplex affects $\binom{d+1}2$ entries above the diagonal.
In this sense, the adjacency matrix of random simplicial complexes is an important model by its own from the point of view of the random matrix theory.
We again consider an appropriately centered and scaled adjacency matrix:
\[
H_{d-1}(Y^d_{n,p})
\coloneqq\frac1{\sqrt{np(1-p)}}(A_{d-1}(Y^d_{n,p})-\E[A_{d-1}(Y^d_{n,p})]).
\]
In the following, we regard $p=p(n)$ as a function of $n$, and write $H_n
=H_{d-1}(Y^d_{n,p})$ for simplicity.
Recently, Knowles and Rosenthal~\cite{KR17} proved the following theorem, which is a higher-dimensional generalization of Wigner's semicircle law for the centered adjacency matrices of the random graph $G_{n, p}$.
\begin{thm}[{\cite[Theorem~3.1]{KR17}}]\label{thm:KR17_thm3.1}
If $\lim_{n\to\infty}np(1-p)=\infty$, then the empirical spectral distribution $L_{H_n}$ converges weakly to $\nu_d$ almost surely.
Here,
\[
\nu_d(dx)\coloneqq\frac{\sqrt{4d-x^2}}{2\pi d}\1_{\{|x|\le2\sqrt d\}}dx
\]
is a semicircle distribution.
\end{thm}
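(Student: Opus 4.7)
The plan is to establish weak convergence via the method of moments combined with a variance estimate that upgrades convergence in expectation to the almost-sure statement. Setting $y = x/\sqrt{d}$ shows that $\nu_d$ is the pushforward of the standard semicircle distribution under $y \mapsto \sqrt{d}\,y$; in particular $\nu_d$ has compact support and is determined by its moments $\int x^{2m}\, d\nu_d = C_m d^m$ (with $C_m$ the $m$-th Catalan number), odd moments vanishing. It therefore suffices to verify, for each $k \in \N$, that
\begin{enumerate}
\item $\E\langle L_{H_n}, x^k\rangle \to \int x^k\, d\nu_d(x)$;
\item $\Var(\langle L_{H_n}, x^k\rangle)$ is summable in $n$, so that Borel--Cantelli combined with a standard approximation argument promotes moment convergence to almost-sure weak convergence.
\end{enumerate}

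For the moment computation, I would write
\[
\E\langle L_{H_n}, x^k\rangle = \frac{1}{\binom{n}{d}} \E[\Tr(H_n^k)] = \frac{1}{\binom{n}{d}} \sum_{(f_0, \ldots, f_{k-1})} \E\!\left[\prod_{i=0}^{k-1} H_n(f_i, f_{i+1})\right]
\]
(indices mod $k$), where the sum runs over cyclic sequences of $(d-1)$-simplices in $\cK_n$. A non-zero entry $H_n(f, f')$ is a centered signed Bernoulli divided by $\sqrt{np(1-p)}$, and entries associated with distinct $d$-simplices are independent, so a sequence contributes non-trivially only when every $d$-simplex $\sigma = f_i \cup f_{i+1}$ that appears is used at least twice. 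I would then classify sequences by the multiset of distinct $d$-simplices they visit and by their incidence pattern. The dominant contribution comes from \emph{tree walks}: $k = 2m$, each $d$-simplex used exactly twice, and the walk is a closed depth-first traversal of a tree on $m+1$ distinct $(d-1)$-simplices. A count gives asymptotically $C_m \cdot \binom{n}{d} \cdot (d(n-d))^m$ such walks ($C_m$ for the planar tree structure, $\binom{n}{d}$ for the starting simplex, and $d(n-d)$ fresh choices at each up-step); the expectation factor is $(np(1-p))^{-m}(p(1-p))^m = n^{-m}$; and the orientation signs combine to $+1$ because each tree edge is traversed once in each direction. Summing yields $C_m d^m$ in the limit, matching the moments of $\nu_d$. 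Walks that fail to be tree walks either lose free vertex choices (when the underlying $(d-1)$-simplex skeleton contains a cycle) or pick up extra powers of $(np(1-p))^{-1/2}$ from higher cumulants, and under $np(1-p) \to \infty$ each such correction vanishes.

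For the variance, I would expand $\Var(\Tr(H_n^k))$ as a sum over pairs of walks sharing at least one $d$-simplex; the sharing constraint forces at least one free vertex choice to be lost, producing an $O(n^{-2})$ factor after dividing by $\binom{n}{d}^2$. Alternatively, since $H_n$ is a function of the independent Bernoullis indexed by potential $d$-simplices and each such Bernoulli perturbs only $\binom{d+1}{2}$ entries of $H_n$, an Efron--Stein or bounded-differences inequality delivers the required concentration directly.

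The main obstacle is the combinatorial bookkeeping in the moment expansion. Unlike the Wigner case, where walks live on a graph with genuinely independent entries, here a single $d$-simplex simultaneously activates $\binom{d+1}{2}$ entries of $H_n$, so walks must be analysed on a higher-dimensional incidence structure; one must track which orbits of $d$-simplices a walk revisits, verify that orientation-induced signs align coherently on tree walks, and show that every other configuration is genuinely lower order. Making these error bounds sharp enough to survive the diluted regime $p(n) \to 0$ (subject only to $np(1-p) \to \infty$) is the technically delicate part, since in that range one cannot afford to lose powers of $n$ in the enumeration.
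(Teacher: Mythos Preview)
Your proposal is correct and follows essentially the same route as the paper (which in turn is summarizing \cite{KR17}): the moment method, with the expected $k$th moment computed via closed walks on $(d-1)$-simplices whose dominant contribution comes from walks whose support complex is a $d$-tree (yielding the $d^{k/2}\cC_{k/2}$ count), together with a variance estimate that feeds into Borel--Cantelli. Your identification of the $d(n-d)$ factor per up-step and the Catalan enumeration of the underlying planar tree is exactly the content of Lemma~\ref{lem:KR17_Lem3.11}(3); the sign claim is Lemma~\ref{lem:KR17_Lem3.11}(2), though the justification is slightly more delicate than ``each tree edge is traversed once in each direction'' since the two crossings of a $d$-simplex need not use the same pair of $(d-1)$-faces. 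Your Efron--Stein alternative for the variance is a reasonable suggestion but not what the paper does; the paper obtains the variance bound (Lemma~\ref{lem:KR17_Lem3.3}) directly from the covariance expansion over pairs of walks sharing a $d$-simplex, which is the special case $k=l$ of Theorem~\ref{thm:multi_CLT}(1).
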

By an additional argument involving Weyl's interlacing inequalities, the above theorem also holds for $A_{d-1}(Y^d_{n,p})$ itself instead of the centered adjacency operator~\cite[Theorem~2.5]{KR17}.
As pointed out in~\cite[Section~3]{KR17}, Theorem~\ref{thm:KR17_thm3.1} follows from the next two lemmas via the Weierstrass approximation theorem, a standard truncation argument, and the Borel--Cantelli lemma.
Such approach is known as the moment method.
\begin{lem}[{\cite[Lemma~3.2]{KR17}}]\label{lem:KR17_Lem3.2}
If $\lim_{n\to\infty}np(1-p)=\infty$, then for any $k\in\Z_{\ge0}$,
\[
\lim_{n\to\infty}\E\langle L_{H_n},x^k\rangle=\langle\nu_d,x^k\rangle=\begin{cases}
0			&\text{if $k$ is odd,}\\
d^{k/2}\cC_{k/2}	&\text{if $k$ is even.}
\end{cases}
\]
Here, $\cC_k\coloneqq\frac1{k+1}\binom{2k}k$ is the $k$th Catalan number for $k\ge1$, and $\cC_0\coloneqq1$.
\end{lem}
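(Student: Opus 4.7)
The plan is to apply the \textbf{moment method}. First, write
\[
\E\langle L_{H_n}, x^k\rangle = \frac{1}{N_{n,d}}\E\big[\Tr(H_n^k)\big], \qquad N_{n,d}\coloneqq\binom{n}{d},
\]
and expand the trace as a sum over closed walks $\sigma_0 \to \sigma_1 \to \cdots \to \sigma_{k-1} \to \sigma_0$ of length $k$ in the graph whose vertices are the $(d-1)$-simplices on $[n]$ and whose edges join pairs spanning a common $d$-coface. For adjacent $\sigma_i,\sigma_{i+1}$ the matrix entry factors as $\epsilon_{\sigma_i,\sigma_{i+1}}(X_{\tau_i}-p)$, where $\tau_i \coloneqq \sigma_i \cup \sigma_{i+1}$, $X_{\tau_i}\sim\Ber(p)$ is the indicator of $\tau_i \in Y^d_{n,p}$, and $\epsilon \in \{\pm1\}$ is the orientation sign. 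Since $\{X_\tau\}_\tau$ are independent across distinct $d$-simplices, the expectation factorizes over the multiset of $\tau_i$'s and vanishes unless every distinct $\tau$ appears at least twice.

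Next I would isolate the leading walks by power counting. A walk using $s$ distinct $d$-simplices with multiplicities $m_1,\dots,m_s \ge 2$ and touching $V$ distinct vertices of $[n]$ contributes, up to a bounded combinatorial constant, of order
\[
n^{V-d}\,(np(1-p))^{s-k/2}
\]
after dividing by $N_{n,d}$. The bounds $V \le d+s$ (consecutive $d$-simplices share $d$ vertices, so each new $\tau$ introduces at most one new vertex) and $s \le k/2$ force every contribution to be $O(1)$, with order exactly one only when $V=d+s$, $s=k/2$, and each $m_j=2$. Odd $k$ then forces $s<k/2$, giving a factor $(np(1-p))^{-1/2}\to 0$ which kills the entire contribution and establishes the odd case.

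For even $k$ it remains to count the leading walks. These are tree-like: each of the $k/2$ ``outgoing'' steps opens a new $d$-simplex by replacing one of the $d$ vertices of the current $(d-1)$-simplex (a factor of $d$) and introducing a fresh vertex from $[n]$ ($\sim n$ choices), while each ``returning'' step closes the most recently opened $d$-simplex. The resulting open/close matching must be non-crossing, and non-crossing pair partitions of $\{1,\dots,k\}$ are counted by $\cC_{k/2}$. Multiplying the $d^{k/2}$ inner choices by the $n^{k/2}$ fresh-vertex choices and dividing by $N_{n,d}\sim n^d/d!$ yields exactly $d^{k/2}\cC_{k/2}$ in the limit. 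Symmetry of $A_{d-1}$ (which follows from $A_{d-1}=D_{d-1}-L^{\mathrm{up}}_{d-1}$, the upper Laplacian being self-adjoint) then makes the two sign factors of each $\tau$ coincide along a leading walk, so the orientation signs trivialize.

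The main obstacle I anticipate is the classification step: rigorously identifying the leading walks as exactly the non-crossing DFS walks. One must rule out walks whose two visits to some $\tau$ use different unordered pairs of $(d-1)$-faces, showing either that such configurations force $V < d+s$ or that they correspond to crossing pair partitions which collapse the walk (as already illustrated for $k=4$, where a crossing pairing forces all four $\sigma_i$'s to coincide, contradicting $\sigma_i\ne\sigma_{i+1}$). Once this classification is in hand, the independence-based factorization, the power counting, and the final non-crossing enumeration are fairly mechanical.
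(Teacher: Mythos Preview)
Your approach is essentially the same as the paper's (which follows Knowles--Rosenthal): expand the trace over closed $(n,d)$-words, group by the isomorphism type of the support, show that only words whose support complex is a $d$-tree with each $d$-simplex visited exactly twice survive, and count these via the bijection with $\{1,\ldots,d\}$-labeled rooted planar trees (Lemma~\ref{lem:KR17_Lem3.11} here).

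There is, however, an arithmetic slip in your power counting. The contribution of an equivalence class with $V$ vertices and $s$ distinct $d$-simplices is of order
\[
n^{V-d-s}\,\{np(1-p)\}^{s-k/2},
\]
not $n^{V-d}\{np(1-p)\}^{s-k/2}$: the factor $\{p(1-p)\}^s$ from the centered moments equals $n^{-s}\{np(1-p)\}^s$, and the $n^{-s}$ is missing from your display. With your formula as written, the ``leading'' case $V=d+s$, $s=k/2$ would contribute $n^{k/2}$, not $O(1)$, contradicting your own next sentence. With the corrected exponent, both factors are at most~$1$ (using $V\le d+s$ from Lemma~\ref{lem:f0-fd} and $2s\le k$), and your conclusions go through unchanged.

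The gap you flag---that the leading walks are exactly the DFS-type traversals in which each $d$-simplex is crossed twice through the \emph{same} pair of $(d-1)$-faces---is precisely the content of Lemma~\ref{lem:KR17_Lem3.11}(1)--(3). That lemma also yields $\sgn(w,\tau)=1$ directly; your symmetry remark is correct but only \emph{after} one knows the two traversals of $\tau$ use the same unordered face pair, which is again part of that classification and not a consequence of self-adjointness alone.
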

\begin{lem}[{\cite[Lemma~3.3]{KR17}}]\label{lem:KR17_Lem3.3}
If $\lim_{n\to\infty}np(1-p)=\infty$, then for any $k\in\Z_{\ge0}$,
\[
\limsup_{n\to\infty}n^d\{np(1-p)\}\cdot\Var(\langle L_{H_n},x^k\rangle)<\infty.
\]
\end{lem}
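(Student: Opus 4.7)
My plan is the moment method: expand $\Var(\Tr(H_n^k))$ as a double sum over pairs of closed walks on $(d-1)$-simplices and estimate each contribution. Writing $\Tr(H_n^k) = \sum_{\sigma_0,\ldots,\sigma_{k-1}} \prod_{i=0}^{k-1} H_n(\sigma_i,\sigma_{i+1})$ with indices mod $k$, each entry $H_n(\sigma,\sigma')$ equals $\pm(X_\tau - p)/\sqrt{np(1-p)}$ whenever $\sigma$ and $\sigma'$ share a $(d-2)$-face and span the $d$-simplex $\tau=\sigma\cup\sigma'$, and is zero otherwise; here $X_\tau\coloneqq\1\{\tau\in Y^d_{n,p}\}$, and the $X_\tau$ are independent Bernoulli$(p)$ variables. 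Nonzero summands therefore correspond to closed walks $\mathbf{s}=(\sigma_0,\tau_0,\sigma_1,\tau_1,\ldots,\tau_{k-1},\sigma_0)$ of length $k$ on the set of $(d-1)$-simplices, with each $\tau_i$ a $d$-simplex.

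Expanding $\Var(\Tr(H_n^k))$ yields a sum of covariances over pairs $(\mathbf{s},\mathbf{s}')$ of such walks. Two observations collapse this sum. First, independence of the $X_\tau$ over distinct $d$-simplices kills the covariance whenever the multisets of $d$-simplices visited by $\mathbf{s}$ and $\mathbf{s}'$ are disjoint. Second, since $\E[X_\tau-p]=0$, any $d$-simplex appearing with combined multiplicity exactly one across both walks forces the joint expectation to vanish. Hence it suffices to sum over pairs in which at least one $d$-simplex is shared and every $d$-simplex appears with combined multiplicity at least $2$.

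For such a pair, let $V$ be the number of distinct vertices of $[n]$ used and $E_*$ the number of distinct $d$-simplices. Using $|\E[(X_\tau-p)^m]|\lesssim p(1-p)$ for $m\ge 2$, the covariance is at most $O((p(1-p))^{E_*}/(np(1-p))^k)$ with the constant depending only on $k$. The number of pairs of walks of a given combinatorial type is $\lesssim n^V$, so each type contributes $O(n^{V-k}(p(1-p))^{E_*-k})$ to $\Var(\Tr(H_n^k))$. A case analysis of shapes, analogous to the enumeration behind Lemma~\ref{lem:KR17_Lem3.2}, shows the maximum is attained by the \emph{paired-tree} shape: both walks are tree-like with $k/2$ distinct $d$-simplices each (each used exactly twice), and they meet in exactly one $d$-simplex. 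For this shape, $V=d-1+k$ and $E_*=k-1$, producing a contribution of order $n^{d-1}/(p(1-p))$. Any additional shared $d$-simplex, or any cycle in the combined walk, decreases $V$ by at least as much as it decreases $k-E_*$, and the extra factor $1/(np(1-p))\to 0$ renders that shape negligible.

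Combining, $\Var(\Tr(H_n^k))=O(n^{d-1}/(p(1-p)))$, and dividing by $N^2=\binom{n}{d}^2\sim (n^d/d!)^2$ gives $\Var(\langle L_{H_n},x^k\rangle)=O(1/(n^{d+1}p(1-p)))$, which multiplied by $n^d\cdot np(1-p)$ is bounded uniformly in $n$. The main obstacle is the shape classification: making an Euler-type inequality for the combined walk precise enough to rule out all non-leading configurations, even in the sparse regime where $p(1-p)$ can vanish. This requires careful bookkeeping of how cycles and additional overlaps reduce the vertex count $V$ at a rate at least matching the change in $E_*$, a combinatorial input specific to the higher-dimensional setting that has no counterpart in the classical Wigner analysis.
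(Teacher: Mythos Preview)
Your approach is the same as the paper's: this lemma is the case $k=l$ of Theorem~\ref{thm:multi_CLT}(1), proved in Section~\ref{sec:cov_moments} by exactly the expansion over pairs of closed $(n,d)$-words that you describe. The gap you flag as ``the main obstacle'' is precisely the content of Lemma~\ref{lem:cW_ks2}(1), and the argument is short once you have the right Euler-type inequality. That inequality is Lemma~\ref{lem:f0-fd}: for a strongly connected pure $d$-dimensional complex $X$, one has $f_0(X)\le f_d(X)+d$, with equality if and only if $X$ is a $d$-tree. Applied to the support complex $X_a$ of the pair of walks this gives $V\le E_*+d\le k+d$. The extra $-1$ comes from the sharing constraint: if $V=k+d$ then both inequalities are tight, so $X_a$ is a $d$-tree and $E_*=k$. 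On a $d$-tree every closed walk traverses each of its $d$-simplices at least twice, hence $N_{w^j}(\tau)\ge 2$ for $\tau\in\Supp_d(w^j)$; but $\sum_\tau N_a(\tau)=2k$ over $k$ simplices forces $N_a(\tau)=2$ for all $\tau$, so a shared $\tau$ would need $N_a(\tau)\ge 2+2=4$, a contradiction. Thus $V\le k+d-1$, and your computation completes the proof. (Contrary to your last sentence, this step has a direct classical counterpart: for $d=1$ it is the standard fact that two closed walks on a tree sharing an edge cannot both traverse every edge exactly twice.)

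One small correction to your shape analysis: the paired-tree configuration (two $d$-trees glued along a single $d$-simplex, $E_*=k-1$) is not the only extremal shape. There is also a ``bracelet'' configuration with $E_*=k$ and $X_a$ not a $d$-tree (so $V=E_*+d-1=k+d-1$ as well); see Lemmas~\ref{lem:cW_ks2-} and~\ref{lem:cW_ks2+}. Both contribute at order $O(1)$ after the normalization $n^d\{np(1-p)\}$, so this does not affect the limsup statement, but it does matter for the exact limiting variance.
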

For the proof of Lemma~\ref{lem:KR17_Lem3.2}, note first that $\binom nd\langle L_{H_n},x^k\rangle=\Tr(H_n^k)$.
One can expand the trace in terms of the entries of $H_n$ and examine the terms that contribute to the first order.
Knowles and Rosenthal~\cite{KR17} showed that the main contributions to the first order are associated with rooted trees whose edges are labeled by $\{1,2,\ldots,d\}$, explaining why Theorem~\ref{lem:KR17_Lem3.2} involves the Catalan number (see Section~\ref{sec:E_moments} for a more detailed explanation).
As we will see later, Lemma~\ref{lem:KR17_Lem3.3} is a special case of our first main result~(Theorem~\ref{thm:multi_CLT}(1)).

We also remark on several studies on the spectrum of the adjacency matrix and Laplacian of the Linial--Meshulam complex model.
In~\cite{GW16}, Gundert and Wagner proved that, under the assumption $\lim_{n\to\infty}np(1-p)/\log n=\infty$, the eigenvalues of the adjacency operator $A_{d-1}(Y^d_{n,p})$ are asymptotically almost surely confined to two separate intervals $\sqrt{np(1-p)}[-C,C]$ and $np+\sqrt{np(1-p)}[-C,C]$ for some constant $C>0$.
Knowles and Rosenthal~\cite{KR17} improved the size of the intervals to $\sqrt{np(1-p)}[-2\sqrt d-\eps,2\sqrt d+\eps]$ (for any $\eps>0$) and $np+[-7d,7d]$ at the cost of the stronger assumption $\lim_{n\to\infty}np(1-p)/(\log n)^4=\infty$~(see also~\cite[Theorem~2.4]{LR22} for a stronger result).
Furthermore, Leibzirer and Rosenthal~\cite{LR22} studied sparse random matrices as generalizations of the centered and scaled adjacency matrix $H_{d-1}(Y^d_{n,p})$, which is obtained by replacing the Bernoulli distribution with parameter $p$ used to construct the adjacency matrix $A_{d-1}(Y^d_{n,p})$ with any bounded distribution.
Fountoulakis and Przykucki~\cite{FP22} proved a concentration result on the spectral gap of the combinatorial up Laplacian $L_k^{\up}(Y^d_{n,p})$ when $np=(1+\eps)d\log n$ for any $\eps>0$.
When $np$ is of order one, 
Adhikari, Kumar, and Saha~\cite{AKS23} showed the existence of the limiting nonrandom measure of the empirical spectral distribution of $A_{d-1}(Y^d_{n,p})$ (they also considered the unsigned adjacency matrices).
The existence of the limiting nonrandom measure of the empirical spectral distribution of the combinatorial up Laplacian for much larger class of random simplicial complexes was proved in~\cite[Theorem~24]{Ka22}.

\subsection{Main results}
The objective in this article is to investigate Gaussian fluctuations around the limit.
For the main results, recalling that $p=p(n)$ is a function of $n$, we assume that the limit $p_\infty\coloneqq\lim_{n\to\infty}p$ exists in $[0,1]$.
Furthermore, for any $k,l\in\Z_{\ge0}$, we define $\sg(k,l)\in[0,\infty)$ as follows: if $k+l$ is even, then
\begin{align*}
\sg(k,l)
&\coloneqq\1_{\{k,l\colon\text{even}\}}(d+1)!\{(k/2)d^{k/2}\cC_{k/2}\}\{(l/2)d^{l/2}\cC_{l/2}\}(2p_\infty-1)^2\\
&\qad+\sum_{r=3}^{k\wedge l}\frac2r\Biggl(d!\,k\!\sum_{\substack{k_1,k_2,\ldots,k_r\in2\Z_{\ge0}\\k_1+\cdots+k_r=k-r}}\prod_{q=1}^rd^{k_q/2}\cC_{k_q/2}\Biggr)\Biggl(d!\,l\!\sum_{\substack{l_1,l_2,\ldots,l_r\in2\Z_{\ge0}\\l_1+\cdots+l_r=l-r}}\prod_{q=1}^rd^{l_q/2}\cC_{l_q/2}\Biggr)p_\infty(1-p_\infty),
\end{align*}
and $\sg(k,l)\coloneqq0$ otherwise.
\begin{rem}
One can easily verify that $\sg(k,l)>0$ if and only if one of the following holds:
\begin{itemize}
\item $k\wedge l=2$, both $k$ and $l$ are even, and $p_\infty\neq1/2$;
\item $k\wedge l\ge3$ and both $k$ and $l$ are even;
\item $k\wedge l\ge3$, both $k$ and $l$ are odd, and $p_\infty\notin\{0,1\}$.
\end{itemize}
\end{rem}
The following is our first main result.
Recall that $\xrightarrow[n\to\infty]{d}$ denotes the convergence in distribution as $n\to\infty$.
\begin{thm}[Multivariate CLT]\label{thm:multi_CLT}
Suppose that $\lim_{n\to\infty}np(1-p)=\infty$, then the following statements hold.
\begin{enumerate}
\item For any $k,l\in\Z_{\ge0}$,
\[
\lim_{n\to\infty}n^d\{np(1-p)\}\cdot\cov(\langle L_{H_n},x^k\rangle,\langle L_{H_n},x^l\rangle)
=\sg(k,l).
\]
\item Let $K\in\Z_{\ge0}$.
Then, the matrix $\Sg_K\coloneqq\{\sg(k,l)\}_{0\le k,l\le K}$ is symmetric and positive semidefinite, and it holds that
\[
\Bigl\{\sqrt{n^d\{np(1-p)\}}\cdot(\langle L_{H_n},x^k\rangle-\E\langle L_{H_n},x^k\rangle)\Bigr\}_{k=0}^K
\xrightarrow[n\to\infty]{d}\cN(0,\Sg_K).
\]
Here, $\cN(0,\Sg_K)$ denotes the multivariate Gaussian distribution with zero mean vector and covariance matrix $\Sg_K$.
\end{enumerate}
\end{thm}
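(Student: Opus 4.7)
The plan is to apply the method of moments to the trace identity $\binom{n}{d}\langle L_{H_n},x^k\rangle = \Tr(H_n^k)$, so that both parts of the theorem reduce to statements about the joint asymptotics of centered traces. Because the $d$-simplices of $Y^d_{n,p}$ are independent Bernoulli random variables, all relevant joint moments factor through the independent centered variables $\tilde{Y}_\tau \coloneqq \1_{\{\tau\in Y^d_{n,p}\}} - p$ indexed by the $d$-simplices $\tau\subset[n]$. Each $\tilde Y_\tau$ nevertheless contributes to $\binom{d+1}{2}$ off-diagonal entries of $H_n$, so the dependency structure of the trace expansion is substantially richer than in the classical Wigner setting.

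For part~(1), I would write
\[
\cov(\Tr(H_n^k),\Tr(H_n^l)) = \sum_{W,W'}\bigl(\E[\xi_W\xi_{W'}] - \E[\xi_W]\E[\xi_{W'}]\bigr),
\]
where $W,W'$ range over closed walks of lengths $k,l$ on the $(d-1)$-simplices of $\cK_n$ (each transition requiring a shared $d$-simplex) and $\xi_W$ is the corresponding product of entries of $H_n$. After expanding each entry in the $\tilde Y_\tau$, the covariance is zero unless $W$ and $W'$ share at least one $d$-simplex. Building on the rooted-tree enumeration underlying Lemma~\ref{lem:KR17_Lem3.2}, I would classify the configurations achieving the leading order $n^{-d}\{np(1-p)\}^{-1}$ into two families: (a) pairs $(W,W')$ sharing $d$-simplices whose total multiplicities produce third-moment factors $\E[\tilde Y_\tau^3] = p(1-p)(1-2p)$, yielding the $(d+1)!(k/2)(l/2)d^{(k+l)/2}\cC_{k/2}\cC_{l/2}(2p_\infty-1)^2$ term of $\sg(k,l)$; and (b) pairs traversing a common $d$-simplex with matched multiplicity $r\ge 3$ on each side, contributing one covariance factor $p_\infty(1-p_\infty)$ together with $r$ independent pendant rooted subtrees on each side whose enumeration supplies the sums $\sum_{k_1+\cdots+k_r=k-r}\prod_q d^{k_q/2}\cC_{k_q/2}$. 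All other overlap topologies must be shown to be of strictly smaller order.

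For part~(2), I would invoke the Cram\'er--Wold device: for any $\mathbf{c} = (c_0,\dots,c_K)\in\R^{K+1}$, setting
\[
S_n \coloneqq \sqrt{n^d\{np(1-p)\}}\sum_{k=0}^K c_k\bigl(\langle L_{H_n},x^k\rangle - \E\langle L_{H_n},x^k\rangle\bigr), \qquad v\coloneqq\mathbf{c}^T\Sg_K\mathbf{c},
\]
it suffices to establish $S_n\xrightarrow[n\to\infty]{d}\cN(0,v)$, which I would prove by the method of moments. Part~(1) gives $\E[S_n^2]\to v$, while for $m\ge 3$ one needs $\E[S_n^m]\to 0$ when $m$ is odd and $\E[S_n^m]\to(m-1)!!\,v^{m/2}$ when $m$ is even, i.e., the Wick/Isserlis formula. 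Expanding $\E[S_n^m]$ as a sum over $m$-tuples of closed walks, I would show that only pair partitions contribute in the limit: any configuration in which three or more walks share a common $d$-simplex corresponds to a higher joint cumulant of the $\tilde Y_\tau$ and incurs an additional combinatorial constraint (fewer distinct vertices per distinct $d$-simplex used) that beats the normalization $(n^d\{np(1-p)\})^{-m/2}$. Positive semidefiniteness of $\Sg_K$ then follows automatically from its identification as a limiting covariance.

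The principal technical obstacle is the combinatorial bookkeeping in both parts. Unlike the i.i.d.\ Wigner case, a single shared $d$-simplex couples $\binom{d+1}{2}$ entries of $H_n$ simultaneously, so one must track which $(d-1)$-faces of each shared $d$-simplex are visited by each walk, and balance this against the vertex count governing the $n$-scaling. Classifying the extremal overlap topologies in part~(1), and proving the strict suboptimality of non-pair overlaps in part~(2), constitutes the technical heart of the argument; concentration properties for random symmetric matrices can be used as a secondary tool to bound aggregate contributions from topologies that are hard to enumerate individually.
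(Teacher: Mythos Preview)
Your overall architecture is right and matches the paper: expand the covariance over pairs of closed $(n,d)$-words, isolate the leading vertex-count, and for part~(2) show via the method of moments that only pair partitions of the $m$ walks survive. However, your identification of the two extremal families in part~(1) is incorrect, and this is a genuine gap.

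For family~(a), the $(2p_\infty-1)^2$ term does \emph{not} come from third moments. In the paper's classification (the set $\cW_{k,l,(k+l)/2+d-1}^{(2),-}$), each of $w^1,w^2$ is supported on a $d$-tree and they share exactly one $d$-simplex $\tau$, which is traversed twice by \emph{each} walk. The relevant factor is thus a fourth moment $\E[(\chi_p-p)^4]$, and the contribution is
\[
\E[(\chi_p-p)^4]\{p(1-p)\}^{(k+l)/2-2}-\{p(1-p)\}^{(k+l)/2}=\{p(1-p)\}^{(k+l)/2-1}(2p-1)^2,
\]
not a product of two third-moment factors $p(1-p)(1-2p)$.

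For family~(b), the structure is not ``a common $d$-simplex with matched multiplicity $r$'' but a \emph{bracelet}: a cycle of $r\ge3$ distinct $d$-simplices $\rho\cup\{u_1,u_2\},\ldots,\rho\cup\{u_r,u_1\}$ sharing a common $(d-2)$-face $\rho$, each traversed exactly once by $w^1$ and once by $w^2$, with regular pendant $d$-trees attached at the $(d-1)$-simplices $\rho\cup\{u_q\}$. This is the set $\cW_{k,l,(k+l)/2+d-1}^{(2),+}$ in the paper; since each walk hits every bracelet simplex once, $\bar T_n(w^1)\bar T_n(w^2)=0$ and the single factor $p_\infty(1-p_\infty)$ arises from the normalization, not from there being one shared simplex. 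The factor $2/r$ and the decomposition $k_1+\cdots+k_r=k-r$ reflect the $r$-fold rotational symmetry of the bracelet and the $r$ attachment points for pendant trees. Proving that the extremal support complex is precisely a bracelet with regular pendant $d$-trees (Lemma~\ref{lem:bracelet} in the paper) is the technical heart of part~(1), and your proposal does not locate this object.

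For part~(2), your plan is essentially the paper's; the missing ingredient is the analogue of the vertex bound $s\le\lfloor(|\mathbf{k}|-h)/2\rfloor+d\cdot\bar c(X_a)$ together with $\bar c(X_a)\le\lfloor h/2\rfloor$, which the paper establishes via a $d$-forest reduction and a ``$d$-simplex-bounding table'' argument. Concentration inequalities are not used anywhere in the proof of this theorem.
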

%
%
The following is an immediate corollary of Theorem~\ref{thm:multi_CLT}.
\begin{cor}[Polynomial-type CLT]\label{cor:poly-type_CLT}
If $\lim_{n\to\infty}np(1-p)=\infty$, then for any real-valued polynomial function $f(x)=\sum_{k=0}^Ka_kx^k$,
\[
\lim_{n\to\infty}n^d\{np(1-p)\}\cdot\Var(\langle L_{H_n},f\rangle)
=\sum_{k,l=0}^Ka_k\sg(k,l)a_l
\]
and it holds that
\[
\sqrt{n^d\{np(1-p)\}}\cdot(\langle L_{H_n},f\rangle-\E\langle L_{H_n},f\rangle)
\xrightarrow[n\to\infty]{d}\cN\biggl(0,\sum_{k,l=0}^Ka_k\sg(k,l)a_l\biggr).
\]
\end{cor}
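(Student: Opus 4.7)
The plan is to deduce both assertions directly from Theorem~\ref{thm:multi_CLT} by exploiting the linearity of $f\mapsto\langle L_{H_n},f\rangle$, together with the bilinearity of covariance and the continuous mapping theorem. Since no new probabilistic input is required beyond what is already encoded in Theorem~\ref{thm:multi_CLT}, the proof is essentially bookkeeping; the only subtlety is to make sure that the asymptotic covariance structure is exactly captured by the quadratic form $\sum_{k,l}a_k\sg(k,l)a_l$.

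First I would treat the variance statement. Writing $f(x)=\sum_{k=0}^K a_k x^k$ and using linearity of the pairing $\langle L_{H_n},\cdot\rangle$, one has
\[
\langle L_{H_n},f\rangle-\E\langle L_{H_n},f\rangle
=\sum_{k=0}^K a_k\bigl(\langle L_{H_n},x^k\rangle-\E\langle L_{H_n},x^k\rangle\bigr).
\]
By the bilinearity of covariance,
\[
\Var(\langle L_{H_n},f\rangle)
=\sum_{k,l=0}^K a_k a_l\cdot\cov\bigl(\langle L_{H_n},x^k\rangle,\langle L_{H_n},x^l\rangle\bigr).
\]
Multiplying both sides by $n^d\{np(1-p)\}$, using the finiteness of the sum to interchange limit and summation, and applying Theorem~\ref{thm:multi_CLT}(1) to each entry yields the claimed limit.

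Next I would deduce the CLT. Setting
\[
V_n\coloneqq\sqrt{n^d\{np(1-p)\}}\cdot\bigl(\langle L_{H_n},x^k\rangle-\E\langle L_{H_n},x^k\rangle\bigr)_{k=0}^K\in\R^{K+1},
\]
Theorem~\ref{thm:multi_CLT}(2) asserts $V_n\xrightarrow[n\to\infty]{d}Z$ with $Z\sim\cN(0,\Sg_K)$. With $\mathbf{a}\coloneqq(a_0,\ldots,a_K)^\top$, linearity gives
\[
\sqrt{n^d\{np(1-p)\}}\cdot(\langle L_{H_n},f\rangle-\E\langle L_{H_n},f\rangle)
=\mathbf{a}^\top V_n.
\]
Since $v\mapsto\mathbf{a}^\top v$ is continuous on $\R^{K+1}$, the continuous mapping theorem yields $\mathbf{a}^\top V_n\xrightarrow[n\to\infty]{d}\mathbf{a}^\top Z$. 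A linear combination of a Gaussian vector is itself Gaussian, so $\mathbf{a}^\top Z\sim\cN(0,\mathbf{a}^\top\Sg_K\mathbf{a})$, and $\mathbf{a}^\top\Sg_K\mathbf{a}=\sum_{k,l=0}^K a_k\sg(k,l)a_l$, which completes the argument.

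There is essentially no hard step in this derivation; the ``main obstacle'' has already been overcome in proving Theorem~\ref{thm:multi_CLT}. The only point worth being careful about is ensuring that the quadratic form $\sum_{k,l}a_k\sg(k,l)a_l$ is nonnegative, which is automatic from the positive semidefiniteness of $\Sg_K$ asserted in Theorem~\ref{thm:multi_CLT}(2), so the right-hand side $\cN(0,\sum_{k,l}a_k\sg(k,l)a_l)$ is a well-defined (possibly degenerate) Gaussian law.
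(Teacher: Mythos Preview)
Your proposal is correct and follows essentially the same approach as the paper: the paper also derives the variance limit by bilinearity of covariance together with Theorem~\ref{thm:multi_CLT}(1), and obtains the CLT by taking the linear combination $\sum_k a_k W_k$ of the limiting Gaussian vector from Theorem~\ref{thm:multi_CLT}(2), which is exactly your continuous mapping argument.
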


Next, we lift a polynomial-type CLT to a CLT for a larger class of test functions.
A function $g\colon\R\to\R$ is said to be \textit{of polynomial growth} if there exists a constant $M\ge0$ such that $\lim_{x\to\pm\infty}|g(x)|/|x|^M=0$.
Furthermore, we say that a function $f\colon\R\to\R$ is \textit{of class $C^m$ on a closed interval $[a,b]$} if there exists $\eps>0$ such that $f$ is of class $C^m$ on $(a-\eps,b+\eps)$.
The following is our main theorem, which is a CLT for a test functions $f\colon\R\to\R$ of polynomial growth that is of class $C^2$ on the support of the semicircle distribution $\nu_d$.
\begin{thm}\label{thm:CLT_diff}
If $\lim_{n\to\infty}np(1-p)/(\log n)^4=\infty$, then for any function $f\colon\R\to\R$ of polynomial growth that is of class $C^2$ on $[-2\sqrt d,2\sqrt d]$, there exists a constant $\sg^2\ge0$ such that
\[
\lim_{n\to\infty}n^d\{np(1-p)\}\cdot\Var(\langle L_{H_n},f\rangle)
=\sg^2
\]
and it holds that
\[
\sqrt{n^d\{np(1-p)\}}\cdot(\langle L_{H_n},f\rangle-\E\langle L_{H_n},f\rangle)
\xrightarrow[n\to\infty]{d}\cN(0,\sg^2).
\]
\end{thm}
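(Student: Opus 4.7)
The plan is to reduce to the polynomial-type CLT of Corollary~\ref{cor:poly-type_CLT} via polynomial approximation, combined with a uniform variance bound for linear eigenvalue statistics of smooth test functions and an eigenvalue-rigidity estimate. Fix $\eps \in (0, 1)$ and set $I_\eps := [-2\sqrt d - \eps, 2\sqrt d + \eps]$. The strengthened hypothesis $np(1-p)/(\log n)^4 \to \infty$ is calibrated to invoke the eigenvalue localization results of~\cite{KR17,LR22}: with probability $1 - o(n^{-M})$ for every $M > 0$, every eigenvalue of $H_n$ lies in $I_\eps$, after the centering by $\E[A_{d-1}]$ has absorbed the outlier associated with the top eigenvalue of $A_{d-1}$. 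Denote this rigidity event by $\mathcal E_n$.

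For every $\delta > 0$, Weierstrass approximation applied to $f''$ (followed by two integrations) produces a polynomial $P_\delta$ with $\|f - P_\delta\|_{C^2(I_\eps)} \le \delta$. Setting $g_\delta := f - P_\delta$ and extending $g_\delta$ outside $I_\eps$ so as to retain polynomial growth without enlarging $\|g_\delta\|_{C^2(I_\eps)}$, split
\[
\sqrt{n^d np(1-p)}\bigl(\langle L_{H_n}, f\rangle - \E\langle L_{H_n}, f\rangle\bigr) = X_n^{(\delta)} + R_n^{(\delta)},
\]
where $X_n^{(\delta)}$ is the rescaled centered linear statistic of $P_\delta$ and $R_n^{(\delta)}$ that of $g_\delta$. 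Corollary~\ref{cor:poly-type_CLT} gives $X_n^{(\delta)} \xrightarrow[n\to\infty]{d} \cN(0, \sg_\delta^2)$, where $\sg_\delta^2$ is the explicit quadratic form in the coefficients of $P_\delta$. It then suffices to establish a uniform-in-$n$ Sobolev-type variance estimate
\[
\limsup_{n\to\infty} n^d np(1-p)\cdot\Var(\langle L_{H_n}, g\rangle) \le C\,\|g\|_{C^2(I_\eps)}^2
\]
for every $C^2$ function $g$ of polynomial growth. Applied to $g = g_\delta$ this gives $\Var(R_n^{(\delta)}) \le C\delta^2 + o(1)$; applied to $P_\delta - P_{\delta'}$ it shows $(\sg_\delta^2)_\delta$ is Cauchy with some limit $\sg^2$. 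A standard three-$\eps$ argument then yields both the CLT and the variance identity.

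The main obstacle is this Sobolev-type variance bound for general smooth $g$, beyond the monomials $x^k$ already treated by Lemma~\ref{lem:KR17_Lem3.3} and part~(1) of Theorem~\ref{thm:multi_CLT}. A delicate point is that each Bernoulli variable $\xi_\tau$ attached to a $d$-simplex $\tau \in \cK_n^{(d)}$ simultaneously affects $\binom{d+1}{2}$ above-diagonal entries of $H_n$, so bounded-difference or direct Efron--Stein arguments turn out to be off by a factor of $np(1-p)$ and cannot close the estimate. A natural remedy is to pass through a Helffer--Sj\"ostrand representation
\[
g(\lm) = \frac1\pi\int_{\C}\overline{\partial}\tilde g(z)\,(\lm - z)^{-1}\,dx\,dy
\]
with a compactly supported almost-analytic extension $\tilde g$ of $g$ localized near $I_\eps$, reducing the problem to a uniform variance bound for the resolvent traces $\Tr(H_n - z)^{-1}$ in the spectral parameter $z$, weighted by $|\overline\partial\tilde g(z)|$; the latter being small thanks to the $C^2$-regularity of $g$. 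The variance of the resolvent trace can then be tackled by the combinatorial moment-method arguments developed in \cite{KR17} and in the proof of part~(1) of Theorem~\ref{thm:multi_CLT}, now carried out uniformly in $z$. The contribution on $\mathcal E_n^c$ is controlled by the polynomial growth of $g$ combined with the crude bound $\|H_n\|^2 \le \Tr(H_n^2)$ (whose expectation is $O(1)$ by Lemma~\ref{lem:KR17_Lem3.2}) and the superpolynomial decay of $\P(\mathcal E_n^c)$.
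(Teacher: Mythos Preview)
Your overall strategy---polynomial approximation plus a uniform variance bound for the remainder---matches the paper's, but the mechanism you propose for the variance bound is both different from and substantially more difficult than what the paper actually does, and the hardest step is not carried out.

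The paper's key idea is this: you do not need a variance bound valid for \emph{all} $C^2$ functions $g$. You only need it for the specific remainder $g_\delta=f-P_\delta$, and you are free to choose $P_\delta$. By approximating $f''$ from \emph{below} by polynomials (Lemma~\ref{lem:SW}) and integrating twice, one produces polynomials $P_m$ with $\|f'-P_m'\|_{I_\eps}\to0$ for which $f-P_m$ is \emph{convex} on $I_\eps$ (Lemma~\ref{lem:poly_approx}). After a linear extension outside $I_\eps$, the remainder becomes globally convex and Lipschitz with constant $\|f'-P_m'\|_{I_\eps}$. For such functions Talagrand's concentration inequality applies directly to the independent Bernoulli variables $\{b_{n,\tau}\}_{\tau\in F_d(\cK_n)}$, yielding (Lemma~\ref{lem:cons_condi1})
\[
n^d\{np(1-p)\}\cdot\Var(\langle L_{H_n},g\rangle)\le c\,\Lip(g)^2
\]
with no loss from the $\binom{d+1}{2}$-fold dependence you worried about: the map from Bernoulli variables to $H_n$ is linear with Lipschitz constant $\sqrt{(d+1)d/(np(1-p))}$, which exactly produces the correct scaling. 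Condition~(2) (eigenvalue localization) is then handled by a direct moment argument with $k=k(n)\to\infty$ using the combinatorial estimate of Proposition~\ref{prop:KR17_prop5.3}, rather than via a high-probability rigidity event.

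Your route via Helffer--Sj\"ostrand would instead require a bound of the form $\Var\bigl(\Tr(H_n-z)^{-1}\bigr)\lesssim n^{-d}\{np(1-p)\}^{-1}|\Im z|^{-\alpha}$ uniformly for $z$ near the spectrum, with $\alpha$ small enough that the factor $|\bar\partial\tilde g(z)|\lesssim|\Im z|$ (coming from $C^2$-regularity) integrates it. You assert this ``can be tackled by the combinatorial moment-method arguments'' of~\cite{KR17} and Theorem~\ref{thm:multi_CLT}(1), but those arguments treat fixed polynomials, not resolvents, and extracting the required uniformity as $\Im z\downarrow0$ is essentially a local law for this model---nontrivial, and not available in the paper. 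This is the genuine gap: the step that carries the whole weight of your argument is only named, not established. The convexity trick plus Talagrand bypasses it entirely.
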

It is unknown whether we can relax the smoothness assumption on the test function $f$ in the above theorem.
Furthermore, the explicit formula for the variance $\sg^2$ is also an open problem.

The remainder of this article is organized as follows.
In Section~\ref{sec:preliminaries}, we define some important notions, namely pure dimensionality, strong connectedness, and $d$-tree, which repeatedly appear in this article.
We also review the definition of the adjacency matrix for simplicial complexes, introduced in~\cite{GW12}.
Section~\ref{sec:E_moments} presents a brief summary of the proof of Lemma~\ref{lem:KR17_Lem3.2} since the proof includes many important concepts and calculations that are also crucial in the subsequent sections.
The starting point of the proof of Lemma~\ref{lem:KR17_Lem3.2} is to expand $\E\langle L_{H_n},x^k\rangle$ by closed $(n,d)$-words, appropriate walks on $(d-1)$-simplices in $\cK_n$.
In this article, we however introduce a different definition of $(n,d)$-words (Definition~\ref{df:nd_word}), which turns out to be critical for counting specific types of $(n,d)$-sentences, tuples of $(n,d)$-words, in Section~\ref{sec:cov_moments}.
See also Remark~\ref{rem:nd_word} for other definitions of $(n,d)$-words given in~\cite{AKS23,KR17,LR22}.
In Section~\ref{sec:cov_moments}, we prove Theorem~\ref{thm:multi_CLT}(1).
We consider $(n,d)$-sentences to expand $\cov(\langle L_{H_n},x^k\rangle,\langle L_{H_n},x^l\rangle)$.
In order to determine the coefficients of the first order of the expansion, we introduce the notions of bracelets and bracelets with pendant $d$-trees for simplicial complexes (see Definition~\ref{df:bracelet}).
In Section~\ref{sec:multivariate_CLT}, we prove Theorem~\ref{thm:multi_CLT}(2).
By Carleman's theorem, it is important to examine the first order of the expansion of the form $\E\bigl[\prod_{j=1}^h(\langle L_{H_n},x^{k_j}\rangle-\E\langle L_{H_n},x^{k_j}\rangle)\bigr]$ in terms of $(n,d)$-sentences.
In order to do that, we need to reveal the relationship between the number of vertices in the support complex of a $(n,d)$-sentence and the number of its strongly connected components (see Lemma~\ref{lem:cW_ksh}).
To this end, we introduce the notion of $d$-simplex-bounding tables, which is a higher-dimensional analog of the edge-bounding table.
In Section~\ref{sec:CLT_diff_test_funct}, we state a general theorem that lifts a polynomial-type CLT to a CLT for a class of test functions of polynomial growth that is of class $C^2$ on a closed interval~(Theorem~\ref{thm:CLT_poly-diff}).
The notions of the convex concentration property for a sequence of random symmetric matrices is crucial, which is proved by Talagrand's concentration inequality.

\section{Preliminaries}\label{sec:preliminaries}

\subsection{Higher-dimensional tree}\label{ssec:high_dim_tree}
A nonempty finite collection $X$ of finite sets is called a \textit{finite abstract simplicial complex} if $\sg\in X$ and $\tau\subset\sg$ together implies $\tau\in X$.
Throughout this article, we omit the word ``finite'' and ``abstract'', and simply call $X$ a simplicial complex.
Note that all simplicial complexes include the empty set.
Every element $\sg\in X$ is called a \textit{simplex} in $X$, and a (strict) subset $\rho$ of $\sg$ is called a (strict) \textit{face} of $\sg$.
The \textit{vertex set} of $X$, denoted by $V(X)$, is defined as the union of all simplices: $V(X)\coloneqq\bigcup X$.
We often call $X$ a simplicial complex on $V(X)$.
The \textit{dimension} of $\sg\in X$ is defined by $\dim\sg\coloneqq|\sg|-1$, where $|\sg|$ indicates the cardinality of $\sg$.
The \textit{dimension} of $X$, denoted by $\dim X$, is defined as the maximum of the dimensions of the simplices in $X$.
We call $\sg\in X$ with $\dim\sg=k$ a \textit{$k$-simplex} in $X$.
For $k\ge-1$, let $F_k(X)$ be the set of all $k$-simplices in $X$, and $f_k(X)\coloneqq|F_k(X)|$.
A simplicial complex contained in $X$ is called a \textit{subcomplex} of $X$.
Given a simplex $\sg$ in $X$, let $K(\sg)$ be the subcomplex of $X$ consisting of all faces of $\sg$.
\begin{df}\label{df:pure}
Let $X$ be a simplicial complex.
$X$ is called a \textit{pure $d$-dimensional simplicial complex} if for any $\sg\in X$, there exists a $d$-simplex $\tau$ in $X$ containing $\sg$.
A pure $d$-dimensional simplicial complex $X$ is said to be \textit{strongly connected} if for every pair of $(d-1)$-simplices $\sg$ and $\sg'$ in $X$, there exists a sequence $(\sg_0,\sg_1,\ldots,\sg_r)$ of $(d-1)$-simplices in $X$ such that $\sg_0=\sg$, $\sg_r=\sg'$, and $\sg_{i-1}\cup\sg_i\in F_d(X)$ for all $i=1,2,\ldots,r$.
A \textit{strongly connected component} of a pure $d$-dimensional simplicial complex $X$ is an inclusionwise maximal strongly connected subcomplex of $X$.
We denote by $\bar c(X)$ the number of strongly connected components of $X$.
Note that strongly connected components of $X$ cover $X$ but is not necessarily vertex-disjoint, unlike the connected components.
\end{df}

A strongly connected pure $d$-dimensional simplicial complex $X$ can be constructed by a process
\begin{equation}\label{eq:process}
X(0)\subsetneq X(1)\subsetneq\cdots\subsetneq X(f_d(X))=X,
\end{equation}
where we start with the simplicial complex $X(0)$ generated by a $(d-1)$-simplex $\sg_0$ in $X$, i.e., $X(0)=K(\sg_0)$; at each step $t=1,2,\ldots,f_d(X)$, we pick a $(d-1)$-simplex $\sg_{t-1}$ in $X(t-1)$ and a vertex $v_t\in V(X)$, and define $X(t)\coloneqq X(t-1)\cup K(\sg_{t-1}\cup\{v_t\})$.
\begin{df}[$d$-tree]
A pure $d$-dimensional simplicial complex $X$ is called a \textit{$d$-tree} if, in the generating process~\eqref{eq:process}, the vertices $v_1,v_2,\ldots,v_{f_d(X)}$ can be taken as a new vertices at each step, i.e., $v_t\in V(X)\setminus V(X(t-1))$ for all $t=1,2,\ldots,f_d(X)$.
We also regard the $(d-1)$-dimensional simplicial complex generated by a $(d-1)$-simplex as a $d$-tree for convenience.
We call it a \textit{trivial $d$-tree}.
\end{df}
The following lemma provides a characterization of $d$-trees, which is repeatedly used in this article.
\begin{lem}\label{lem:f0-fd}
Let $X$ be a pure $d$-dimensional simplicial complex.
Then,
\[
f_0(X)\le f_d(X)+d\cdot \bar c(X).
\]
The equality holds if and only if every strongly connected component of $X$ is a connected component that is a $d$-tree.
In particular, if $X$ is strongly connected, then $f_0(X)\le f_d(X)+d$, and the equality holds if and if $X$ is a $d$-tree.
\end{lem}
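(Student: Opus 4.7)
The plan is to first establish the bound in the strongly connected case by tracking vertex growth along a generating process of the form~\eqref{eq:process}, and then pass to the general case by decomposing $X$ into its strongly connected components.

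In the strongly connected case, fix any generating process $X(0)\subsetneq X(1)\subsetneq\cdots\subsetneq X(f_d(X))=X$ with $X(0)=K(\sg_0)$ for some $(d-1)$-simplex $\sg_0$, so that $f_0(X(0))=d$. At each step $t$, since $V(X(t))=V(X(t-1))\cup\{v_t\}$,
\[
f_0(X(t))-f_0(X(t-1))=\1_{\{v_t\notin V(X(t-1))\}}\in\{0,1\}.
\]
Telescoping over $t=1,\ldots,f_d(X)$ gives $f_0(X)\le d+f_d(X)$, with equality iff every $v_t$ is new at its step of introduction; by the definition of $d$-tree, this is equivalent to $X$ being a $d$-tree (the forward direction uses a witnessing process, while equality for any chosen process makes that very process a witness).

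For the general case, let $X_1,\ldots,X_c$ be the strongly connected components of $X$, where $c=\bar c(X)$. The key structural point is that distinct components share no $(d-1)$-simplex: if $\rho$ were a common $(d-1)$-simplex of $X_i\neq X_j$, then every $(d-1)$-simplex of $X_i$ is linked to $\rho$ inside $X_i$, while $\rho$ is linked to every $(d-1)$-simplex of $X_j$ inside $X_j$, so $X_i\cup X_j$ would be strongly connected, contradicting the maximality of $X_i$. Hence the $d$-simplices are partitioned among the components, giving $f_d(X)=\sum_{i=1}^c f_d(X_i)$, whereas vertices may still be shared and yield only $f_0(X)\le\sum_{i=1}^c f_0(X_i)$ with equality iff the components are pairwise vertex-disjoint. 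Applying the strongly connected bound to each $X_i$,
\[
f_0(X)\le\sum_{i=1}^c f_0(X_i)\le\sum_{i=1}^c\bigl(f_d(X_i)+d\bigr)=f_d(X)+d\cdot\bar c(X),
\]
and equality throughout forces both pairwise vertex-disjointness (so the strongly connected components coincide with the connected components of $X$, since they already cover $X$ and each is connected) and each $X_i$ being a $d$-tree. The final assertion of the lemma is the special case $c=1$. The only delicate step, and the main obstacle, is the structural claim that distinct strongly connected components share no $(d-1)$-simplex, on which the additivity $f_d(X)=\sum_{i=1}^c f_d(X_i)$ depends; everything else reduces to a telescoping count along the generating process.
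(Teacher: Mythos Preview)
Your proof is correct and follows essentially the same approach as the paper: handle the strongly connected case by telescoping along a generating process~\eqref{eq:process}, then sum over strongly connected components. The only cosmetic difference is that the paper packages the general case via a disjoint-union complex $X'$ (so that $f_d(X)=f_d(X')$ and $f_0(X)\le f_0(X')$ become formally obvious), whereas you argue directly that distinct strongly connected components share no $(d-1)$-simplex and hence partition the $d$-simplices; your version is in fact more explicit on this point, which the paper leaves as ``obvious.''
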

\begin{proof}
We first suppose that $X$ is strongly connected.
At each step in the generating process~\eqref{eq:process}, the number of vertices and $d$-simplices increases by at most one and by exactly one, respectively.
Noting that $f_0(X(0))=f_d(X(0))+d$, we obtain $f_0(X)\le f_d(X)+d$.
Furthermore, the the number of vertices and $d$-simplices both increases by exactly one at all steps if and only if $X$ is a $d$-tree.

Next, we consider the general case.
Let $X_1,X_2,\ldots,X_{\bar c(X)}$ be the strongly connected components of $X$.
We define a new pure $d$-dimensional simplicial complex $X'$ as the disjoint union of $X_1,X_2,\ldots,X_{\bar c(X)}$.
Then, we obviously have $f_0(X)\le f_0(X')$, $f_d(X)=f_d(X')$, and $\bar c(X)=\bar c(X')$.
Since $f_0(X_i)\le f_d(X_i)+d$ for all $i=1,2,\ldots,\bar c(X)$ from the first part, we obtain
\[
f_0(X)\le f_0(X')\le f_d(X')+d\cdot\bar c(X')=f_d(X)+d\cdot\bar c(X).
\]
Both the above equalities hold if and only if every strongly connected component of $X$ is a connected component that is a $d$-tree.
\end{proof}

\subsection{Adjacency operator for simplicial complexes}\label{ssec:adjacency_matrix}
Let $X$ be a simplicial complex.
We fix a total order in $V(X)$.
Given $\tau\in F_{k+1}(X)$ and $k$-dimensional face $\sg$ of $\tau$, we define the \textit{sign between $\tau$ and $\sg$} as follows: if $\tau=\{v_0,v_1,\ldots,v_{k+1}\}$ with $v_0<v_1<\cdots<v_{k+1}$ and $\sg=\{v_0,v_1,\ldots,\hat{v_i},\ldots,v_{k+1}\}$ for some $i=0,1,\ldots,k+1$, then $\sgn(\tau,\sg)\coloneqq(-1)^i$.
Here, the hat symbol over $v_i$ indicates that the vertex is deleted from $\{v_0,v_1,\ldots,v_{k+1}\}$.
Given $\sg,\sg'\in F_k(X)$ such that $\sg\cup\sg'\in F_{k+1}(X)$, we also define the \textit{sign between $\sg$ and $\sg'$} by $\sgn(\sg,\sg')\coloneqq-\sgn(\sg\cup\sg',\sg)\sgn(\sg\cup\sg',\sg')$.
When $k=0$, the sign $\sgn(\sg,\sg')$ is always equal to $1$; however, when $k\ge1$, the sign $\sgn(\sg,\sg')$ depends on the fixed total order in $V(X)$.
\begin{rem}
Let $\sg,\sg'\in F_k(X)$ with $\sg\cup\sg'\in F_{k+1}(X)$.
Then, it holds that
\begin{equation}\label{eq:sign_simplices}
-\sgn(\sg\cup\sg',\sg)\sgn(\sg\cup\sg',\sg')=\sgn(\sg,\sg\cap\sg')\sgn(\sg',\sg\cap\sg').
\end{equation}
Indeed, letting $\sg\cup\sg'=\{v_0,v_1,\ldots,v_{k+1}\}$, we may write
\[
\sg=\{v_0,v_1,\ldots,\hat v_i,\ldots,v_{k+1}\}\text{ and }\sg'=\{v_0,v_1,\ldots,\hat v_j,\ldots,v_{k+1}\}
\]
for some $0\le i<j\le k+1$ without loss of generality.
Then, one can easily verify that both sides of~\eqref{eq:sign_simplices} are equal to $(-1)^{i+j+1}$.
Hence, we can define the sign $\sgn(\sg,\sg')$ by $\sgn(\sg,\sg\cap\sg')\sgn(\sg',\sg\cap\sg')$ instead of $-\sgn(\sg\cup\sg',\sg)\sgn(\sg\cup\sg',\sg')$.
\end{rem}

For $k\ge-1$, let $\Om^k(X)$ be the $\R$-vector space of all \textit{$k$-forms} of $X$:
\[
\Om^k(X)\coloneqq\begin{cases}
\{\varphi\colon F_k(X)\to\R\}   &\text{if $F_k(X)\neq\emptyset$,}\\
0                               &\text{otherwise.}
\end{cases}
\]
The \textit{$k$th adjacency operator} $\cA_k(X)$ of $X$ is a linear operator on $\Om^k(X)$ defined by
\[
(\cA_k(X)\varphi)(\sg)
\coloneqq\sum_{\substack{\sg'\in F_k(X)\\\sg\cup\sg'\in F_{k+1}(X)}}\sgn(\sg,\sg')\varphi(\sg')
\]
for any $\varphi\in\Om^k(X)$ and $\sg\in F_k(X)$.
The \textit{$k$th adjacency matrix} $A_k(X)$ of $X$ is the $F_k(X)\times F_k(X)$ matrix representation of $\cA_k(X)$ with respect to the canonical basis $\{\1_\sg\}_{\sg\in F_k(X)}$ of $\Om^k(X)$: the $(\sg,\sg')$-entry is given by
\[
A_k(X)_{\sg,\sg'}=\begin{cases}
\sgn(\sg,\sg')	&\text{if $\sg\cup\sg'\in F_{k+1}(X)$,}\\
0			&\text{otherwise}
\end{cases}
\]
for any $\sg,\sg'\in F_k(X)$.
A one-dimensional simplicial complex can be naturally regarded as a finite undirected graph with no multiple edges and no self-loops.
In this case, the zeroth adjacency matrix is nothing but the usual adjacency matrix of the graph.

For an $N\times N$-symmetric matrix $A$, we denote its real eigenvalues by
\[
\lm_1[A]\ge\lm_2[A]\ge\cdots\ge\lm_N[A],
\]
and define the empirical spectral distribution $L_A$ of $A$ by
\[
L_A\coloneqq\frac1N\sum_{i=1}^N\dl_{\lm_i[A]}.
\]
Here, recall that $\dl_\lm$ indicates the Dirac measure at $\lm$.
Note that the empirical spectral distribution of $A_k(X)$ does not depend on the choice of the total order in $V(X)$.
Indeed, a change of the total order in $V(X)$ just flips the signs of rows and columns.
\begin{eg}
Let $n>d\ge1$ be fixed, and let $\cK_n$ denote the complete complex on $[n]$.
As shown in~\cite[Lemma~8]{GW16}, $A_{d-1}(\cK_n)$ has eigenvalues $n-d$ and $-d$ with multiplicities $\binom{n-1}{d-1}$ and $\binom{n-1}d$, respectively:
\begin{align*}
\lm_1[A_{d-1}(\cK_n)]=\lm_2[A_{d-1}(\cK_n)]=\cdots=\lm_{\binom{n-1}{d-1}}[A_{d-1}(\cK_n)]&=n-d
\shortintertext{and}
\lm_{\binom{n-1}{d-1}+1}[A_{d-1}(\cK_n)]=\lm_{\binom{n-1}{d-1}+2}[A_{d-1}(\cK_n)]=\cdots=\lm_{\binom nd}[A_{d-1}(\cK_n)]&=-d.
\end{align*}
In particular, the empirical spectral distribution
\[
L_{A_{d-1}(\cK_n)}
=\frac{\binom{n-1}{d-1}}{\binom nd}\dl_{n-d}+\frac{\binom{n-1}d}{\binom nd}\dl_{-d}
\]
converges weakly to $\dl_{-d}$ as $n\to\infty$.
In the proof of~\cite[Lemma~8]{GW16}, the $(d-1)$st combinatorical Laplacian and the Hodge decomposition of $\Om^{d-1}(\cK_n)$ are exploited.
We here provide a straightforward proof for completeness.
For any $1\notin\rho\in F_{d-2}(\cK_n)$, define
\[
\dl^\rho
\coloneqq\sum_{\substack{\sg\in F_{d-1}(\cK_n)\\\sg\supset\rho}}\sgn(\sg,\rho)\1_\sg\in\Om^{d-1}(\cK_n).
\]
Note that $\{\dl^\rho\}_{1\notin\rho\in F_{d-2}(\cK_n)}$ are linearly independent since given $1\notin\rho_0\in F_{d-2}(\cK_n)$, $\dl^{\rho_0}$ is the only $(d-1)$-form taking nonzero at $\rho_0\cup\{1\}$ among $\{\dl^\rho\}_{1\notin\rho\in F_{d-2}(\cK_n)}$.
Furthermore, a straightforward calculation using~\eqref{eq:sign_simplices} yields $\cA_{d-1}(\cK_n)\dl^\rho=(n-d)\dl^\rho$ for any $1\notin\rho\in F_{d-2}(\cK_n)$.
Therefore, $n-d$ is the eigenvalue of $\cA_{d-1}(\cK_n)$ with multiplicity at least $\binom{n-1}{d-1}$.
Next, for any $1\in\tau\in F_d(\cK_n)$, define
\[
\del^\tau
\coloneqq\sum_{\substack{\sg\in F_{d-1}(\cK_n)\\\sg\subset\tau}}\sgn(\tau,\sg)\1_\sg\in\Om^{d-1}(\cK_n).
\]
Note that $\{\del^\tau\}_{1\in\tau\in F_d(\cK_n)}$ are linearly independent since given $1\in\tau_0\in F_d(\cK_n)$, $\del^{\tau_0}$ is the only $(d-1)$-form taking nonzero at $\tau_0\setminus\{1\}$ among $\{\del^\tau\}_{1\in\tau\in F_d(\cK_n)}$.
Furthermore, one can show that $\cA_{d-1}(\cK_n)\del^\tau=-d\del^\tau$ for any $1\in\tau\in F_d(\cK_n)$ using again~\eqref{eq:sign_simplices}.
Thus, $-d$ is the eigenvalue of $\cA_{d-1}(\cK_n)$ with multiplicity at least $\binom{n-1}d$.
Since $\binom{n-1}{d-1}+\binom{n-1}d=\binom nd=\dim\Om^{d-1}(\cK_n)$, the conclusion follows.
\end{eg}

\section{Expectation of moments}\label{sec:E_moments}
In this section, we briefly summarize the proof of Lemma~\ref{lem:KR17_Lem3.2} for the reader's convenience.
The concepts introduced here are also important for the subsequent sections.
In what follows in this section, we assume that $\lim_{n\to\infty}np(1-p)=\infty$.
Recall that
\begin{align*}
H_n
&\coloneqq\frac1{\sqrt{np(1-p)}}(A_{d-1}(Y^d_{n,p})-\E[A_{d-1}(Y^d_{n,p})])\\
&=\frac1{\sqrt{np(1-p)}}(A_{d-1}(Y^d_{n,p})-pA_{d-1}(\cK_n)]),
\end{align*}
and the conclusion of Lemma~\ref{lem:KR17_Lem3.2} is that for any $k\in\Z_{\ge0}$,
\[
\lim_{n\to\infty}\E\langle L_{H_n},x^k\rangle=\langle\nu_d,x^k\rangle=\begin{cases}
0			&\text{if $k$ is odd,}\\
d^{k/2}\cC_{k/2}	&\text{if $k$ is even,}
\end{cases}
\]
where $\cC_k\coloneqq\frac1{k+1}\binom{2k}k$ is the $k$th Catalan number for $k\ge1$, and $\cC_0\coloneqq1$.
When $k\le1$, the conclusion is trivial because $\langle L_{H_n},1\rangle=1$ and $\langle L_{H_n},x\rangle=0$ almost surely.
Hence, let $k\ge2$ in what follows in this section.
The starting point of the proof is the following expression
\begin{align}\label{eq:starting_point}
\langle L_{H_n},x^k\rangle
&=\frac1{\binom nd}\sum_{i=1}^{\binom nd}\lm_i[H_n]^k\nonumber\\
&=\frac1{\binom nd\{np(1-p)\}^{k/2}}\sum_{i=1}^{\binom nd}\lm_i[A_{d-1}(Y^d_{n,p})-\E A_{d-1}(Y^d_{n,p})]^k\nonumber\\
&=\frac1{\binom nd\{np(1-p)\}^{k/2}}\Tr\bigl((A_{d-1}(Y^d_{n,p})-\E A_{d-1}(Y^d_{n,p}))^k\bigr)\nonumber\\
&=\frac1{\binom nd\{np(1-p)\}^{k/2}}\sum_{\substack{\sg_1,\sg_2,\ldots,\sg_{k+1}\in F_{d-1}(\cK_n)\\\sg_1=\sg_{k+1}}}
\prod_{i=1}^k(A_{d-1}(Y^d_{n,p})-\E A_{d-1}(Y^d_{n,p}))_{\sg_i,\sg_{i+1}}.
\end{align}
Note that the summand in the last line is zero unless $\sg_i\cup\sg_{i+1}\in F_d(\cK_n)$ for all $i=1,2,\ldots,k$.
Hence, we can restrict the summation to such $\sg_i$'s.
The notion of $(n,d)$-words, introdued in Knowles--Rosenthal~\cite[Definition~3.4]{KR17}, is crucial for the further calculation of~\eqref{eq:starting_point}.
However, we herein introduce a slightly different definition of $(n,d)$-words (see Remark~\ref{rem:nd_word} for other definitions).
Our definition of $(n,d)$-words turns out to be critical in the subsequent sections, especially in the proofs of Lemmas~\ref{lem:cW_ks2}(3),~\ref{lem:cW_ks2-}, and~\ref{lem:cW_ks2+}.
\begin{df}[$(n,d)$-word]\label{df:nd_word}
A sequence $\widetilde\sg=(v_0,v_1,\ldots,v_{d-1})\in [n]^d$ consisting of distinct vertices is called an \textit{ordered $(d-1)$-simplex} in $\cK_n$.
Given an ordered $(d-1)$-simplex $\widetilde\sg=(v_0,v_1,\ldots,v_{d-1})$, we denote by $\sg$ its unordered simplex $\{v_0,v_1,\ldots,v_{d-1}\}$.
For $k\ge1$, we call a finite sequence $w=\widetilde{\sg_1}\sg_2\cdots\sg_k$ of an ordered $(d-1)$-simplex $\widetilde{\sg_1}$ and $\sg_2,\ldots,\sg_k\in F_{d-1}(\cK_n)$ an \textit{$(n,d)$-word} if $\sg_i\cup\sg_{i+1}\in F_d(\cK_n)$ for all $i=1,2,\ldots,k-1$.
Here, $k$ is called the \textit{length} of $w=\widetilde{\sg_1}\sg_2\cdots\sg_k$.
An $(n,d)$-word $w=\widetilde{\sg_1}\sg_2\cdots\sg_k$ is said to be \textit{closed} if $\sg_1=\sg_k$.
Two $(n,d)$-words $w=\widetilde{\sg_1}\sg_2\cdots\sg_k$ and $x=\widetilde{\rho_1}\rho_2\cdots\rho_k$ are said to be \textit{equivalent} if there exists a bijection $\pi$ on $[n]$ such that $\pi(w)=x$, i.e.,
\begin{itemize}
\item[(i)] the image $\pi(\sg_i)$ of $\sg_i\subset[n]$ by $\pi$ coincides with $\rho_i$ for each $i=1,2,\ldots,k$; and
\item[(ii)] if $\widetilde{\sg_1}=(v_0,v_1,\ldots,v_{d-1})$, then $\widetilde{\rho_1}=(\pi(v_0),\pi(v_1),\ldots,\pi(v_{d-1}))$.
(In other words, $\pi$ preserves the ordering on the initial ordered simplex.)
\end{itemize}
\end{df}
\begin{rem}\label{rem:nd_word}
There are a few variations of the definitions of $(n,d)$-words and equivalence relation among them.
In~\cite{KR17,LR22}, a $(n,d)$-word is defined in terms of positively oriented $(d-1)$-simplices in $\cK_n$.
However, by identifying each positively oriented simplex with the corresponding unoriented simplex appropriately, we explain it without introducing the notion of orientations on simplices.
In~\cite{KR17,LR22}, a $(n,d)$-word is defined as a finite sequence $w=\sg_1\sg_2\cdots\sg_k$ of elements in $F_{d-1}(\cK_n)$ with $\sg_i\cup\sg_{i+1}\in F_d(\cK_n)$ for all $i=1,2,\ldots,k-1$.
Two $(n,d)$-words $w=\sg_1\sg_2\cdots\sg_k$ and $x=\rho_1\rho_2\cdots\rho_k$ are said to be equivalent if there exists a bijection $\pi$ on $[n]$ such that
\begin{itemize}
\item[(i)] the image $\pi(\sg_i)$ coincides with $\rho_i$ for each $i=1,2,\ldots,k$; and
\item[(ii)] for each $i=1,2,\ldots,k$, if $\sg_i=\{v_0,v_1,\ldots,v_{d-1}\}$ with $v_0<v_1<\cdots<v_{d-1}$ and $\rho_i=\{u_0,u_1,\ldots,u_{d-1}\}$ with $u_0<u_1<\cdots<u_{d-1}$, then the two tuples $(\pi(v_0),\pi(v_1),\ldots,\pi(v_{d-1}))$ and $(u_0,u_1,\ldots,u_{d-1})$ differ by an even permutation.
(In other words, $\pi$ preserves the orientations on $\sg_i$'s.)
\end{itemize}
Although the definition of $(n,d)$-words in~\cite{AKS23} is the same as above, the equivalence relation among them are different.
In~\cite{AKS23}, the Condition~(ii) is replaced with the following condition.
\begin{itemize}
\item[(ii)'] if $\sg_1=\{v_0,v_1,\ldots,v_{d-1}\}$ with $v_0<v_1<\cdots<v_{d-1}$, then $\pi(v_0)<\pi(v_1)<\cdots<\pi(v_{d-1})$.
(In other words, $\pi$ preserves the canonical ordering on the initial simplex.)
\end{itemize}
\end{rem}
We also use the notion of the induced ordering of a given $(n,d)$-word later, which will be used in Remark~\ref{rem:KR17_Lem3.11} and the proofs of Lemmas~\ref{lem:cW_ks2-} and~\ref{lem:cW_ks2+}.
\begin{df}[Induced ordering]\label{df:induced_ordering}
Given an $(n,d)$-word $w=\widetilde{\sg_1}\sg_2\cdots\sg_k$, we construct ordered $(d-1)$-simplices $\widetilde{\sg_2},\widetilde{\sg_3},\ldots,\widetilde{\sg_k}$ inductively as follows.
Suppose that we obtain ordered $(d-1)$-simplices $\widetilde{\sg_1},\widetilde{\sg_2},\ldots,\widetilde{\sg_i}$ for some $i=1,2,\ldots,k-1$ and also that $\widetilde{\sg_i}=(v_0,v_1,\ldots,v_{d-1})$ and $\sg_{i+1}=\{v\}\cup\{v_0,v_1,\ldots,\hat v_j,\ldots,v_{d-1}\}$ for some $j=0,1,\ldots,d-1$.
We then define $\widetilde{\sg_{i+1}}\coloneqq(v,v_0,v_1,\ldots,\hat v_j,\ldots,v_{d-1})$.
For any $\sg\in\{\sg_1,\sg_2,\ldots,\sg_k\}$, the \textit{induced ordering} on $\sg$ by $w$ is defined as $\widetilde{\sg_{i'}}$, where $i'\in\{1,2,\ldots,k\}$ is the first index such that $\sg_{i'}=\sg$.
\end{df}
Other important notions are the support complex and sign of a given $(n,d)$-word.
\begin{df}[Support complex of $(n,d)$-word]
Given an $(n,d)$-word $w=\widetilde{\sg_1}\sg_2\cdots\sg_k$, its \textit{vertex support} and \textit{$d$-simplex support} are defined as
\[
\Supp_0(w)\coloneqq\sg_1\cup\sg_2\cup\cdots\cup\sg_k\quad\text{and}\quad\Supp_d(w)\coloneqq\{\sg_i\cup\sg_{i+1}\mid i=1,2,\ldots,k-1\},
\]
respectively.
Furthermore, let $X_w$ be the simplicial complex generated by $\Supp_d(w)$, i.e., the smallest simplicial complex containing $\Supp_d(w)$.
We call $X_w$ the \textit{support complex} of $w$.
\end{df}
\begin{df}[Sign of $(n,d)$-word]
Let $w=\widetilde{\sg_1}\sg_2\cdots\sg_k$ be an $(n,d)$-word.
For each $\tau\in\Supp_d(w)$, we define the \textit{sign of $w$ at $\tau$} by
\[
\sgn(w,\tau)\coloneqq\prod_{\substack{i\in\{1,2,\ldots,k-1\}\\\sg_i\cup\sg_{i+1}=\tau}}\sgn(\sg_i,\sg_{i+1}).
\]
We also define the \textit{sign of $w$} by
\begin{equation}\label{eq:sgn_w}
\sgn(w)\coloneqq\prod_{\tau\in\Supp_d(w)}\sgn(w,\tau)=\prod_{i=1}^{k-1}\sgn(\sg_i,\sg_{i+1}).
\end{equation}
\end{df}
\begin{rem}
Note that whenever the $(n,d)$-word $w$ is closed, $\sgn(w)$ does not depend on the choice of the total order in $V(X)$ because a change of the ordering of the vertices in a simplex affects an even number of $\sgn(\sg_i,\sg_{i+1})$'s in~\eqref{eq:sgn_w}.
In particular, if $w$ and $x$ are equivalent closed $(n,d)$-words, then $\sgn(w)=\sgn(x)$.
\end{rem}
%
%
%
%

In terms of $(n,d)$-words,~\eqref{eq:starting_point} can be written as
\begin{equation}\label{eq:kth_moment}
\langle L_{H_n},x^k\rangle =\frac1{d!\binom nd\{np(1-p)\}^{k/2}}\sum_{w=\widetilde{\sg_1}\sg_2\cdots\sg_{k+1}}T_n(w),
\end{equation}
where $w=\widetilde{\sg_1}\sg_2\cdots\sg_{k+1}$ in the summation runs over all closed $(n,d)$-words of length $k+1$, and
\[
T_n(w)\coloneqq\prod_{i=1}^k(A_{d-1}(Y^d_{n,p})-\E A_{d-1}(Y^d_{n,p}))_{\sg_i,\sg_{i+1}}.
\]
From the independence of the appearance of $d$-simplices in $Y^d_{n,p}$,
\[
\bar T_n(w)\coloneqq\E T_n(w)=\sgn(w)\prod_{\tau\in\Supp_d(w)}\E\bigl[(\chi_p-p)^{N_w(\tau)}\bigr].
\]
Here, $\chi_p$ is a Bernoulli random variable with parameter $p$, and $N_w(\tau)$ denotes the number of times the $(n,d)$-word $w=\widetilde{\sg_1}\sg_2\cdots\sg_{k+1}$ traverses the $d$-simplex $\tau$:
\[
N_w(\tau)\coloneqq|\{i\in\{1,2,\ldots,k\}\mid\sg_i\cup\sg_{i+1}=\tau\}|.
\]
We note that $\bar T_n(w)=0$ unless $N_w(\tau)\ge2$ for all $\tau\in\Supp_d(w)$.
Furthermore, $\bar T_n(w)=\bar T_n(x)$ if $w$ and $x$ are equivalent $(n,d)$-words.
For $1\le s\le n$, let $\cW_{k,s}$ denote the set of all representatives for the equivalence classes of closed $(n,d)$-words $w$ of length $k+1$ such that $N_w(\tau)\ge2$ for all $\tau\in\Supp_d(w)$ and $|\Supp_0(w)|=s$.
From the above discussion, we obtain
\begin{equation}\label{eq:E_kth_moment}
\E\langle L_{H_n},x^k\rangle
=\frac1{d!\binom nd\{np(1-p)\}^{k/2}}\sum_{s=1}^n \sum_{w\in\cW_{k,s}}|[w]|\bar T_n(w).
\end{equation}
Here, $[w]$ denotes the equivalence class of the $(n,d)$-word $w$.
For a further calculation of~\eqref{eq:E_kth_moment}, we use the following lemmas.
%
\begin{lem}[cf.~{\cite[Claims~3.9 and~3.10]{KR17}}]\label{lem:cW_ks}
Let $k\ge2$ and $1\le s\le n$.
Then, the following statements hold.
\begin{enumerate}
\item If $\cW_{k,s}\neq\emptyset$, then $d+1\le s\le\lfloor k/2\rfloor+d$.
\item If $d+1\le s\le\lfloor k/2\rfloor+d$, then $|\cW_{k,s}|\le d!(\lfloor k/2\rfloor+1)^{dk}$.
\item For every $w\in\cW_{k,s}$, there exist exactly $n(n-1)\cdots(n-s+1)$ number of $(n,d)$-words that are equivalent to $w$.
\end{enumerate}
\end{lem}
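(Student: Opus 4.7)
The plan is to analyse the support complex $X_w$ of a representative $w=\widetilde{\sg_1}\sg_2\cdots\sg_{k+1}\in\cW_{k,s}$ and handle the three parts of the lemma in sequence.

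For Part~(1), I would invoke Lemma~\ref{lem:f0-fd}. The complex $X_w$ is pure $d$-dimensional by construction and strongly connected because $\sg_i\cup\sg_{i+1}\in F_d(X_w)$ for every consecutive pair. Since every $\tau\in\Supp_d(w)$ is traversed at least twice and $\sum_{\tau\in\Supp_d(w)}N_w(\tau)=k$, it follows that $f_d(X_w)\le\lfloor k/2\rfloor$. Lemma~\ref{lem:f0-fd} then yields $s=f_0(X_w)\le f_d(X_w)+d\le\lfloor k/2\rfloor+d$, and the lower bound $s\ge d+1$ is immediate from $|\sg_1\cup\sg_2|=d+1$.

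For Part~(3), I would compute the stabilizer of $w$ under the action of $\Sym([n])$. The orientation clause on the initial ordered simplex forces any $\pi$ with $\pi(w)=w$ to fix each vertex of $\sg_1$ individually. By induction on $i$: if $\pi$ fixes $\sg_1\cup\cdots\cup\sg_i$ pointwise, then $\pi(\sg_{i+1})=\sg_{i+1}$ together with $\pi$ already fixing the $d-1$ common vertices $\sg_i\cap\sg_{i+1}$ pins the single remaining vertex of $\sg_{i+1}$. Hence the stabilizer acts trivially on $\Supp_0(w)$, so the orbit is in bijection with the injections $\Supp_0(w)\hookrightarrow[n]$, giving $n(n-1)\cdots(n-s+1)$ elements.

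For Part~(2), I would enumerate canonical representatives by relabelling so that $\Supp_0(w)\subseteq\{1,2,\ldots,s\}$ and $\sg_1=\{1,\ldots,d\}$. The initial ordered simplex $\widetilde{\sg_1}$ contributes at most $d!$ choices. At each of the $k$ subsequent steps, $\sg_{i+1}$ is specified by choosing a vertex of $\sg_i$ to delete (at most $d$ choices) and a vertex of $\{1,\ldots,s\}$ outside $\sg_i$ after deletion to adjoin (at most $s-d+1\le\lfloor k/2\rfloor+1$ choices). Since $d(\lfloor k/2\rfloor+1)\le(\lfloor k/2\rfloor+1)^d$ for all $d\ge 1$ and $k\ge 2$ (indeed $(\lfloor k/2\rfloor+1)^{d-1}\ge 2^{d-1}\ge d$), combining the bounds gives $|\cW_{k,s}|\le d!\,(\lfloor k/2\rfloor+1)^{dk}$.

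The step I expect to be most delicate is the inductive pinning in Part~(3): the orientation condition is a priori only a constraint on $\widetilde{\sg_1}$, and one must verify carefully that the connectivity of the walk propagates this constraint to every vertex of $\Supp_0(w)$. Once this is in place, Parts~(1) and~(2) reduce to routine applications of Lemma~\ref{lem:f0-fd} and walk counting with a bounded vertex pool.
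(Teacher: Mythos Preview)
Your proposal is correct and follows essentially the same approach as the paper. Part~(1) is identical. For Part~(2) the paper uses the cruder bound $d!\binom{s}{d}^k$ (ignoring the walk constraint entirely) and then estimates $\binom{s}{d}\le(\lfloor k/2\rfloor+1)^d$, whereas you exploit the vertex-swap structure to get $d!\,(d(s-d+1))^k$ before applying $d\le(\lfloor k/2\rfloor+1)^{d-1}$; both routes land on the same bound. For Part~(3) the paper builds an explicit bijection $\Phi\colon[w]\to\{(v_1,\ldots,v_s)\}$ by reading off vertices in order of first appearance, while your orbit--stabilizer argument is the same computation in group-theoretic language; your inductive pinning is sound exactly as written, since $\pi(\sg_{i+1})=\sg_{i+1}$ together with $\pi$ fixing the $d-1$ common vertices forces $\pi$ to fix the remaining vertex by bijectivity.
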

\begin{proof}
(1)~Since $k\ge2$, the first inequality is trivial.
For the second inequality, let $w\in\cW_{k,s}$.
From the definition of $\cW_{k,s}$,
\[
k=\sum_{\tau\in\Supp_d(w)}N_w(\tau)\ge2|\Supp_d(w)|.
\]
Therefore, $s=|\Supp_0(w)|\le|\Supp_d(w)|+d\le\lfloor k/2\rfloor+d$.
Here, the first inequality follows from Lemma~\ref{lem:f0-fd} since $X_w$ is a strongly connected pure $d$-dimensional simplicial complex.

(2)~We may assume that all the $(n,d)$-words $w$ in $\cW_{k,s}$ are supported on $\{1,2,\ldots,s\}$, i.e., $\Supp_0(w)=\{1,2,\ldots,s\}$.
Since there are $\binom sd$ possible $(d-1)$-simplices whose vertices are in $\{1,2,\ldots,s\}$, the number of closed $(n,d)$-words $w$ of length $k+1$ such that $\Supp_0(w)=\{1,2,\ldots,s\}$ is bounded above by
\begin{align*}
d!\binom sd^k
&\le d!\biggl(\frac{s(s-1)\cdots(s-d+1)}{d!}\biggr)^k\\
&\le d!\biggl(\frac{\lfloor k/2\rfloor+d}d\frac{\lfloor k/2\rfloor+d-1}{d-1}\cdots\frac{(\lfloor k/2\rfloor+1)}1\biggr)^k
\le d!(\lfloor k/2\rfloor+1)^{dk}.
\end{align*}

(3)~Let $w=\widetilde{\sg_1}\sg_2\cdots\sg_{k+1}\in\cW_{k,s}$.
It suffices to construct a bijective map $\Phi$ from the equivalence class $[w]$ to the set of all tuples of the forms $(v_1,v_2,\ldots,v_s)$ consisting of distinct vertices $v_1,v_2,\ldots,v_s\in[n]$.
Such bijective map $\Phi$ is given in the following way.
Let $x=\widetilde{\rho_1}\rho_2\cdots\rho_{k+1}\in[w]$ be an $(n,d)$-word with $\widetilde{\rho_1}=(v_1,v_2,\ldots,v_d)$.
We can further take distinct vertices $v_{d+1},v_{d+2},\ldots,v_s$ satisfying that for any $1\le i\le k+1$, the truncated $(n,d)$-word $\widetilde{\rho_1}\rho_2\cdots\rho_i$ is supported on $\{v_1,v_2,\ldots,v_r\}$ for some $d\le r\le s$.
Noting that $(v_1,v_2,\ldots,v_s)$ is uniquely determined by $x$, we define $\Phi(x)\coloneqq(v_1,v_2,\ldots,v_s)$.
We can easily verify that the map $\Phi$ is bijective.
\end{proof}
The following is a slightly simpler version of~\cite[Lemma~3.11]{KR17}.
\begin{lem}[{cf.~\cite[Lemma~3.11]{KR17}}]\label{lem:KR17_Lem3.11}
Let $k\ge2$ be even, and let $w\in\cW_{k,k/2+d}$ be fixed.
Then, the following hold.
\begin{enumerate}
\item $N_w(\tau)=2$ for every $\tau\in\Supp_d(w)$.
In particular, $|\Supp_d(w)|=k/2$.
\item $\sgn(w,\tau)=1$ for every $\tau\in\Supp_d(w)$.
\item $|\cW_{k,k/2+d}|=d^{k/2}\cC_{k/2}$.
\end{enumerate}
\end{lem}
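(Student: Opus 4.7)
For Part~(1), I would combine two inequalities: from $N_w(\tau) \ge 2$ one gets $k = \sum_{\tau} N_w(\tau) \ge 2|\Supp_d(w)|$, so $|\Supp_d(w)| \le k/2$; and since $X_w$ is strongly connected, Lemma~\ref{lem:f0-fd} gives $|\Supp_0(w)| \le |\Supp_d(w)| + d \le k/2 + d$. The hypothesis $|\Supp_0(w)| = k/2 + d$ forces equality throughout, yielding $|\Supp_d(w)| = k/2$, $N_w(\tau) = 2$ for every $\tau$, and, by the equality case of Lemma~\ref{lem:f0-fd}, $X_w$ is a $d$-tree. The central geometric tool for Parts~(2) and~(3) is then that every $d$-tree admits a ``leaf'' $d$-simplex whose newest vertex lies in no other $d$-simplex.

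Part~(2) I would prove by induction on $|\Supp_d(w)| = k/2$. The base case $k = 2$ is immediate: $\sgn(w,\tau) = \sgn(\sg_1,\sg_2)\sgn(\sg_2,\sg_3) = \sgn(\sg_1,\sg_2)^2 = 1$ since $\sg_3 = \sg_1$. For the inductive step, I apply the generating process~\eqref{eq:process} to $X_w$ starting from $\sg_0 \coloneqq \sg_1$, so that the last-added $d$-simplex $\tau = \sg \cup \{v\}$ satisfies $v \notin \sg_1$ and $v$ belongs only to $\tau$. The latter forces any position $j$ with $v \in \sg_j$ to have both neighbors $\sg_{j \pm 1}$ in $\tau$, so each such $j$ contributes two crossings of $\tau$; a direct count on the cyclic index set of the closed walk then gives that $N_w(\tau) = 2$ leaves exactly one such $j$, with $\sg_{j-1} = \sg_{j+1} = \sg$. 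Hence $\sgn(w,\tau) = \sgn(\sg, \sg_j)^2 = 1$. Deleting the entries $\sg_j, \sg_{j+1}$ from $w$ yields $w' \in \cW_{k-2, (k-2)/2+d}$ with $\Supp_d(w') = \Supp_d(w) \setminus \{\tau\}$ and the same initial ordered simplex; since crossings of any $\tau' \ne \tau$ are unaffected, induction gives $\sgn(w,\tau') = \sgn(w',\tau') = 1$.

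For Part~(3) I would set up a bijection
\[
\cW_{k, k/2+d} \longleftrightarrow \{\text{Dyck paths of length $k$}\} \times \{1, 2, \ldots, d\}^{k/2},
\]
immediately yielding $|\cW_{k, k/2+d}| = d^{k/2}\cC_{k/2}$. Classify each transition $\sg_i \to \sg_{i+1}$ as \emph{forward} if $\sg_i \cup \sg_{i+1}$ has not been traversed before, and \emph{backward} otherwise: Part~(1) gives $k/2$ of each, and the F/B word is a Dyck word since no $d$-simplex can be backward-visited before being forward-visited. At a forward transition, using the induced ordering of $\widetilde{\sg_i}$ from Definition~\ref{df:induced_ordering}, $\sg_{i+1}$ is obtained by replacing one of the $d$ vertices of $\sg_i$ by a fresh vertex, giving $d$ labeled choices. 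The key claim, and the main obstacle, is that each backward transition is \emph{forced}: $\sg_{i+1}$ coincides with $\sg_{i'}$, where $i'$ is the step of the matching forward transition. This I would prove by the same leaf induction as in Part~(2), since the leaf bounce $\sg, \sg_j, \sg$ is itself of this ``entry equals exit'' form, and peeling it off reduces the assertion to the smaller walk $w'$. For the inverse direction, a Dyck path together with its labels uniquely reconstructs a canonical representative with $\widetilde{\sg_1} = (1, 2, \ldots, d)$ by introducing new vertices $d+1, d+2, \ldots$ at successive forward steps.
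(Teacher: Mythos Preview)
Your proposal is correct. The paper itself does not give a self-contained proof of this lemma---it defers to~\cite[Lemma~3.11]{KR17} and only sketches Part~(3) in Remark~\ref{rem:KR17_Lem3.11}---so your write-up is more detailed than what appears here. Your argument for Part~(1) is the standard one (and is implicit in the proof of Lemma~\ref{lem:cW_ks}(1)). Your leaf-peeling induction for Part~(2) is correct; the only point worth making explicit is that when you delete $\sg_j,\sg_{j+1}$ you remove the three transitions $j{-}1\to j$, $j\to j{+}1$, $j{+}1\to j{+}2$ and insert the splice $j{-}1\to j{+}2$, and since $\sg_{j-1}=\sg_{j+1}=\sg$ the spliced crossing equals the deleted crossing $\sg_{j+1}\cup\sg_{j+2}$, so crossing counts and signs at every $\tau'\neq\tau$ are genuinely unchanged.

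For Part~(3), your Dyck-path encoding is essentially the same construction as the paper's bijection to rooted planar trees with $k/2$ edges labeled by $\{1,\dots,d\}$: the forward/backward word is exactly the contour (depth-first) Dyck encoding of the planar tree on $\{\sg_i\}$, and your label ``which coordinate of the induced ordering is replaced at a forward step'' is precisely the edge-label described in Remark~\ref{rem:KR17_Lem3.11}. The two formulations differ only in whether one packages the Catalan object as a Dyck path or as a rooted planar tree, and your leaf-induction proof that each backward step is forced is a clean way to establish injectivity.
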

\begin{rem}\label{rem:KR17_Lem3.11}
A key for the proof of Lemma\ref{lem:KR17_Lem3.11}(3) is to construct a bijection from $\cW_{k,k/2+d}$ to the set of all rooted planar trees with $k/2$ edges, to each of which an integer from $\{1,2,\cdots,d\}$ is assigned.
Once we obtain such a bijection, the conclusion follows immediately from the fact that the number of rooted planar trees with $k/2$ edges is given by the $(k/2)$th Catalan number $\cC_{k/2}$ and that the number of possible labeling on edges is $d^{k/2}$.
Although we adopt slightly different definitions of $(n,d)$-words and the equivalence relation among them from those in~\cite{KR17}, the construction of the rooted planar trees with $k/2$ edges from a given $(n,d)$-word $w=\widetilde{\sg_1}\sg_2\cdots\sg_{k+1}\in\cW_{k,k/2+d}$ is the same as that in the proof of~\cite[Lemma~3.11]{KR17}, where the vertex set and edge set of each rooted planar tree are identified as $\{\sg_i\mid i=1,2,\ldots,k\}$ and $\{\{\sg_i,\sg_{i+1}\}\mid i=1,2,\ldots,k\}$, respectively (see~\cite[Lemma~3.11]{KR17} for more details).
However, one should be careful in labeling to each edge $\{\sg,\sg'\}$ in the rooted planar tree.
We assign an integer to an edge $\{\sg,\sg'\}$ in the rooted planar tree according to what number of vertex in the induced ordering is removed when the word $w=\widetilde{\sg_1}\sg_2\cdots\sg_{k+1}$ traverses $\{\sg,\sg'\}$ for the first time.
\end{rem}

Combining Lemmas~\ref{lem:cW_ks} and~\ref{lem:KR17_Lem3.11} with~\eqref{eq:E_kth_moment}, the proof of Lemma~\ref{lem:KR17_Lem3.2} proceeds as follows.
\begin{proof}[Proof of Lemma~\ref{lem:KR17_Lem3.2}]
From~\eqref{eq:E_kth_moment} and Lemma~\ref{lem:cW_ks}(1)(3), we have
\begin{equation}\label{eq:KR17_Lem3.2_1}
\E\langle L_{H_n},x^k\rangle
=\sum_{s=d+1}^{\lfloor k/2\rfloor+d}\sum_{w\in\cW_{k,s}}\frac{(n-d)(n-d-1)\cdots(n-s+1)}{\{np(1-p)\}^{k/2}}\bar T_n(w).
\end{equation}
Now, for any $d+1\le s\le\lfloor k/2\rfloor+d$ and $w\in\cW_{k,s}$,
\begin{align}\label{eq:KR17_Lem3.2_2}
|\bar T_n(w)|=\Biggl|\prod_{\tau\in\Supp_d(w)}\E\bigl[(\chi_p-p)^{N_w(\tau)}\bigr]\Biggr|
&=\prod_{\tau\in\Supp_d(w)}\bigl|p(1-p)^{N_w(\tau)}+(1-p)(-p)^{N_w(\tau)}\bigr|\nonumber\\
&\le\prod_{\tau\in\Supp_d(w)}\bigl[p(1-p)\bigl\{(1-p)^{N_w(\tau)-1}+p^{N_w(\tau)-1}\bigr\}\bigr]\nonumber\\
&\le\{p(1-p)\}^{|\Supp_d(w)|}\nonumber\\
&\le\{p(1-p)\}^{s-d}.
\end{align}
For the third line, we note that $(1-x)^m+x^m\le1$ for any $x\in[0,1]$ and $m\in\N$.
For the last line, we used Lemma~\ref{lem:f0-fd}.
Therefore, it follows from Lemma~\ref{lem:cW_ks}(2) that for any $d+1\le s<k/2+d$,
\begin{align*}
\Biggl|\sum_{w\in\cW_{k,s}}\frac{(n-d)(n-d-1)\cdots(n-s+1)}{\{np(1-p)\}^{k/2}}\bar T_n(w)\Biggr| \le\frac{d!(\lfloor k/2\rfloor+1)^{dk}}{\{np(1-p)\}^{k/2+d-s}} \le\frac{d!(\lfloor k/2\rfloor+1)^{dk}}{\sqrt{np(1-p)}}.
\end{align*}
Thus, for odd $k$, $\lim_{n\to\infty}\E\langle L_{H_n},x^k\rangle=0$.
Furthermore, when $k$ is even, we have
\[
\E\langle L_{H_n},x^k\rangle
=\sum_{w\in\cW_{k,k/2+d}}\frac{(n-d)(n-d-1)\cdots(n-(k/2+d)+1)}{\{np(1-p)\}^{k/2}}\bar T_n(w)+o_{d,k}(1).
\]
Here, $o_{d,k}(1)$ is a function of $n$, depending on $d$ and $k$, that converges to zero as $n\to\infty$.
Finally, Lemma~\ref{lem:KR17_Lem3.11} yields
\begin{align*}
&\sum_{w\in\cW_{k,k/2+d}}\frac{(n-d)(n-d-1)\cdots(n-(k/2+d)+1)}{\{np(1-p)\}^{k/2}}\bar T_n(w)\\
&=d^{k/2}\cC_{k/2}\frac{(n-d)(n-d-1)\cdots(n-(k/2+d)+1)}{\{np(1-p)\}^{k/2}}\E\bigl[(\chi_p-p)^2\bigr]^{k/2}\\
&=d^{k/2}\cC_{k/2}\frac{(n-d)(n-d-1)\cdots(n-(k/2+d)+1)}{n^{k/2}}\\
&\xrightarrow[n\to\infty]{}d^{k/2}\cC_{k/2},
\end{align*}
which completes the proof.
\end{proof}

\section{Covariance of moments}\label{sec:cov_moments}
Our aim in this section is to prove Theorem~\ref{thm:multi_CLT}(1).
Recall that $p_\infty\coloneqq\lim_{n\to\infty}p\in[0,1]$, and $\sg(k,l)\in[0,\infty)$ is given as follows: if $k+l$ is even, then
\begin{align*}
\sg(k,l)
&\coloneqq\1_{\{k,l\colon\text{even}\}}(d+1)!\{(k/2)d^{k/2}\cC_{k/2}\}\{(l/2)d^{l/2}\cC_{l/2}\}(2p_\infty-1)^2\\
&\qad+\sum_{r=3}^{k\wedge l}\frac2r\Biggl(d!k\sum_{\substack{k_1,k_2,\ldots,k_r\in2\Z_{\ge0}\\k_1+\cdots+k_r=k-r}}\prod_{q=1}^rd^{k_q/2}\cC_{k_q/2}\Biggr)\Biggl(d!l\sum_{\substack{l_1,l_2,\ldots,l_r\in2\Z_{\ge0}\\l_1+\cdots+l_r=l-r}}\prod_{q=1}^rd^{l_q/2}\cC_{l_q/2}\Biggr)p_\infty(1-p_\infty),
\end{align*}
and $\sg(k,l)\coloneqq0$ otherwise.
In what follows in this section, we also assume that $\lim_{n\to\infty}np(1-p)=\infty$.
Our goal here is to prove that for any $k,l\in\Z_{\ge0}$,
\[
\lim_{n\to\infty}n^d\{np(1-p)\}\cdot\cov(\langle L_{H_n},x^k\rangle,\langle L_{H_n},x^l\rangle)
=\sg(k,l).
\]

\subsection{Proof of Theorem~\ref{thm:multi_CLT}(1)}
When $k\wedge l\le1$, the conclusion of Theorem~\ref{thm:multi_CLT}(1) is trivial because $\langle L_{H_n},1\rangle=1$ and $\langle L_{H_n},x\rangle=0$ almost surely.
Hence, let $k,l\ge2$ in what follows in this section.
We first recall that
\[
\langle L_{H_n},x^k\rangle =\frac1{d!\binom nd\{np(1-p)\}^{k/2}}\sum_{w=\widetilde{\sg_1}\sg_2\cdots\sg_{k+1}}T_n(w)
\]
as seen in~\eqref{eq:kth_moment}.
Here, $w=\widetilde{\sg_1}\sg_2\cdots\sg_{k+1}$ in the summation runs over all closed $(n,d)$-words of length $k+1$, and
\[
T_n(w)=\prod_{i=1}^k(A_{d-1}(Y^d_{n,p})-\E A_{d-1}(Y^d_{n,p}))_{\sg_i,\sg_{i+1}}.
\]
Therefore,
\begin{align}\label{eq:V_kth_moment_1}
&\cov(\langle L_{H_n},x^k\rangle,\langle L_{H_n},x^l\rangle)\nonumber\\
&=\frac1{\bigl\{d!\binom nd\bigr\}^2\{np(1-p)\}^{(k+l)/2}}\sum_{\substack{w^1=\widetilde{\sg_1^1}\sg_2^1\cdots\sg_{k+1}^1\\w^2=\widetilde{\sg_1^2}\sg_2^2\cdots\sg_{l+1}^2}}\E\bigl[\bigl(T_n(w^1)-\bar T_n(w^1)\bigr)\bigl(T_n(w^2)-\bar T_n(w^2)\bigr)\bigr]
\end{align}
Here, $w^1=\widetilde{\sg_1^1}\sg_2^1\cdots\sg_{k+1}^1$ and $w^2=\widetilde{\sg_1^1}\sg_2^2\cdots\sg_{l+1}^2$ in the summation run over all closed $(n,d)$-words of length $k+1$ and $l+1$, respectively.
The role of $(n,d)$-words in Section~\ref{sec:E_moments} is now played by $(n,d)$-sentences.
\begin{df}[$(n,d)$-sentence]
For $h\ge1$, we call a finite sequence $a=(w^1,w^2,\ldots,w^h)$ of $(n,d)$-words an \textit{$(n,d)$-sentence}.
Two $(n,d)$-sentences $a=(w^1,w^2,\ldots,w^h)$ and $b=(x^1,x^2,\ldots,x^h)$ are said to be \textit{equivalent} if there exists a permutation $\pi$ on $[n]$ such that $\pi(w^j)=x^j$ for $j\in[h]\coloneqq\{1,2,\ldots,h\}$ in the sense of Definition~\ref{df:nd_word}.
\end{df}
\begin{df}[Support complex of $(n,d)$-sentence]
Given an $(n,d)$-sentence $a=(w_1,w_2\ldots,w_h)$, its \textit{vertex support} and \textit{$d$-simplex support} are defined as
\[
\Supp_0(a)\coloneqq\bigcup_{j=1}^h\Supp_0(w^j)\quad\text{and}\quad\Supp_d(a)\coloneqq\bigcup_{j=1}^h\Supp_d(w^j),
\]
respectively.
Furthermore, we call $X_a\coloneqq\bigcup_{j=1}^h X_{w^j}$ the \textit{support complex} of $a$.
\end{df}
\begin{df}[Sign of $(n,d)$-sentence]
Given an $(n,d)$-sentence $a=(w_1,w_2\ldots,w_h)$, we define the \textit{sign of $a$} by
\[
\sgn(a)\coloneqq\prod_{j=1}^h\sgn(w^j).
\]
\end{df}
%
%
For $(n,d)$-sentence $a=(w^1,w^2)$ consisting of two closed $(n,d)$-words $w^1=\widetilde{\sg_1^1}\sg_2^1\cdots\sg_{k+1}^1$ and $w^2=\widetilde{\sg_1^1}\sg_2^2\cdots\sg_{l+1}^2$ of length $k+1$ and $l+1$, respectively, we set
\begin{align}\label{eq:barTa}
\bar T_n(a)
&\coloneqq\E\bigl[\bigl(T_n(w^1)-\bar T_n(w^1)\bigr)\bigl(T_n(w^2)-\bar T_n(w^2)\bigr)\bigr]\nonumber\\
&=\E[T_n(w^1)T_n(w^2)]-\bar T_n(w^1)\bar T_n(w^2)\nonumber\\
&=\E\Biggl[\Biggl(\prod_{i=1}^k(A_{d-1}(Y^d_{n,p})-\E A_{d-1}(Y^d_{n,p}))_{\sg_i^1,\sg_{i+1}^1}\Biggr)\Biggl(\prod_{i=1}^l(A_{d-1}(Y^d_{n,p})-\E A_{d-1}(Y^d_{n,p}))_{\sg_i^2,\sg_{i+1}^2}\Biggr)\Biggr]\nonumber\\
&\qad-\E\Biggl[\prod_{i=1}^k(A_{d-1}(Y^d_{n,p})-\E A_{d-1}(Y^d_{n,p}))_{\sg_i^1,\sg_{i+1}^1}\Biggr]\E\Biggl[\prod_{i=1}^k(A_{d-1}(Y^d_{n,p})-\E A_{d-1}(Y^d_{n,p}))_{\sg_i^2,\sg_{i+1}^2}\Biggr]\nonumber\\
&=\sgn(a)\prod_{\tau\in\Supp_d(a)}\E\bigl[(\chi_p-p)^{N_a(\tau)}\bigr]\nonumber\\
&\qad-\sgn(a)\Biggl(\prod_{\tau\in\Supp_d(w^1)}\E\bigl[(\chi_p-p)^{N_{w^1}(\tau)}\bigr]\Biggr)\Biggl(\prod_{\tau\in\Supp_d(w^1)}\E\bigl[(\chi_p-p)^{N_{w^2}(\tau)}\bigr]\Biggr).
\end{align}
Here, $N_a(\tau)\coloneqq N_{w^1}(\tau)+N_{w^2}(\tau)$.
Note that $\bar T_n(a)=0$ unless $\Supp_d(w^1)\cap\Supp_d(w^2)\neq\emptyset$.
Also, $\bar T_n(a)=0$ unless $N_a(\tau)\ge2$ for all $\tau\in\Supp_d(a)$.
Furthermore, $\bar T_n(a)=\bar T_n(b)$ if $a$ and $b$ are equivalent $(n,d)$-sentences.
For $1\le s\le n$, let $\cW_{k,l,s}^{(2)}$ denote the set of all representatives for the equivalence classes of sentences $a$ consisting of two closed $(n,d)$-words $w^1$ and $w^2$ of length $k+1$ and $l+1$, respectively, such that $\Supp_d(w^1)\cap\Supp_d(w^2)\neq\emptyset$, $N_a(\tau)\ge2$ for all $\tau\in\Supp_d(a)$, and $|\Supp_0(a)|=s$.
Then, from~\eqref{eq:V_kth_moment_1}, we obtain
\begin{equation}\label{eq:V_kth_moment_2}
\cov(\langle L_{H_n},x^k\rangle,\langle L_{H_n},x^l\rangle) =\frac1{\bigl\{d!\binom nd\bigr\}^2\{np(1-p)\}^{(k+l)/2}}\sum_{s=1}^n \sum_{a=(w^1,w^2)\in\cW_{k,l,s}^{(2)}}|[a]|\bar T_n(a).
\end{equation}
Here, $[a]$ denotes the equivalence class of the $(n,d)$-sentence $a$.
For a further calculation of~\eqref{eq:V_kth_moment_2}, we use the following lemma.
\begin{lem}\label{lem:cW_ks2}
Let $k,l\ge2$ and $1\le s\le n$.
Then, the following statements hold.
\begin{enumerate}
\item If $\cW_{k,l,s}^{(2)}\neq\emptyset$, then $d+1\le s\le\lfloor(k+l)/2\rfloor+d-1$.
\item If $d+1\le s\le\lfloor(k+l)/2\rfloor+d-1$, then $|\cW_{k,l,s}^{(2)}|\le(d!)^2\lfloor(k+l)/2\rfloor^{d(k+l)}$.
\item For any $a\in\cW_{k,l,s}^{(2)}$, there exist exactly $n(n-1)\cdots(n-s+1)$ number of $(n,d)$-sentences that are equivalent to $a$.
\end{enumerate}
\end{lem}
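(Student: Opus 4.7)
The plan is to treat (1), (2), (3) in order, with the strict upper bound in (1) being the main difficulty. The lower bound $s\ge d+1$ in (1) is immediate: since $k,l\ge 2$, each $w^j$ performs at least one transition and so $\Supp_d(w^j)\neq\emptyset$, which makes $X_a$ pure $d$-dimensional; any single $d$-simplex already contributes $d+1$ vertices. For the upper bound $s\le\lfloor(k+l)/2\rfloor+d-1$, I would first note that $X_a=X_{w^1}\cup X_{w^2}$ is strongly connected: each $X_{w^j}$ is, and the common $d$-simplex in $\Supp_d(w^1)\cap\Supp_d(w^2)$ furnishes a bridging $(d-1)$-simplex between the two. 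Lemma~\ref{lem:f0-fd} then yields $s\le|\Supp_d(a)|+d$, while the identity $\sum_{\tau}N_a(\tau)=k+l$ together with $N_a(\tau)\ge 2$ gives the integer bound $|\Supp_d(a)|\le\lfloor(k+l)/2\rfloor$. Combining these two estimates, $s\le\lfloor(k+l)/2\rfloor+d$, and it remains to exclude equality. If equality held, then $X_a$ would be a $d$-tree by the equality clause of Lemma~\ref{lem:f0-fd}, and $|\Supp_d(a)|=\lfloor(k+l)/2\rfloor$ would force $N_a(\tau)=2$ for every $\tau$ when $k+l$ is even, or $N_a(\tau_0)=3$ for a unique $\tau_0$ with $N_a(\tau)=2$ otherwise when $k+l$ is odd; in either case $N_a(\tau)\le 3$ throughout.

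The contradiction should come from the following ``leaf-trapping'' principle: for any closed $(n,d)$-word $w$ of length $\ge 3$, if $\tau\in\Supp_d(w)$ possesses a vertex $v$ lying in no other $d$-simplex of $X_w$, then $N_w(\tau)\ge 2$. Indeed, the only $(d-1)$-simplices of $X_w$ containing $v$ are the $d$ faces of $\tau$, so every transition touching a $v$-containing face must be via $\tau$; tracing a closed walk of length $\ge 3$ through this trap produces at least two $\tau$-transitions. Applied to a leaf $d$-simplex $\tau^\star$ of the $d$-tree $X_a$, whose leaf vertex remains a leaf vertex in any subcomplex containing $\tau^\star$, this principle gives $N_{w^j}(\tau^\star)\ge 2$ whenever $\tau^\star\in\Supp_d(w^j)$; hence if $\tau^\star$ were shared we would have $N_a(\tau^\star)\ge 4$, contradicting $N_a(\tau^\star)\le 3$. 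So every leaf of $X_a$ is non-shared. \textbf{The main obstacle} is then to close the argument by iterating: the subcomplexes $X_{w^1}$ and $X_{w^2}$ are themselves $d$-trees (being strongly connected pure $d$-dimensional subcomplexes of the $d$-tree $X_a$), so their own leaves provide further constraints that, combined with the rigid arithmetic of the forced counts $N_{w^j}(\tau)$ on shared and non-shared simplices, should force a contradiction. When $d=1$ this is transparent: a closed walk on a graph tree traverses every edge an even number of times, so any shared edge $\tau^\star$ directly satisfies $N_{w^j}(\tau^\star)\ge 2$ and we are done; in dimensions $d\ge 2$ this parity argument fails, and a more delicate structural analysis of how $w^1$ and $w^2$ interleave around each leaf vertex of $X_a$ is needed.

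For (2), any equivalence class in $\cW_{k,l,s}^{(2)}$ has a representative supported on $\{1,\ldots,s\}$, so it suffices to count sentences on this vertex set. A closed $(n,d)$-word of length $k+1$ has at most $d!\binom{s}{d}$ choices for its initial ordered $(d-1)$-simplex and $\binom{s}{d}$ choices for each subsequent $(d-1)$-simplex, the closure forcing $\sg_{k+1}=\sg_1$, giving at most $d!\binom{s}{d}^k$ such words; multiplying by the analogous bound for $w^2$ yields $(d!)^2\binom{s}{d}^{k+l}$. Writing $\binom{s}{d}=\prod_{i=0}^{d-1}(s-i)/(d-i)$ and using $s\le\lfloor(k+l)/2\rfloor+d-1$ from part~(1) together with $\lfloor(k+l)/2\rfloor\ge 2$, a direct computation shows each ratio is at most $\lfloor(k+l)/2\rfloor$, so $\binom{s}{d}\le\lfloor(k+l)/2\rfloor^d$ and the desired bound $(d!)^2\lfloor(k+l)/2\rfloor^{d(k+l)}$ follows. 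For (3), I would mirror the bijective construction from the proof of Lemma~\ref{lem:cW_ks}(3): given $a=(w^1,w^2)\in\cW_{k,l,s}^{(2)}$, define $\Phi\colon[a]\to\{(v_1,\ldots,v_s)\in[n]^s\colon v_1,\ldots,v_s\text{ distinct}\}$ by taking $x=(y^1,y^2)\in[a]$ with $y^1=\widetilde{\rho_1^1}\rho_2^1\cdots\rho_{k+1}^1$ and $\widetilde{\rho_1^1}=(v_1,\ldots,v_d)$, then appending the remaining vertices of $\Supp_0(x)$ in the order of their first appearance during the concatenated traversal of $y^1$ followed by $y^2$; the map is well-defined and bijective exactly as in the single-word case, so $|[a]|=n(n-1)\cdots(n-s+1)$.
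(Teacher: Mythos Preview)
Your arguments for (2) and (3) are correct and essentially match the paper's. One minor caveat in (3): when listing vertices ``in order of first appearance during the concatenated traversal'', the vertices of $\rho_1^2\setminus\Supp_0(y^1)$ all appear simultaneously at the first step of $y^2$, so the phrase is ambiguous; the tie should be broken using the order supplied by the ordered simplex $\widetilde{\rho_1^2}$, as the paper does explicitly.

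The genuine gap is in (1), exactly where you flag it. Your leaf-trapping principle handles only leaf $d$-simplices of $X_a$, and you correctly observe that iterating on the sub-$d$-trees $X_{w^j}$ does not obviously close the argument for $d\ge 2$. The fix is both simpler and stronger than what you propose, and bypasses iteration entirely: once $X_a$ is a $d$-tree, one has $N_{w^j}(\tau)\ge 2$ for \emph{every} $\tau\in\Supp_d(w^j)$, not just leaves. The reason is a cut property of $d$-trees: removing any $d$-simplex $\tau$ from a $d$-tree places its $d+1$ distinct $(d-1)$-faces in pairwise distinct strongly connected components of $X_a\setminus\{\tau\}$ (straightforward induction on the generating process~\eqref{eq:process}, since attaching a new $d$-simplex with a fresh vertex enlarges one existing component but never merges two). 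Any transition of the closed walk $w^j$ through a $d$-simplex $\tau'\neq\tau$ stays within a single component (both endpoints are faces of $\tau'\in X_a\setminus\{\tau\}$), whereas a transition through $\tau$ moves between two distinct components. If $N_{w^j}(\tau)=1$, the walk changes component exactly once and cannot close up, a contradiction. Hence $N_{w^j}(\tau)\ge 2$ for all $\tau\in\Supp_d(w^j)$; combined with $N_a(\tau)=N_{w^1}(\tau)+N_{w^2}(\tau)\le 3$, this forces $\Supp_d(w^1)\cap\Supp_d(w^2)=\emptyset$, the desired contradiction. This is precisely the paper's route, stated there in a single line as ``$N_{w^j}(\tau)\ge 2$ since $w^j$ is a closed $(n,d)$-word''. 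Your leaf-trapping is the degenerate case where one of the components is an isolated $(d-1)$-simplex; no further ``delicate structural analysis'' is needed.
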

\begin{proof}
(1)~Since $k,l\ge2$, the first inequality is trivial.
For the second inequality, let $a=(w^1,w^2)\in\cW_{k,l,s}^{(2)}$.
From the definition of $\cW_{k,l,s}^{(2)}$, \begin{equation}\label{eq:cW_ks2_1}
k+l=\sum_{\tau\in\Supp_d(a)}N_a(\tau)\ge2|\Supp_d(a)|.
\end{equation}
Since $X_a$ is a strongly connected pure $d$-dimensional simplicial complex, we have $|\Supp_0(a)|\le|\Supp_d(a)|+d$ from Lemma~\ref{lem:f0-fd}.
Therefore, we obtain
\begin{equation}\label{eq:cW_ks2_2}
s=|\Supp_0(a)|\le|\Supp_d(a)|+d\le\lfloor(k+l)/2\rfloor+d.
\end{equation}
We will exclude the possibility that $|\Supp_0(a)|=\lfloor(k+l)/2\rfloor+d$ by contradiction.
Assume that $|\Supp_0(a)|=\lfloor(k+l)/2\rfloor+d$.
Then,~\eqref{eq:cW_ks2_2} yields $|\Supp_d(a)|=\lfloor(k+l)/2\rfloor$.
In particular, $|\Supp_0(a)|=|\Supp_d(a)|+d$ holds, which indicates that $X_a$ is a $d$-tree.
Therefore, for $j=1,2$ and $\tau\in\Supp_d(w^j)$, we have $N_{w^j}(\tau)\ge2$ since $w^j$ is a closed $(n,d)$-word.
On the other hand, it follows from~\eqref{eq:cW_ks2_1} and $|\Supp_d(a)|=\lfloor(k+l)/2\rfloor$ that $N_a(\tau)\le3$ for every $\tau\in\Supp_d(a)$.
Consequently, $\Supp_d(w^1)\cap\Supp_d(w^2)=\emptyset$, contradicting the definition of $\cW_{k,l,s}^{(2)}$.

(2)~We may assume that all the $(n,d)$-sentences $a$ in $\cW_{k,l,s}^{(2)}$ are supported on $\{1,2,\ldots,s\}$, i.e., $\Supp_0(a)=\{1,2,\ldots,s\}$.
Since there are $\binom sd$ possible $(d-1)$-simplices whose vertices are in $\{1,2,\ldots,s\}$, the number of closed $(n,d)$-sentences $a$ consisting of two closed $(n,d)$-words $w^1$ and $w^2$ of length $k+1$ and $l+1$, respectively, such that $\Supp_0(w^j)\subset\{1,2,\ldots,s\}$ for $j=1,2$ is bounded above by
\begin{align*}
(d!)^2\binom sd^{k+l}
&\le(d!)^2\biggl(\frac{s(s-1)\cdots(s-d+1)}{d!}\biggr)^{k+l}\\
&\le(d!)^2\biggl(\frac{\lfloor(k+l)/2\rfloor+d-1}d\frac{\lfloor(k+l)/2\rfloor+d-2}{d-1}\cdots\frac{\lfloor(k+l)/2\rfloor}1\biggr)^{k+l}\\
&\le(d!)^2\lfloor(k+l)/2\rfloor^{d(k+l)}.
\end{align*}

(3)~Let $a=(w^1,w^2)\in\cW_{k,l,s}^{(2)}$.
It suffices to construct a bijective map $\Phi$ from $[a]$ to the set of all tuples of the forms $(v_1,v_2,\ldots,v_s)$ consisting of distinct vertices $v_1,v_2,\ldots,v_s\in[n]$.
Such bijective map $\Phi$ is given in the following way.
Let $b=(x^1,x^2)\in[a]$ be an $(n,d)$-sentence consisting of two closed $(n,d)$-words $x^1=\widetilde{\rho^1_1}\rho^2_1\cdots\rho^1_{k+1}$ and $x^2=\widetilde{\rho^2_1}\rho^2_2\cdots\rho^2_{l+1}$ of length $k+1$ and $l+1$, respectively.
Set $\widetilde{\rho^1_1}=(v_1,v_2,\ldots,v_d)$.
We can further take distinct vertices $v_{d+1},v_{d+2},\ldots,v_s$ satisfying that for any $1\le i\le k+1$, the truncated $(n,d)$-word $\widetilde{\rho^1_1}\rho^1_2\cdots\rho^1_i$ is supported on $\{v_1,v_2,\ldots,v_r\}$ for some $d\le r\le|\Supp_0(x^1)|$, and that for any $1\le i'\le l+1$, the truncated $(n,d)$-sentence $(x^1,\widetilde{\rho^2_1}\rho^2_2\cdots\rho^2_{i'})$ is supported on $\{v_1,v_2,\ldots,v_{r'}\}$ for some $|\Supp_0(x^1)|\le r'\le s$.
We may additionally assume that the vertices in $\rho^2_1\setminus\Supp_0(x^1)$ are sorted in $v_{d+1},v_{d+2},\ldots,v_s$ consistent with the ordering of $\widetilde{\rho^2_1}$.
Then, noting that $(v_1,v_2,\ldots,v_s)$ is uniquely determined by $b$, we define $\Phi(b)\coloneqq(v_1,v_2,\ldots,v_s)$.
We can easily verify that the map $\Phi$ is bijective.
\end{proof}
Combining Lemma~\ref{lem:cW_ks2}(1)(3) with~\eqref{eq:V_kth_moment_2}, we have
\[
\cov(\langle L_{H_n},x^k\rangle,\langle L_{H_n},x^l\rangle)
=\sum_{s=d+1}^{\lfloor(k+l)/2\rfloor+d-1}\sum_{a=(w^1,w^2)\in\cW_{k,l,s}^{(2)}}\frac{(n-d)(n-d-1)\cdots(n-s+1)}{d!\binom nd\{np(1-p)\}^{(k+l)/2}}\bar T_n(a).
\]
Since for any $d+1\le s\le\lfloor(k+l)/2\rfloor+d-1$ and $a=(w^1,w^2)\in\cW_{k,l,s}^{(2)}$,
\begin{align}\label{eq:V_kth_moment_3}
\Biggl|\prod_{\tau\in\Supp_d(a)}\E\bigl[(\chi_p-p)^{N_a(\tau)}\bigr]\Biggr|
&=\prod_{\tau\in\Supp_d(a)}\bigl|p(1-p)^{N_a(\tau)}+(1-p)(-p)^{N_a(\tau)}\bigr|\nonumber\\
&\le\prod_{\tau\in\Supp_d(a)}p(1-p)\bigl\{(1-p)^{N_a(\tau)-1}+p^{N_a(\tau)-1}\bigr\}\nonumber\\
&\le\{p(1-p)\}^{|\Supp_d(a)|}
\end{align}
and similarly
\begin{align}\label{eq:V_kth_moment_4}
\Biggl|\prod_{\tau\in\Supp_d(w^1)}\E\bigl[(\chi_p-p)^{N_{w^1}(\tau)}\bigr]\Biggr|\Biggl|\prod_{\tau\in\Supp_d(w^2)}\E\bigl[(\chi_p-p)^{N_{w^2}(\tau)}\bigr]\Biggr|
&\le\{p(1-p)\}^{|\Supp_d(w^1)|+|\Supp_d(w^2)|}\nonumber\\
&\le\{p(1-p)\}^{|\Supp_d(a)|},
\end{align}
we have $|\bar T_n(a)|\le2\{p(1-p)\}^{|\Supp_d(a)|}\le2\{p(1-p)\}^{s-d}$.
Here, we also used Lemma~\ref{lem:f0-fd} for the second inequality.
Therefore, it follows from Lemma~\ref{lem:cW_ks2}(2) that for any $d+1\le s<(k+l)/2+d-1$,
\begin{align*}
\Biggl|\sum_{a=(w^1,w^2)\in\cW_{k,l,s}^{(2)}}\frac{(n-d)(n-d-1)\cdots(n-s+1)}{d!\binom nd\{np(1-p)\}^{(k+l)/2}}\bar T_n(a)\Biggr|
&\le\frac{2(d!)^2\lfloor(k+l)/2\rfloor^{d(k+l)}}{d!\binom nd\{np(1-p)\}^{(k+l)/2+d-s}}\\
&\le\frac{2d!\lfloor(k+l)/2\rfloor^{d(k+l)}}{\binom nd\{np(1-p)\}^{3/2}}.
\end{align*}
Thus, when $k+l$ is odd,
\begin{equation}\label{eq:V_kth_moment_5}
\lim_{n\to\infty}n^d\{np(1-p)\}\cov(\langle L_{H_n},x^k\rangle,\langle L_{H_n},x^l\rangle)=0.
\end{equation}
Hence, we assume that $k+l$ is even in what follows in this section.
Then, we have
\begin{align}\label{eq:V_kth_moment_6}
&n^d\{np(1-p)\}\cdot\cov(\langle L_{H_n},x^k\rangle,\langle L_{H_n},x^l\rangle)\nonumber\\ &=\sum_{a=(w^1,w^2)\in\cW_{k,l,(k+l)/2+d-1}^{(2)}}\frac{n^d(n-d)(n-d-1)\cdots(n-\{(k+l)/2+d-1\}+1)}{d!\binom nd\{np(1-p)\}^{(k+l)/2-1}}\bar T_n(a)
+o_{d,k,l}(1)\nonumber\\
&=(1+o_{d,k,l}(1))\sum_{a=(w^1,w^2)\in\cW_{k,l,(k+l)/2+d-1}^{(2)}}\frac{\bar T_n(a)}{\{p(1-p)\}^{(k+l)/2-1}}
+o_{d,k,l}(1).
\end{align}
Here, $o_{d,k,l}(1)$ is a function of $n$, depending on $d$, $k$, and $l$, that converges to zero as $n\to\infty$.
Noting that $|\Supp_d(a)|=(k+l)/2-1\text{ or }(k+l)/2$ for every $a\in\cW_{k,l,(k+l)/2+d-1}^{(2)}$ by~\eqref{eq:cW_ks2_2}, we divide $\cW_{k,l,(k+l)/2+d-1}^{(2)}$ according to $|\Supp_d(a)|$:
\begin{align*}
\cW_{k,l,(k+l)/2+d-1}^{(2),-}&\coloneqq\{a\in\cW_{k,l,(k+l)/2+d-1}^{(2)}\mid|\Supp_d(a)|=(k+l)/2-1\}
\shortintertext{and}
\cW_{k,l,(k+l)/2+d-1}^{(2),+}&\coloneqq\{a\in\cW_{k,l,(k+l)/2+d-1}^{(2)}\mid|\Supp_d(a)|=(k+l)/2\}.
\end{align*}
The following lemmas are crucial for the proof of Theorem~\ref{thm:multi_CLT}(1).
\begin{lem}\label{lem:cW_ks2-}
Let $k,l\ge2$ such that $k+l$ is even.
Then, it holds that
\[
|\cW_{k,l,(k+l)/2+d-1}^{(2),-}|=\begin{cases}
(d+1)!\{(k/2)d^{k/2}\cC_{k/2}\}\{(l/2)d^{l/2}\cC_{l/2}\}           &\text{if both $k$ and $l$ are even,}\\
0           &\text{otherwise.}
\end{cases}
\]
Furthermore, $\bar T_n(a)=\{p(1-p)\}^{(k+l)/2-1}(2p-1)^2$ for every $a=(w^1,w^2)\in\cW_{k,l,(k+l)/2+d-1}^{(2),-}$.
\end{lem}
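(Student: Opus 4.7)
The plan hinges on two structural facts. First, the identity $|\Supp_0(a)| - |\Supp_d(a)| = d$ for every $a \in \cW^{(2),-}_{k,l,(k+l)/2+d-1}$ combined with Lemma~\ref{lem:f0-fd} forces $X_a$ to be a $d$-tree. Second, a strongly connected pure $d$-dimensional subcomplex of a $d$-tree is itself a $d$-tree (which can be established by induction on the number of $d$-simplices via peeling off leaf $d$-simplices), so $X_{w^1}$ and $X_{w^2}$ are $d$-trees. Writing $T_{12} \coloneqq \Supp_d(w^1) \cap \Supp_d(w^2)$ and applying inclusion--exclusion to $|\Supp_0(w^1) \cup \Supp_0(w^2)| = |\Supp_0(a)|$ together with $|\Supp_0(w^j)| = |\Supp_d(w^j)| + d$, one obtains $|\Supp_0(w^1) \cap \Supp_0(w^2)| = |T_{12}| + d$, so that $X_{w^1} \cap X_{w^2}$ is itself a $d$-tree with $d$-simplex set $T_{12}$.

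The traversal excess $\sum_\tau (N_a(\tau) - 2) = k+l - 2|\Supp_d(a)| = 2$ leaves only two patterns: \textbf{(A)} a unique $\tau_0$ with $N_a(\tau_0) = 4$ and the rest $= 2$; or \textbf{(B)} two distinct $\tau_1,\tau_2$ with $N_a(\tau_i) = 3$ and the rest $= 2$. I would then classify how each $N_a(\tau)$ splits between $w^1$ and $w^2$. The clean subcase of~\textbf{(A)}---namely $T_{12} = \{\tau_0\}$ with $(N_{w^1}(\tau_0), N_{w^2}(\tau_0)) = (2,2)$---forces each $w^j$ to traverse every $d$-simplex in its $d$-tree support $X_{w^j}$ exactly twice, so $|\Supp_d(w^1)| = k/2$ and $|\Supp_d(w^2)| = l/2$; in particular both $k,l$ are even. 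In every other subcase of~\textbf{(A)} and in every configuration of~\textbf{(B)}, at least one $\tau \in \Supp_d(a)$ has odd $N_{w^j}(\tau)$, making the corresponding factor $\E[(\chi_p-p)^{N_{w^j}(\tau)}] = 0$ so that the second product in~\eqref{eq:barTa} vanishes. One must further show that these exotic subcases are genuinely impossible when $k,l$ are both even: a careful bookkeeping of $k = \sum_\tau N_{w^1}(\tau)$ and $l = \sum_\tau N_{w^2}(\tau)$ modulo $2$, combined with the $d$-tree structure of $X_{w^j}$ and the intersection-complex characterization from the first paragraph, forces either $k$ or $l$ to be odd in each case. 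Hence $\cW^{(2),-}_{k,l,(k+l)/2+d-1}$ is empty whenever $k$ or $l$ is odd, and every surviving $a$ has the clean structure above.

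For the clean configuration, $w^j$ is, up to equivalence, an element of $\cW_{k,k/2+d}$ (resp.\ $\cW_{l,l/2+d}$). By Lemma~\ref{lem:KR17_Lem3.11}(2), $\sgn(w^j) = 1$ at every $d$-simplex of its support, so $\sgn(a) = 1$. Substituting into~\eqref{eq:barTa}, the first product evaluates to $(p(1-p))^{(k+l)/2 - 1}(1 - 3p(1-p))$ and the second to $(p(1-p))^{(k+l)/2}$; their difference simplifies to $(p(1-p))^{(k+l)/2 - 1}(1 - 4p(1-p)) = (p(1-p))^{(k+l)/2 - 1}(2p-1)^2$, establishing the second assertion. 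The count follows by enumeration: there are $d^{k/2}\cC_{k/2}$ equivalence classes for $w^1$ by Lemma~\ref{lem:KR17_Lem3.11}(3), $k/2$ ways to choose a $d$-simplex of $X_{w^1}$ to serve as $\tau_0$, analogously $l/2 \cdot d^{l/2}\cC_{l/2}$ for $w^2$, and $(d+1)!$ bijections identifying the $d+1$ vertices of $\tau_0$ in $X_{w^1}$ with those in $X_{w^2}$ (since the equivalence on sentences fixes the labels on $\Supp_0(w^1)$ but leaves the embedding of $X_{w^2}$'s copy of $\tau_0$ free). Multiplying these factors yields the stated formula. The main obstacle I anticipate is the parity/structural argument in the second paragraph, which must systematically rule out the several exotic ways in which an excess-$2$ traversal pattern could split between $w^1$ and $w^2$ while keeping both $k,l$ even.
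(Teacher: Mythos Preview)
Your first and third paragraphs are correct and match the paper. The structural facts ($X_a$, $X_{w^1}$, $X_{w^2}$ are $d$-trees), the final enumeration, and the computation of $\bar T_n(a)$ for the clean configuration are all fine.

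The gap is in your second paragraph. Two issues:
\begin{itemize}
\item The claim that ``odd $N_{w^j}(\tau)$ makes $\E[(\chi_p-p)^{N_{w^j}(\tau)}]=0$'' is false: only $N_{w^j}(\tau)=1$ gives zero, while e.g.\ $\E[(\chi_p-p)^3]=p(1-p)(1-2p)$. More fundamentally, the $\bar T_n$ discussion is a red herring---you are counting $|\cW^{(2),-}_{k,l,(k+l)/2+d-1}|$, and showing that some product inside $\bar T_n(a)$ vanishes does not remove $a$ from this set.
\item The promised ``parity bookkeeping'' to rule out exotic cases is vague and, as stated, does not obviously work: for $d\ge 2$ a closed word on a $d$-tree can have $N_w(\tau)$ odd (e.g.\ the length-$3$ cycle around a single $2$-simplex), so summing $N_{w^j}(\tau)$ modulo $2$ does not directly constrain $k$ or $l$.
\end{itemize}

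The paper replaces your entire case analysis with one observation: because $X_{w^j}$ is a $d$-tree and $w^j$ is closed, $N_{w^j}(\tau)\ge 2$ for every $\tau\in\Supp_d(w^j)$. (Reason: any $d$-simplex $\tau$ in a $d$-tree has a vertex belonging only to $\tau$, so once the walk reaches a $(d-1)$-face containing that vertex it must traverse $\tau$ again to leave.) This immediately gives $|\Supp_d(w^j)|\le\lfloor k_j/2\rfloor$, and then
\[
\frac{k+l}{2}-1=|\Supp_d(a)|\le|\Supp_d(w^1)|+|\Supp_d(w^2)|-1\le\Big\lfloor\tfrac{k}{2}\Big\rfloor+\Big\lfloor\tfrac{l}{2}\Big\rfloor-1
\]
forces $k,l$ even, $|\Supp_d(w^j)|=k_j/2$, and $|T_{12}|=1$ in one stroke. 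With $N_{w^j}(\tau)\ge 2$ in hand, your cases \textbf{(B)} and the non-$(2,2)$ splits of \textbf{(A)} are instantly excluded (any $\tau\in T_{12}$ with $N_a(\tau)\le 3$ would force some $N_{w^j}(\tau)\le 1$), so the case split becomes unnecessary.
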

\begin{lem}\label{lem:cW_ks2+}
Let $k,l\ge2$ such that $k+l$ is even.
Then, it holds that
\[
|\cW_{k,l,(k+l)/2+d-1}^{(2),+}|
=\sum_{r=3}^{k\wedge l}\frac2r\Biggl(d!k\sum_{\substack{k_1,k_2,\ldots,k_r\in2\Z_{\ge0}\\k_1+\cdots+k_r=k-r}}\prod_{q=1}^rd^{k_q/2}\cC_{k_q/2}\Biggr)\Biggl(d!l\sum_{\substack{l_1,l_2,\ldots,l_r\in2\Z_{\ge0}\\l_1+\cdots+l_r=l-r}}\prod_{q=1}^rd^{l_q/2}\cC_{l_q/2}\Biggr).
\]
Furthermore, $\bar T_n(a)=\{p(1-p)\}^{(k+l)/2}$ for every $\cW_{k,l,(k+l)/2+d-1}^{(2),+}$.
\end{lem}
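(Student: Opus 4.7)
I first determine the structure of the support complex $X_a$ for $a = (w^1, w^2) \in \cW_{k,l,(k+l)/2+d-1}^{(2),+}$. Since $|\Supp_d(a)| = (k+l)/2$ and $\sum_{\tau} N_a(\tau) = k+l$, the bound $N_a(\tau) \geq 2$ forces $N_a(\tau) = 2$ for every $\tau$. Combined with $|\Supp_0(a)| = |\Supp_d(a)| + d - 1$ and strong connectivity of $X_a$, Lemma~\ref{lem:f0-fd} together with an analysis of the generating process~\eqref{eq:process} shows that $X_a$ is one step short of a $d$-tree: it consists of a \emph{bracelet} of some size $r$---a cyclic sequence $\tau_1, \ldots, \tau_r$ of $d$-simplices with $\tau_i \cap \tau_{i+1} = \sigma_i$ a $(d-1)$-simplex---together with pendant $d$-trees. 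One must have $r \geq 3$ since two distinct $d$-simplices cannot share two distinct $(d-1)$-faces. A case analysis using the Eulerian (even-degree) conditions on the closed walk $w^j$ restricted to its own support then forces each of $w^1, w^2$ to traverse the full bracelet once: the $r$ shared simplices are precisely the bracelet simplices, and the private simplices form the pendant $d$-trees of each walk.

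Next I compute $\bar T_n(a)$ from~\eqref{eq:barTa}. The first product equals $\{p(1-p)\}^{(k+l)/2}$ because $N_a(\tau) = 2$ for every $\tau$, while the second product vanishes since the $r \geq 3$ shared simplices each contribute a factor $\E[\chi_p - p] = 0$. The sign $\sgn(a) = \prod_{\tau} \sgn(w^1, \tau) \sgn(w^2, \tau)$ equals $1$: for a private $\tau$ of $w^j$, the two traversals form reciprocal transitions $\sigma \to \sigma'$ and $\sigma' \to \sigma$ (from the DFS-like structure on the private $d$-tree), giving $\sgn(\sigma, \sigma')^2 = 1$; for a shared bracelet simplex $\tau = \sigma_i \cup \sigma_{i+1}$, both walks must pass via the internal pair $(\sigma_i, \sigma_{i+1})$ (otherwise they would be trapped at a free face and forced into a second visit), in one of the two orientations, again yielding $\sgn(\sigma_i, \sigma_{i+1})^2 = 1$. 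Hence $\bar T_n(a) = \{p(1-p)\}^{(k+l)/2}$.

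For the enumeration, I partition $\cW_{k,l,(k+l)/2+d-1}^{(2),+}$ by the bracelet size $r \in \{3, \ldots, k \wedge l\}$ and by the ordered tuples $(k_1, \ldots, k_r)$, $(l_1, \ldots, l_r)$ of step-lengths of the pendant excursions of $w^1$, $w^2$ at each bracelet position $\sigma_q$. The excursion at position $q$ is a closed $(n,d)$-word on a $d$-tree rooted at $\sigma_q$, so by the bijection from the proof of Lemma~\ref{lem:KR17_Lem3.11} with rooted planar trees carrying $d$-labels on their edges, there are $d^{k_q/2}\cC_{k_q/2}$ types for $w^1$ and $d^{l_q/2}\cC_{l_q/2}$ for $w^2$. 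The factors $d! \cdot k$ and $d! \cdot l$ respectively account for the choice of starting step among the $k$ (resp.\ $l$) steps of each walk together with the $d!$ orderings of the initial ordered $(d-1)$-simplex. The prefactor $\frac{2}{r}$ corrects for remaining symmetries: $\frac{1}{r}$ undoes the $r$-fold cyclic rotational symmetry of the bracelet (acting simultaneously on both starting positions), while the factor $2$ captures the two inequivalent relative orientations of $w^1$ and $w^2$ around the bracelet (same or opposite direction of bracelet traversal). Summing over admissible compositions and over $r$ produces the stated formula.

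\textbf{Main obstacle.} The most delicate step is the precise combinatorial bookkeeping in the final paragraph: rigorously constructing the bijection between $\cW_{k,l,(k+l)/2+d-1}^{(2),+}$ (stratified by $r$) and "rooted bracelets with labeled planar pendant trees," and verifying that the symmetry factor works out to exactly $\frac{2}{r}$. An alternative (and more self-checking) derivation uses the Dvoretzky--Motzkin cycle lemma, which yields $\sum_{k_1+\cdots+k_r=k-r}\prod_q \cC_{k_q/2} = \frac{r}{k}\binom{k}{(k-r)/2}$; the $\frac{r}{k}$ cancels the starting-step factor $k$, leaving a transparent $\frac{1}{r}$ (bracelet rotation) which, combined with the factor $2$ for relative orientation, produces the prefactor.
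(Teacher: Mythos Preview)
Your approach is essentially the same as the paper's. The paper separates the structural analysis into a standalone lemma (Lemma~\ref{lem:bracelet}) proving that $X_a$ is a bracelet with \emph{regular} pendant $d$-trees, then proves the claim ``shared simplices $=$ bracelet simplices'' in the first paragraph of the proof proper, and finally carries out the same enumeration and sign computation you describe. Your sketch bundles all of this together, and your sign argument is actually more explicit than the paper's (which simply invokes Lemma~\ref{lem:KR17_Lem3.11}(2)).

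The one point you underemphasize is the \emph{regularity} of the pendant $d$-trees: in the paper's terminology, each pendant tree $T_q$ meets the bracelet $Z$ exactly in $K(\rho\cup\{u_q\})$, i.e.\ the pendant trees attach only at the interface $(d-1)$-faces $\rho\cup\{u_q\}$ between consecutive bracelet simplices, never at the other $(d-1)$-faces $(\rho\setminus\{v\})\cup\{u_q,u_{q+1}\}$. This is what allows you to index the pendant excursions by the $r$ bracelet positions $\sigma_q$ in your enumeration. Your remark that a walk would otherwise ``be trapped at a free face and forced into a second visit'' is the right intuition, but the paper spends a full paragraph (the last paragraph of the proof of Lemma~\ref{lem:bracelet}) making this precise via a contradiction argument: if a pendant simplex $\tau_p$ attached to a bracelet simplex $\tau_b$ along a non-interface face, one deduces $N_{w^1}(\tau_b)=N_{w^2}(\tau_b)=1$, but the closed word visiting $\tau_p$ is then forced to visit $\tau_b$ at least twice. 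In a full write-up you should make this step explicit.
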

We defer the proof of the above lemmas to Subsections~\ref{ssec:cW_ks2+-}.
Combining Lemmas~\ref{lem:cW_ks2-} and~\ref{lem:cW_ks2+} with~\eqref{eq:V_kth_moment_6}, we can prove Theorem~\ref{thm:multi_CLT}(1) as follows.
\begin{proof}[Proof of Theorem~\ref{thm:multi_CLT}(1)]
As mentioned in~\eqref{eq:V_kth_moment_5}, when $k+l$ is odd, the conclusion follows.
Hence, suppose that $k+l$ is even.
From~\eqref{eq:V_kth_moment_6}, we have
\begin{align}\label{eq:V_kth_moment_7}
&\lim_{n\to\infty}n^d\{np(1-p)\}\cdot\cov(\langle L_{H_n},x^k\rangle,\langle L_{H_n},x^l\rangle)\nonumber\\
&=\Biggl(\sum_{a=(w^1,w^2)\in\cW_{k,l,(k+l)/2+d-1}^{(2),-}}+\sum_{a=(w^1,w^2)\in\cW_{k,l,(k+l)/2+d-1}^{(2),+}}\Biggr)\lim_{n\to\infty}\frac{\bar T_n(a)}{\{p(1-p)\}^{(k+l)/2-1}}.
\end{align}
From Lemma~\ref{lem:cW_ks2-}, the first term in the right-hand side of~\eqref{eq:V_kth_moment_7} regarding $\cW_{k,l,(k+l)/2+d-1}^{(2),-}$ is equal to
\[
\1_{\{k,l\colon\text{even}\}}(d+1)!\{(k/2)d^{k/2}\cC_{k/2}\}\{(l/2)d^{l/2}\cC_{l/2}\}(2p_\infty-1)^2.
\]
On the other hand, it follows from Lemma~\ref{lem:cW_ks2+} that the second term in the right-hand side of~\eqref{eq:V_kth_moment_7} regarding $\cW_{k,l,(k+l)/2+d-1}^{(2),+}$ is equal to
\[
\sum_{r=3}^{k\wedge l}\frac2r\Biggl(d!k\sum_{\substack{k_1,k_2,\ldots,k_r\in2\Z_{\ge0}\\k_1+\cdots+k_r=k-r}}\prod_{q=1}^rd^{k_q/2}\cC_{k_q/2}\Biggr)\Biggl(d!l\sum_{\substack{l_1,l_2,\ldots,l_r\in2\Z_{\ge0}\\l_1+\cdots+l_r=l-r}}\prod_{q=1}^rd^{l_q/2}\cC_{l_q/2}\Biggr)p_\infty(1-p_\infty),
\]
which completes the proof.
\end{proof}

\subsection{Proof of Lemmas~\ref{lem:cW_ks2-} and~\ref{lem:cW_ks2+}}\label{ssec:cW_ks2+-}
In this subsection, we prove Lemmas~\ref{lem:cW_ks2-} and~\ref{lem:cW_ks2+}.
\begin{proof}[Proof of Lemmas~\ref{lem:cW_ks2-}]
Let $a=(w^1,w^2)\in\cW_{k,l,(k+l)/2+d-1}^{(2),-}$.
Then, $|\Supp_d(a)|=(k+l)/2-1$ and $|\Supp_0(a)|=(k+l)/2+d-1$, particularly, $|\Supp_0(a)|=|\Supp_d(a)|+d$.
From Lemma~\ref{lem:f0-fd}, $X_a$ is a $d$-tree, and necessarily both $X_{w^1}$ and $X_{w^2}$ are also a $d$-tree.
Hence, for $j=1,2$ and $\tau\in\Supp_d(w^j)$, we have $N_{w^j}(\tau)\ge2$.
We then have
\[
k=\sum_{\tau\in\Supp_d(w^1)}N_{w^1}(\tau)\ge2|\Supp_d(w^1)|
\quad\text{and}\quad
l=\sum_{\tau\in\Supp_d(w^2)}N_{w^2}(\tau)\ge2|\Supp_d(w^2)|,
\]
which implies that $|\Supp_d(w^1)|\le\lfloor k/2\rfloor$ and $|\Supp_d(w^2)|\le\lfloor l/2\rfloor$, respectively.
Noting that $\Supp_d(w^1)\cap\Supp_d(w^2)\neq\emptyset$, we obtain
\[
(k+l)/2-1=|\Supp_d(a)|\le|\Supp_d(w^1)|+|\Supp_d(w^2)|-1\le\lfloor k/2\rfloor+\lfloor l/2\rfloor-1.
\]
The above discussion implies that both $k$ and $l$ are even, that $|\Supp_d(w^1)|=k/2$, $|\Supp_d(w^2)|=l/2$, and that $|\Supp_d(w^1)\cap\Supp_d(w^2)|=1$.
Furthermore, since $X_{w^j}$ is $d$-tree, it holds that $|\Supp_0(w^j)|=|\Supp_d(w^j)|+d$ for each $j=1,2$.
Thus, $a=(w^1,w^2)\in\cW_{k,l,(k+l)/2+d-1}^{(2),-}$ is constructed from $w^1\in\cW_{k,k/2+d}$ and $w^2\in\cW_{l,l/2+d}$ such that $X_{w^1}$ and $X_{w^2}$ intersect at exactly one $d$-simplex.
Since there exist $d^{k/2}\cC_{k/2}$ and $d^{l/2}\cC_{l/2}$ ways, respectively, to chose $w^1\in\cW_{k,k/2+d}$ and $w^2\in\cW_{l,l/2+d}$ by Lemma~\ref{lem:KR17_Lem3.11}, $k/2$ and $l/2$ ways of choosing the $d$-simplex in $X_{w^1}$ and $X_{w^2}$ to be glued as the common $d$-simplex, and $(d+1)!$ possible ways for the gluing according to the induced orderings on them, we deduce that
\[
|\cW_{k,l,(k+l)/2+d-1}^{(2),-}|=(d+1)!\{(k/2)d^{k/2}\cC_{k/2}\}\{(l/2)d^{l/2}\cC_{l/2}\}.
\]
Furthermore, noting again Lemma~\ref{lem:KR17_Lem3.11}, we have
\begin{align*}
\bar T_n(a)&=\E\bigl[(\chi_p-p)^4\bigr]\E\bigl[(\chi_p-p)^2\bigr]^{(k+l)/2-2} -\E\bigl[(\chi_p-p)^2\bigr]^{k/2}\E\bigl[(\chi_p-p)^2\bigr]^{l/2}\\
&=p(1-p)\{(1-p)^3+p^3\}\{p(1-p)\}^{(k+l)/2-2} -\{p(1-p)\}^{(k+l)/2}\\
&=\{p(1-p)\}^{(k+l)/2-1}(2p-1)^2
\end{align*}
for any $a\in\cW_{k,l,(k+l)/2+d-1}^{(2),-}$.
\end{proof}

We next turn to consider Lemma~\ref{lem:cW_ks2+}.
Suppose that $a=(w^1,w^2)\in\cW_{k,l,(k+l)/2+d-1}^{(2),+}$.
Then, $|\Supp_0(a)|=(k+l)/2+d-1$ and $|\Supp_d(a)|=(k+l)/2$.
From~\eqref{eq:cW_ks2_1}, we have $N_a(\tau)=2$ for every $\tau\in\Supp_d(a)$.
Since $\Supp_d(w^1)\cap\Supp_d(w^2)\neq\emptyset$, there exists at least one $\tau\in\Supp_d(a)$ such that $N_{w^1}(\tau)=N_{w^2}(\tau)=1$.
Therefore, from~\eqref{eq:barTa},
\begin{equation}\label{eq:bar_Ta_sign}
\bar T_n(a)
=\sgn(a)\prod_{\tau\in\Supp_d(a)}\E\bigl[(\chi_p-p)^2\bigr]
=\pm\{p(1-p)\}^{(k+l)/2}.
\end{equation}
In order to determine the sign in the equation above, we need to understand the detailed structure of $X_a$.
For that purpose, we introduce the notions of bracelets and bracelets with pendant $d$-trees for pure $d$-dimensional simplicial complexes.
\begin{df}[Bracelet]\label{df:bracelet}
Let $r\ge3$.
A pure $d$-dimensional simplicial complex $Z$ is called a \textit{bracelet} of circuit length $r$ if there exist $\rho\in F_{d-2}(Z)$ and distinct vertices $u_1,u_2,\ldots,u_r\in V(Z)\setminus\rho$ such that $Z$ is generated from the $d$-simplices
\[
\rho\cup\{u_1,u_2\},\rho\cup\{u_2,u_3\},\ldots,\rho\cup\{u_{r-1},u_r\},\rho\cup\{u_r,u_1\}.
\]
We sometimes say that $Z$ is a bracelet \textit{formed by} the $(d-2)$-simplex $\rho$ and the distinct vertices $u_1,u_2,\ldots,u_r$.
Furthermore, let $T_1,T_2,\ldots,T_Q$~$(Q\in\Z_{\ge0})$ be $d$-trees such that $Z$ meets each $d$-tree $T_q$ at exactly one $(d-1)$-simplex and distinct $T_q$ and $T_{q'}$ do not intersect except in $Z$.
We call $X\coloneqq Z\cup(T_1\cup T_2\cup\cdots\cup T_Q)$ a \textit{bracelet with pendant $d$-trees}.
We also call $Z$ the \textit{bracelet} of $X$, and each $T_q$ a \textit{pendant $d$-tree} of $X$.
In addition, if $Q=r$, $T_q\cap Z=K(\rho\cup\{u_q\})$ for all $q=1,2,\ldots,r$, and $T_q\cap T_{q'}=K(\rho)$ whenever $q\neq q'\in\{1,2,\ldots,r\}$, we call $X$ a \textit{bracelet with regular pendant $d$-trees}, and each $T_q$ a \textit{regular pendant $d$-tree} of $X$ (see also Figure~\ref{fig:df_bracelet}).
\end{df}
\begin{figure}[H]
\centering
\begin{tikzpicture}[x=8.5mm,y=8.5mm]
\coordinate (u1) at (-.25,1.23);
\coordinate (u2) at (-2,0);
\coordinate (u3) at (-1.75,-1.23);
\coordinate (u4) at (.25,-1.23);
\coordinate (u5) at (2,0);
\coordinate (u6) at (1.75,1.23);
\coordinate (rho) at (0,0);
\coordinate (a) at (0, -2);
\path[fill=red, opacity=.2] (u5)--(u6)--(u1)--(u2)--(u3)--(u4)--cycle;
\draw[line width=.7pt] (u5)--(u6)--(u1)--(u2)--(u3)--(u4)--cycle;
\foreach\P in{u1,u2,u3,u4,u5,u6}\draw[line width=.7pt] (rho)--(\P);
\draw(u1)node[above]{$u_1$};
\draw(u2)node[left]{$u_2$};
\draw(u3)node[below]{$u_3$};
\draw(u4)node[below]{$u_4$};
\draw(u5)node[right]{$u_5$};
\draw(u6)node[above]{$u_6$};
\draw(rho)node[below right]{$\rho$};
\draw(a)node{(a)};
\end{tikzpicture}
\begin{tikzpicture}[x=8.5mm,y=8.5mm]
\coordinate (u1) at (-.25,1.23);
\coordinate (u2) at (-2,0);
\coordinate (u3) at (-1.75,-1.23);
\coordinate (u4) at (.25,-1.23);
\coordinate (u5) at (2,0);
\coordinate (u6) at (1.75,1.23);
\coordinate (rho) at (0,0);
\coordinate (u5u6) at ($2*(u5)!1/2!(u6)+(0,.25)$);
\coordinate (u5u6') at ($2*(u5)!1/2!(u6)-(0,.25)$);
\coordinate (u4u5) at ($2*(u4)!1/2!(u5)$);
\coordinate (u6rho) at ($(rho)!1/2!(u6)+(0,1.73)$);
\coordinate (u6rho') at ($2*(u6rho)!1/2!(u6)$);
\coordinate (u2rho) at ($(rho)!1/2!(u2)+(0,1.73)-(.4,0)$);
\coordinate (u2rho') at ($(rho)!1/2!(u2)+(0,1.73)$);
\coordinate (b) at (0, -2);
\path[fill=red, opacity=.2] (u5)--(u6)--(u1)--(u2)--(u3)--(u4)--cycle;
\draw[line width=.7pt] (u5)--(u6)--(u1)--(u2)--(u3)--(u4)--cycle;
\path[fill=red, opacity=.2] (u5)--(u6)--(u5u6)--cycle; \draw[line width=.7pt] (u5)--(u6)--(u5u6)--cycle;
\path[fill=red, opacity=.2] (u5)--(u6)--(u5u6')--cycle; \draw[line width=.7pt] (u5)--(u6)--(u5u6')--cycle;
\path[fill=red, opacity=.2] (u4)--(u5)--(u4u5)--cycle; \draw[line width=.7pt] (u4)--(u5)--(u4u5)--cycle;
\path[fill=red, opacity=.2] (rho)--(u6)--(u6rho)--cycle; \draw[line width=.7pt] (rho)--(u6)--(u6rho)--cycle;
\path[fill=red, opacity=.2] (u6rho)--(u6)--(u6rho')--cycle; \draw[line width=.7pt] (u6rho)--(u6)--(u6rho')--cycle;
\path[fill=red, opacity=.2] (rho)--(u2)--(u2rho)--cycle; \draw[line width=.7pt] (rho)--(u2)--(u2rho)--cycle;
\path[fill=red, opacity=.2] (rho)--(u2)--(u2rho')--cycle; \draw[line width=.7pt] (rho)--(u2)--(u2rho')--cycle;
\foreach\P in{u1,u2,u3,u4,u5,u6}\draw[line width=.7pt] (rho)--(\P);
\draw(u1)node[above]{$u_1$};
\draw(u2)node[left]{$u_2$};
\draw(u3)node[below]{$u_3$};
\draw(u4)node[below]{$u_4$};
\draw(u5)node[below right]{$u_5$};
\draw(u6)node[above right]{$u_6$};
\draw(rho)node[below right]{$\rho$};
\draw(b)node{(b)};
\end{tikzpicture}
\begin{tikzpicture}[x=8.5mm,y=8.5mm]
\coordinate (u1) at (-.25,1.23);
\coordinate (u2) at (-2,0);
\coordinate (u3) at (-1.75,-1.23);
\coordinate (u4) at (.25,-1.23);
\coordinate (u5) at (2,0);
\coordinate (u6) at (1.75,1.23);
\coordinate (rho) at (0,0);
\coordinate (u5u6) at ($2*(u5)!1/2!(u6)+(0,.25)$);
\coordinate (u5u6') at ($2*(u5)!1/2!(u6)-(0,.25)$);
\coordinate (u4u5) at ($2*(u4)!1/2!(u5)$);
\coordinate (u6rho) at ($(rho)!1/2!(u6)+(0,1.73)$);
\coordinate (u6rho') at ($2*(u6rho)!1/2!(u6)$);
\coordinate (u2rho) at ($(rho)!1/2!(u2)+(0,1.73)-(.4,0)$);
\coordinate (u2rho') at ($(rho)!1/2!(u2)+(0,1.73)$);
\coordinate (c) at (0, -2);
\path[fill=red, opacity=.2] (u5)--(u6)--(u1)--(u2)--(u3)--(u4)--cycle;
\draw[line width=.7pt] (u5)--(u6)--(u1)--(u2)--(u3)--(u4)--cycle;
\path[fill=red, opacity=.2] (rho)--(u6)--(u6rho)--cycle; \draw[line width=.7pt] (rho)--(u6)--(u6rho)--cycle;
\path[fill=red, opacity=.2] (u6rho)--(u6)--(u6rho')--cycle; \draw[line width=.7pt] (u6rho)--(u6)--(u6rho')--cycle;
\path[fill=red, opacity=.2] (rho)--(u2)--(u2rho)--cycle; \draw[line width=.7pt] (rho)--(u2)--(u2rho)--cycle;
\path[fill=red, opacity=.2] (rho)--(u2)--(u2rho')--cycle; \draw[line width=.7pt] (rho)--(u2)--(u2rho')--cycle;
\foreach\P in{u1,u2,u3,u4,u5,u6}\draw[line width=.7pt] (rho)--(\P);
\draw(u1)node[above]{$u_1$};
\draw(u2)node[left]{$u_2$};
\draw(u3)node[below]{$u_3$};
\draw(u4)node[below]{$u_4$};
\draw(u5)node[below right]{$u_5$};
\draw(u6)node[above right]{$u_6$};
\draw(rho)node[below right]{$\rho$};
\draw(u2rho)node[above]{$A$};
\draw(u2rho')node[above]{$B$};
\draw(c)node{(c)};
\end{tikzpicture}
\caption{(a) An illustration of a bracelet of circuit length $6$ in the case of $d=2$.
(b) An illustration of a bracelet (of circuit length $6$) with pendant $d$-trees in the case of $d=2$.
Here, four (nontrivial) pendant $d$-trees are attached to the bracelet part.
(c) An illustration of a bracelet (of circuit length $6$) with regular pendant $d$-trees in the case of $d=2$.
For example, the regular pendant $d$-trees $T_1$ and $T_2$ are the trivial $d$-tree $K(\{u_1,\rho\})$ and the $d$-tree generated from two $d$-simplices $\{u_2,\rho,A\}$ and $\{u_2,\rho,B\}$, respectively.}
\label{fig:df_bracelet}
\end{figure}
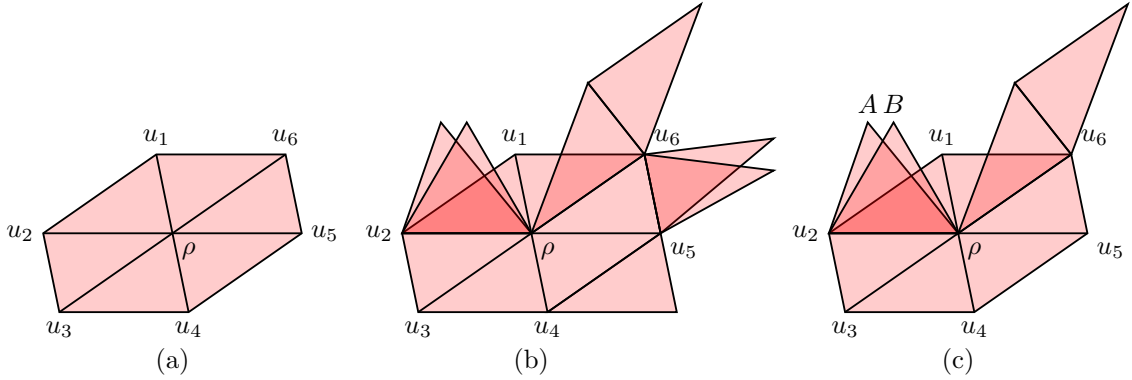
The following lemma is a useful property of bracelets with pendant $d$-trees (see also Figure~\ref{fig:d-tree}).
\begin{lem}\label{lem:d-tree}
Let $X$ be a $d$-tree, and let $w_0,w_1,\ldots,w_d\in V(X)$ be distinct vertices.
Suppose that $\tau\coloneqq\{w_0,w_1,\ldots,w_d\}\notin X$.
Then, the following hold.
\begin{enumerate}
\item There exist $(d-1)$-dimensional faces $\sg$ and $\sg'$ of $\tau$ such that $X\cap K(\tau)\subset K(\sg)\cup K(\sg')$.
\item If there exist $(d-1)$-dimensional faces $\sg\neq\sg'$ of $\tau$ such that $X\cap K(\tau)=K(\sg)\cup K(\sg')$, then $X\cup K(\tau)$ is a bracelet with pendant $d$-trees.
\end{enumerate}
\end{lem}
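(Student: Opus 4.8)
The plan is to exploit the inductive generating process~\eqref{eq:process} for the $d$-tree $X$ together with the hypothesis $\tau\notin X$. For part~(1), I would argue by contradiction: suppose that $X\cap K(\tau)$ is not contained in $K(\sg)\cup K(\sg')$ for any two $(d-1)$-faces $\sg,\sg'$ of $\tau$. Since $X\cap K(\tau)$ consists of faces of $\tau$ but does not contain $\tau$ itself (as $\tau\notin X$), the maximal simplices of $X\cap K(\tau)$ are $(d-1)$-faces of $\tau$. If three distinct $(d-1)$-faces $\sg_0,\sg_1,\sg_2$ of $\tau$ all lie in $X$, then all $d+1$ vertices of $\tau$ lie in $V(X)$, and one can trace through the generating process to see that the $d$-simplex spanned by any $(d-1)$-face together with the ``missing'' vertex must already have been filled — more precisely, having two $(d-1)$-faces $\sg_0,\sg_1$ of $\tau$ in a $d$-tree $X$ forces, via the acyclicity encoded in Lemma~\ref{lem:f0-fd} (equality $f_0=f_d+d$), that adding $\tau$ would either close a cycle among the $(d-1)$-simplices or reuse a vertex, contradicting $\tau\notin X$ unless at most two $(d-1)$-faces of $\tau$ lie in $X$. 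The cleanest route is a counting argument: look at the subcomplex $X'$ of $X$ generated by those $d$-simplices of $X$ all of whose vertices lie in $\tau$; since $X'\subset K(\tau)$ and $\tau\notin X'$, we have $f_0(X')\le d+1$, and if $X'$ contained three $(d-1)$-faces of $\tau$ then $f_0(X')=d+1$ while $\bar c(X')=1$, so Lemma~\ref{lem:f0-fd} gives $f_d(X')\ge 1$ and forces $X'$ to be a $d$-tree on $d+1$ vertices — but the only such $d$-tree with $\ge 3$ facets among the faces of $\tau$ would have to be $K(\tau)$ itself, contradiction. Hence at most two $(d-1)$-faces of $\tau$ lie in $X$, which is exactly~(1).

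For part~(2), assume $X\cap K(\tau)=K(\sg)\cup K(\sg')$ with $\sg\neq\sg'$ two $(d-1)$-faces of $\tau$. Write $\rho\coloneqq\sg\cap\sg'$, a $(d-2)$-simplex, and let $u,u'$ be the two vertices of $\tau$ with $\sg=\rho\cup\{u'\}$, $\sg'=\rho\cup\{u\}$ (so $\tau=\rho\cup\{u,u'\}$). The key structural claim is that, because $X$ is a $d$-tree containing both $\sg$ and $\sg'$ but not the $d$-simplex $\tau=\sg\cup\sg'$, the ``path'' in the dual tree-structure of $X$ connecting $\sg$ to $\sg'$ (which exists and is unique since $X$ is strongly connected and a $d$-tree) passes through a sequence of $d$-simplices all containing the common $(d-2)$-face $\rho$; adding $\tau$ closes this path into a cycle, producing a bracelet formed by $\rho$ and the intermediate vertices, with the remaining parts of $X$ hanging off as pendant $d$-trees. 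To make this precise I would: first, identify the bracelet $Z$ as the subcomplex of $X\cup K(\tau)$ generated by all $d$-simplices containing $\rho$ that lie on the unique $\sg$–$\sg'$ path in $X$, together with $\tau$; second, verify $Z$ is a bracelet of circuit length $r$ where $r$ is the number of distinct vertices $\neq\rho$ appearing, using Lemma~\ref{lem:f0-fd} to confirm $f_0(Z)=f_d(Z)$ (one more $d$-simplex than a $d$-tree on the same vertex set, reflecting the single cycle); third, show that $X\setminus Z$ decomposes into $d$-trees each meeting $Z$ in a single $(d-1)$-simplex and meeting each other only inside $Z$, which follows from the tree structure of $X$ — removing the cycle-forming edge $\tau$ from $X\cup K(\tau)$ gives back $X$, a $d$-tree, so every component of ``$X$ minus the bracelet simplices'' attaches to $Z$ through exactly one facet.

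The main obstacle I anticipate is formalizing the ``dual tree'' / ``unique path between two $(d-1)$-simplices'' notion and the claim that this path consists entirely of $d$-simplices sharing the $(d-2)$-face $\rho$. In dimension $d=1$ this is transparent (it is just the tree path between two vertices), but for $d\ge2$ one must be careful: a priori the path from $\sg$ to $\sg'$ in the sense of strong connectedness could wander through $d$-simplices not containing $\rho$, and one needs the hypothesis $X\cap K(\tau)=K(\sg)\cup K(\sg')$ (in particular that no other $(d-1)$-face of $\tau$ and no $d$-simplex on $\{u,u'\}$-side is in $X$) to rule that out. I would handle this by a careful induction on $f_d(X)$ using the generating process~\eqref{eq:process}: peel off the last-added $d$-simplex of $X$; if it is not on the $\sg$–$\sg'$ path its removal does not affect the bracelet and it becomes (part of) a pendant $d$-tree by induction, and if it is on the path one checks directly that it must contain $\rho$ (otherwise $\tau$ together with this simplex would create a forbidden face of $\tau$ in $X$, or would use a repeated vertex, contradicting that $X$ is a $d$-tree or that $\tau\notin X$). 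Once the structure of the path is pinned down, identifying the pendant $d$-trees and checking the intersection conditions in Definition~\ref{df:bracelet} is routine bookkeeping with the vertex counts supplied by Lemma~\ref{lem:f0-fd}.
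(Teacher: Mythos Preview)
Your argument for part~(1) has a genuine gap. The subcomplex $X'$ you define --- generated by the $d$-simplices of $X$ whose vertices all lie in $\tau$ --- is vacuous: any $d$-simplex with its $d+1$ vertices inside $\tau$ would have to equal $\tau$, but $\tau\notin X$. Even repairing the definition (e.g.\ taking $X'$ to be the pure $(d-1)$-dimensional part of $X\cap K(\tau)$), Lemma~\ref{lem:f0-fd} applied in dimension $d-1$ yields only $f_0(X')\le f_{d-1}(X')+(d-1)$, i.e.\ $d+1\le 3+(d-1)$, which is no contradiction. More importantly, your target ``at most two $(d-1)$-faces of $\tau$ lie in $X$'' is strictly weaker than~(1): the inclusion $X\cap K(\tau)\subset K(\sg)\cup K(\sg')$ also forbids any face of $\tau$ containing \emph{both} missing vertices from lying in $X$, and this full strength is what is invoked downstream in the proof of Lemma~\ref{lem:bracelet}. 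The paper's proof of~(1) is instead a direct construction via the generating process~\eqref{eq:process}: let $t_2$ be the first step at which all $d+1$ vertices of $\tau$ are present; then $X(t_2-1)\cap K(\tau)\subset K(\sg)$ where $\sg$ is the facet on the $d$ vertices of $\tau$ already present, the single $d$-simplex added at step $t_2$ meets $\tau$ in at most one further facet $\sg'$ (since $\tau\notin X$), and after $t_2$ no vertex of $\tau$ is new, so by the $d$-tree property $X(t)\cap K(\tau)$ is frozen for $t\ge t_2$.

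For part~(2) your plan is in the right spirit, but the ``dual tree'' you invoke is not actually a tree for $d\ge2$: in the graph whose nodes are the $(d-1)$-simplices of $X$ with adjacency given by containment in a common $d$-simplex, every $d$-simplex already contributes a clique on $d+1$ nodes, so paths are not unique and your uniqueness claim fails. The paper bypasses this by working with the time function $t(\hat\sg)\coloneqq\min\{t:\hat\sg\in X(t)\}$ from the generating process. Starting from $\sg=\rho\cup\{u_1\}$, whenever $t(\rho)<t(\rho\cup\{u_i\})$ the $d$-tree property forces the new vertex at step $t(\rho\cup\{u_i\})$ to be $u_i$ itself and the attaching facet $\sg_{t(\rho\cup\{u_i\})-1}$ to be of the form $\rho\cup\{u_{i+1}\}$; iterating gives a strictly $t$-decreasing chain $u_1,u_2,\ldots,u_r$ terminating at $t(\rho\cup\{u_r\})=t(\rho)$. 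Running the same argument from $\sg'$ and merging the two chains (they meet either at a common $u_i$ or at the base step $t(\rho)$) produces the bracelet explicitly. Your proposed induction on $f_d(X)$ could likely be made to work, but it would ultimately have to reproduce this chain, and the time-function argument gets there without the dual-graph formalism you yourself flagged as the main obstacle.
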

\begin{proof}
Let $v_t$'s and $\sg_{t-1}$'s~($t=1,2,\ldots,f_d(X)$) be the vertices and $(d-1)$-simplices in a generating process $X(0)\subsetneq X(1)\subsetneq\cdots\subsetneq X(f_d(X))=X$ of $X$, as described in~\eqref{eq:process}.

(1)~Note first that the number of vertices in $X(t)\cap K(\tau)$, i.e., $|V(X(t)\cap K(\tau))|$, either increases by one or does not change at each step.
Since $|V(X(0)\cap K(\tau))|\le d$ and $|V(X\cap K(\tau))|=d+1$, there exist the first steps $t_1$ and $t_2$ ($0\le t_1<t_2\le f_d(X)$) such that $|V(X(t_1)\cap K(\tau))|=d$ and $|V(X(t_2)\cap K(\tau))|=d+1$, respectively.
We define a $(d-1)$-dimensional face $\sg$ of $\tau$ as $V(X(t_1)\cap K(\tau))$, equivalently $\sg\coloneqq\tau\setminus\{v_{t_2}\}$.
Then, $X(t_1)\cap K(\tau)
\subset K(\sg)$.
Since the step $t_2$ is the first step at which the vertex $v_{t_2}$ is added in the process, it still holds that
\[
X(t_2-1)\cap K(\tau)
\subset K(\sg).
\]
Since $\tau\notin X$, the intersection of $\tau$ and the new $d$-simplex $\sg_{t_2-1}\cup\{v_{t_2}\}$ added at the step $t_2$ is contained in a $(d-1)$-dimensional face $\sg'$ of $\tau$.
Therefore,
\[
X(t_2)\cap K(\tau)
=\{X(t_2-1)\cup K(\sg_{t_2-1}\cup\{v_{t_2}\})\}\cap K(\tau)
\subset(X(t_2-1)\cap K(\tau))\cup K(\sg').
\]
Furthermore, since $X$ is a $d$-tree, $v_t\notin X(t-1)$ for all $t=1,2,\ldots,f_d(X)$.
Hence, noting that $\tau\subset V(X(t_2))$, we have $v_t\notin\tau$ for all $t=t_2+1,t_2+2,\ldots,f_d(X)$, which implies that
\[
X(t_2)\cap K(\tau)=X(t_2+1)\cap K(\tau)=\cdots=X(f_d(X))\cap K(\tau)=X\cap K(\tau).
\]
Combining the above equations, we obtain $X\cap K(\tau)\subset K(\sg)\cup K(\sg')$.

(2)~Since $X$ is a $d$-tree, it suffices to find a bracelet contained in $X\cup K(\tau)$.
For every simplex $\hat\sg\in X$, we denote by $t(\hat\sg)$ the first step such that $\hat\sg$ is added in the process:
\[
t(\hat\sg)\coloneqq\min\{t=0,1,\ldots,f_d(X)\mid\hat\sg\in X(t)\}.
\]
Set $\rho=\sg\cap\sg'$, and let $u_1\in V(X)\setminus\rho$ be a vertex such that $\rho\cup\{u_1\}=\sg$.
If $t(\rho)<t(\sg)$, then $\rho\subset\sg_{t(\sg)-1}\neq\sg$, which implies that the new vertex $v_{t(\sg)}$ added at the step $t(\sg)$ must be identical to $u_1$.
Let $u_2\in V(X)\setminus\rho$ be a vertex such that $\sg_{t(\sg)-1}=\rho\cup\{u_2\}$.
Note that $\rho\cup\{u_1,u_2\}=\sg_{t(\sg)-1}\cup\{v_{t(\sg)}\}$ is a $d$-simplex in $X$ and that $t(\rho\cup\{u_2\})<t(\rho\cup\{u_1\})$.
For the same reasoning, if $t(\rho)<t(\rho\cup\{u_2\})$, then there exists a vertex $u_3\in V(X)\setminus\rho$ such that $\rho\cup\{u_2,u_3\}$ is a $d$-simplex in $X$ and that $t(\rho\cup\{u_3\})<t(\rho\cup\{u_2\})$.
By iterating this procedure whenever $t(\rho)<t(\rho\cup\{u_i\})$, we obtain distinct vertices $u_1,u_2,\ldots,u_r\in V(X)\setminus\rho$ such that $\rho\cup\{u_1\}=\sg$, $t(\rho\cup\{u_r\})=t(\rho)$, and that each of $\rho\cup\{u_1,u_2\},\rho\cup\{u_2,u_3\},\ldots,\rho\cup\{u_{r-1},u_r\}$ is a $d$-simplex in $X$.
In the same way, we also obtain distinct vertices $u'_1,u'_2,\ldots,u'_{r'}\in V(X)\setminus\rho$ such that $\rho\cup\{u'_1\}=\sg'$, $t(\rho\cup\{u'_{r'}\})=t(\rho)$, and that each of $\rho\cup\{u'_1,u'_2\},\rho\cup\{u'_2,u'_3\},\ldots,\rho\cup\{u'_{r'-1},u_{r'}\}$ is a $d$-simplex in $X$.
Suppose that $\{u_1,u_2,\ldots,u_r\}\cap\{u'_1,u'_2,\ldots,u'_{r'}\}\neq\emptyset$.
Let $r''\coloneqq\min\{i\in\{1,2,\ldots,r'\}\mid u'_i\in\{u_1,u_2,\ldots,u_r\}\}$.
Then, the $(d-2)$-simplex $\rho$ and the distinct vertices $u_1,u_2,\ldots,u_r,u'_{r''-1},u'_{r''-2},\ldots,u'_1$ form a bracelet.
Next, suppose that $\{u_1,u_2,\ldots,u_r\}\cap\{u'_1,u'_2,\ldots,u'_{r'}\}=\emptyset$.
Then, $\rho\cup\{u_r,u'_{r'}\}$ is a $d$-simplex in $X$ because $t(\rho\cup\{u_r\})=t(\rho)=t(\rho\cup\{u'_{r'}\})$.
Therefore, the $(d-2)$-simplex $\rho$ and the distinct vertices $u_1,u_2,\ldots,u_r,u'_{r'},u'_{r'-1},\ldots,u'_1$ form a bracelet.
\end{proof}
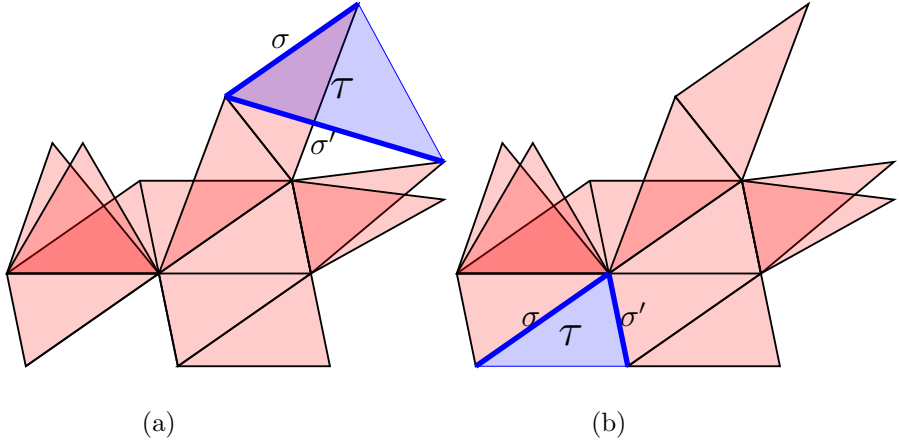
\begin{figure}[H]
\centering
\begin{tikzpicture}
\coordinate (u1) at (-.25,1.23);
\coordinate (u2) at (-2,0);
\coordinate (u3) at (-1.75,-1.23);
\coordinate (u4) at (.25,-1.23);
\coordinate (u5) at (2,0);
\coordinate (u6) at (1.75,1.23);
\coordinate (rho) at (0,0);
\coordinate (u5u6) at ($2*(u5)!1/2!(u6)+(0,.25)$);
\coordinate (u5u6') at ($2*(u5)!1/2!(u6)-(0,.25)$);
\coordinate (u4u5) at ($2*(u4)!1/2!(u5)$);
\coordinate (u6rho) at ($(rho)!1/2!(u6)+(0,1.73)$);
\coordinate (u6rho') at ($2*(u6rho)!1/2!(u6)$);
\coordinate (u2rho) at ($(rho)!1/2!(u2)+(0,1.73)-(.4,0)$);
\coordinate (u2rho') at ($(rho)!1/2!(u2)+(0,1.73)$);
\coordinate (a) at (0, -2);
\path[fill=red, opacity=.2] (u5)--(u6)--(u1)--(u2)--(u3)--(rho)--(u4)--cycle;
\draw[line width=.7pt] (u5)--(u6)--(u1)--(u2)--(u3)--(rho)--(u4)--cycle;
\path[fill=red, opacity=.2] (u5)--(u6)--(u5u6)--cycle; \draw[line width=.7pt] (u5)--(u6)--(u5u6)--cycle;
\path[fill=red, opacity=.2] (u5)--(u6)--(u5u6')--cycle; \draw[line width=.7pt] (u5)--(u6)--(u5u6')--cycle;
\path[fill=red, opacity=.2] (u4)--(u5)--(u4u5)--cycle; \draw[line width=.7pt] (u4)--(u5)--(u4u5)--cycle;
\path[fill=red, opacity=.2] (rho)--(u6)--(u6rho)--cycle; \draw[line width=.7pt] (rho)--(u6)--(u6rho)--cycle;
\path[fill=red, opacity=.2] (u6rho)--(u6)--(u6rho')--cycle; \draw[line width=.7pt] (u6rho)--(u6)--(u6rho')--cycle;
\path[fill=red, opacity=.2] (rho)--(u2)--(u2rho)--cycle; \draw[line width=.7pt] (rho)--(u2)--(u2rho)--cycle;
\path[fill=red, opacity=.2] (rho)--(u2)--(u2rho')--cycle; \draw[line width=.7pt] (rho)--(u2)--(u2rho')--cycle;
\foreach\P in{u1,u2,u3,u4,u5,u6}\draw[line width=.7pt] (rho)--(\P);
\path[fill=blue, opacity=.2] (u6rho)--(u6rho')--(u5u6)--cycle; \draw[line width=.35pt, blue] (u6rho)--(u6rho')--(u5u6)--cycle;
\draw[line width=2pt, blue] (u6rho)--(u5u6);
\draw[line width=2pt, blue] (u6rho')--(u6rho);
\coordinate (tau) at ($.333*($(u6rho)+(u6rho')+(u5u6)$)$);
\coordinate (sg) at ($(u6rho)!1/2!(u6rho')+(.1,-.1)$);
\coordinate (sg') at ($(u6rho)!1/2!(u5u6)+(-.15,.1)$);
\draw(tau)node{\LARGE$\tau$};
\draw(sg)node[above left]{\large$\sg$};
\draw(sg')node[below]{\large$\sg'$};
\draw(a)node{(a)};
\end{tikzpicture}
\begin{tikzpicture}
\coordinate (u1) at (-.25,1.23);
\coordinate (u2) at (-2,0);
\coordinate (u3) at (-1.75,-1.23);
\coordinate (u4) at (.25,-1.23);
\coordinate (u5) at (2,0);
\coordinate (u6) at (1.75,1.23);
\coordinate (rho) at (0,0);
\coordinate (u5u6) at ($2*(u5)!1/2!(u6)+(0,.25)$);
\coordinate (u5u6') at ($2*(u5)!1/2!(u6)-(0,.25)$);
\coordinate (u4u5) at ($2*(u4)!1/2!(u5)$);
\coordinate (u6rho) at ($(rho)!1/2!(u6)+(0,1.73)$);
\coordinate (u6rho') at ($2*(u6rho)!1/2!(u6)$);
\coordinate (u2rho) at ($(rho)!1/2!(u2)+(0,1.73)-(.4,0)$);
\coordinate (u2rho') at ($(rho)!1/2!(u2)+(0,1.73)$);
\coordinate (b) at (0, -2);
\path[fill=red, opacity=.2] (u5)--(u6)--(u1)--(u2)--(u3)--(rho)--(u4)--cycle;
\draw[line width=.7pt] (u5)--(u6)--(u1)--(u2)--(u3)--(rho)--(u4)--cycle;
\path[fill=red, opacity=.2] (u5)--(u6)--(u5u6)--cycle; \draw[line width=.7pt] (u5)--(u6)--(u5u6)--cycle;
\path[fill=red, opacity=.2] (u5)--(u6)--(u5u6')--cycle; \draw[line width=.7pt] (u5)--(u6)--(u5u6')--cycle;
\path[fill=red, opacity=.2] (u4)--(u5)--(u4u5)--cycle; \draw[line width=.7pt] (u4)--(u5)--(u4u5)--cycle;
\path[fill=red, opacity=.2] (rho)--(u6)--(u6rho)--cycle; \draw[line width=.7pt] (rho)--(u6)--(u6rho)--cycle;
\path[fill=red, opacity=.2] (u6rho)--(u6)--(u6rho')--cycle; \draw[line width=.7pt] (u6rho)--(u6)--(u6rho')--cycle;
\path[fill=red, opacity=.2] (rho)--(u2)--(u2rho)--cycle; \draw[line width=.7pt] (rho)--(u2)--(u2rho)--cycle;
\path[fill=red, opacity=.2] (rho)--(u2)--(u2rho')--cycle; \draw[line width=.7pt] (rho)--(u2)--(u2rho')--cycle;
\foreach\P in{u1,u2,u3,u4,u5,u6}\draw[line width=.7pt] (rho)--(\P);
\path[fill=blue, opacity=.2] (u3)--(u4)--(rho)--cycle; \draw[line width=.35pt, blue] (u3)--(u4)--(rho)--cycle;
\draw[line width=2pt, blue] (u3)--(rho);
\draw[line width=2pt, blue] (u4)--(rho);
\coordinate (tau) at ($.333*($(rho)+(u3)+(u4)$)$);
\coordinate (sg) at ($(rho)!1/2!(u3)+(.1,-.2)$);
\coordinate (sg') at ($(rho)!1/2!(u4)+(-.1,-.2)$);
\draw(tau)node{\LARGE$\tau$};
\draw(sg)node[above left]{\large$\sg$};
\draw(sg')node[above right]{\large$\sg'$};
\draw(b)node{(b)};
\end{tikzpicture}
\caption{Illustrations for Lemma~\ref{lem:d-tree} in the case of $d=2$.
In each of (a) and (b), the red part indicates the $d$-tree $X$.
(a) The $(d-1)$-dimensional faces $\sg$ and $\sg'$ of $\tau$ satisfy that $X\cap K(\tau)\subset K(\sg)\cup K(\sg')$.
(b) The $(d-1)$-dimensional faces $\sg\neq\sg'$ of $\tau$ satisfy that $X\cap K(\tau)=K(\sg)\cup K(\sg')$.
In this case, $X\cup K(\tau)$ is actually a bracelet with pendant $d$-trees.}
\label{fig:d-tree}
\end{figure}

Coming back to the pure $d$-dimensional simplicial complex $X_a$ for $a\in\cW_{k,l,(k+l)/2+d-1}^{(2),+}$, we now determine the detailed structure of $X_a$.
\begin{lem}\label{lem:bracelet}
Let $k,l\ge2$ such that $k+l$ is even, and let $a=(w^1,w^2)\in\cW_{k,l,(k+l)/2+d-1}^{(2),+}$.
Then, $X_a$ is a bracelet with regular pendant $d$-trees.
\end{lem}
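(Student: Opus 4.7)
The plan is to exploit the identity $|\Supp_0(a)| = |\Supp_d(a)| + d - 1$ via a generating process for $X_a$, apply Lemma~\ref{lem:d-tree} at the single step where $X_a$ deviates from being a $d$-tree, and then use a parity argument on the dual multigraph of the closed walks $w^1, w^2$ to rule out a degenerate configuration and to force the regularity of the pendant $d$-trees.

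First I record the numerical data: since $a \in \cW_{k,l,(k+l)/2+d-1}^{(2),+}$, we have $|\Supp_d(a)| = (k+l)/2$, and the identity $k + l = \sum_\tau N_a(\tau) \ge 2 |\Supp_d(a)|$ forces $N_a(\tau) = 2$ for every $\tau \in \Supp_d(a)$; combined with $|\Supp_0(a)| = |\Supp_d(a)| + d - 1$, the complex $X_a$ falls exactly one vertex short of the $d$-tree bound in Lemma~\ref{lem:f0-fd}. I then run a generating process~\eqref{eq:process} for $X_a$: because the vertex count grows by at most one per step and the total deficit is one, exactly one step $t^*$ fails to add a new vertex. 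Hence $X_a(t^*-1)$ is itself a $d$-tree, and at step $t^*$ we adjoin $\tau \coloneqq \sg_{t^*-1} \cup \{v_{t^*}\}$ with $v_{t^*} \in V(X_a(t^*-1))$ and $\tau \notin X_a(t^*-1)$. Lemma~\ref{lem:d-tree}(1) then produces $(d-1)$-faces $\sg, \sg'$ of $\tau$ (with $\sg_{t^*-1}$ among them) such that $X_a(t^*-1) \cap K(\tau) \subset K(\sg) \cup K(\sg')$.

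The main obstacle is to exclude the degenerate case $\sg = \sg'$. I would argue by a parity bookkeeping on the dual multigraph whose vertices are the $(d-1)$-simplices of $X_a$ and whose edges record the pair of $(d-1)$-faces used at each traversal; the closed-walk condition for $w^1$ and $w^2$ forces every vertex of this multigraph to have even degree. After leaf-peeling any pendant $d$-tree that may be grafted at steps $> t^*$ onto a non-$\sg_{t^*-1}$ face of $\tau$, this parity condition together with $\sum_\beta N_\tau(\beta) = 2 N_a(\tau) = 4$ forces both traversals of $\tau$ to use one and the same pair $\{\sg_{t^*-1}, \alpha\}$ for some $(d-1)$-face $\alpha$ of $\tau$; because $\alpha$ is a dead end for any walk within $X_a \setminus K(\tau)$, both these traversals must belong to a single walk, say $w^1$. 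A short case analysis on the closed walks, exploiting that $X_a(t^*-1)$ is a $d$-tree and that any strongly connected sub-support of a $d$-tree is itself a $d$-tree, then forces the $d$-simplex supports of $w^1$ and $w^2$ to be disjoint, yielding $\Supp_d(w^1) \cap \Supp_d(w^2) = \emptyset$ and contradicting the defining property of $\cW_{k,l,(k+l)/2+d-1}^{(2)}$.

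Hence $\sg \neq \sg'$, and Lemma~\ref{lem:d-tree}(2) identifies $X_a(t^*)$ as a bracelet with pendant $d$-trees around $\rho \coloneqq \sg \cap \sg'$. Each subsequent generating step adds a new vertex and hence a $d$-simplex attached via a single $(d-1)$-face, extending a pendant $d$-tree, so $X_a$ itself is a bracelet with pendant $d$-trees. For regularity, an analogous parity argument shows that attaching a pendant tree at a non-spoke $(d-1)$-face of a bracelet $d$-simplex $\rho \cup \{u_q, u_{q+1}\}$ (one not of the form $\rho \cup \{u_q\}$ or $\rho \cup \{u_{q+1}\}$) would create a $(d-1)$-simplex of odd degree in the dual multigraph; hence every pendant tree is rooted at a spoke $(d-1)$-simplex $\rho \cup \{u_q\}$, and the disjointness $T_q \cap T_{q'} = K(\rho)$ for $q \neq q'$ then follows because the spoke faces are pairwise distinct and each pendant tree is itself a $d$-tree rooted at its own spoke.
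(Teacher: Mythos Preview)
Your overall architecture---run a generating process for $X_a$, locate the unique step $t^*$ at which the vertex count fails to increase, and invoke Lemma~\ref{lem:d-tree}---matches the paper's proof exactly. The differences lie in how you dispose of the degenerate case $\sg=\sg'$ and how you establish regularity, and in both places the paper takes a more direct route than your parity arguments.

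For the degenerate case, the paper observes that when $f_{d-1}(X(t^*-1)\cap K(\tau))=1$ the incidence relation between $(d-1)$- and $d$-simplices in $X_a$ is identical to that of a genuine $d$-tree. Hence any closed $(n,d)$-word supported in $X_a$ must traverse every $d$-simplex in its support at least twice, giving $|\Supp_d(w^j)|\le\lfloor k_j/2\rfloor$ for $j=1,2$; since $\Supp_d(w^1)\cap\Supp_d(w^2)\neq\emptyset$, this yields
\[
(k+l)/2=|\Supp_d(a)|\le|\Supp_d(w^1)|+|\Supp_d(w^2)|-1\le\lfloor k/2\rfloor+\lfloor l/2\rfloor-1\le(k+l)/2-1,
\]
a contradiction. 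Your route via leaf-peeling and dual-multigraph parity reaches only the weaker conclusion that the single simplex $\tau$ lies in one walk's support; the ``short case analysis'' you then invoke to upgrade this to full disjointness of the supports is precisely the tree-like observation the paper uses from the outset---so you have deferred rather than avoided it, and the leaf-peeling machinery is not needed.

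For regularity, your claim that a non-spoke pendant forces a $(d-1)$-simplex of odd degree in the dual multigraph is not correct as stated: closed walks give even degree at every vertex of that multigraph regardless of where pendants are attached, so no odd-degree contradiction can arise. The paper instead argues directly. If a pendant simplex $\tau_p$ shares a non-spoke $(d-1)$-face with a bracelet simplex $\tau_b$, then each of $w^1,w^2$ must visit $\tau_b$ at least once (otherwise that word's support misses a bracelet simplex and hence lies in a $d$-tree, forcing $N_{w^j}(\tau')\ge2$ throughout and, via $N_a\equiv2$, disjoint supports---contradiction). This forces $N_{w^1}(\tau_b)=N_{w^2}(\tau_b)=1$. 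But whichever word visits $\tau_p$ can only enter and leave the pendant tree through the face $\tau_b\cap\tau_p$, and the only bracelet simplex containing that face is $\tau_b$; so that word traverses $\tau_b$ at least twice, contradicting $N_{w^j}(\tau_b)=1$. This is a counting argument, not a parity one, and it is what your heuristic is gesturing toward.
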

\begin{proof}
Noting that $X_a$ is a strongly connected pure $d$-dimensional simplicial complex, we fix a generating process $X(0)\subsetneq X(1)\subsetneq\cdots\subsetneq X(k)=X_a$ of $X_a$ as described in~\eqref{eq:process}.
Since $|\Supp_0(a)|=(k+l)/2+d-1=(|\Supp_d(a)|+d)-1$, there exists a step $t'\in\{1,2,\ldots,(k+l)/2\}$ in the generating process such that $v_{t'}\in X(t'-1)$ and $v_t\notin V(X(t-1))$ for all $t\in\{1,2,\ldots,(k+l)/2\}\setminus\{t'\}$.
Since $X(t'-1)$ is a $d$-tree and $\sg_{t'-1}\cup\{v_{t'}\}\notin X(t'-1)$, it follows from Lemma~\ref{lem:d-tree}(1) that there exist $(d-1)$-dimensional faces $\sg$ and $\sg'$ of $\tau$ such that $X(t'-1)\cap K(\sg_{t'-1}\cup\{v_{t'}\})\subset K(\sg)\cup K(\sg')$.
In particular, $f_{d-1}(X(t'-1)\cap K(\sg_{t'-1}\cup\{v_{t'}\}))\in\{1,2\}$.
Assume that $f_{d-1}(X(t'-1)\cap K(\sg_{t'-1}\cup\{v_{t'}\}))=1$.
In this case, the structure of $X_a$ is essentially $d$-tree in the sense that the inclusionwise relationship between $(d-1)$- and $d$-simplices in $X_a$ is identical to that of a $d$-tree.
Hence, $N_{w^j}(\tau)\ge2$ holds for $j=1,2$ and $\tau\in\Supp_d(w^j)$.
Therefore,
\[
k=\sum_{\tau\in\Supp_d(w^1)}N_{w^1}(\tau)\ge2|\Supp_d(w^1)|
\quad\text{and}\quad
l=\sum_{\tau\in\Supp_d(w^2)}N_{w^2}(\tau)\ge2|\Supp_d(w^2)|,
\]
which implies that $|\Supp_d(w^1)|\le\lfloor k/2\rfloor$ and $|\Supp_d(w^2)|\le\lfloor l/2\rfloor$, respectively.
Thus, we have
\[
(k+l)/2=|\Supp_d(a)| \le|\Supp_d(w^1)|+|\Supp_d(w^2)|-1 \le\lfloor k/2\rfloor+\lfloor l/2\rfloor-1\le(k+l)/2-1,
\]
which contradicts.
Consequently, it must hold that $X(t'-1)\cap K(\sg_{t'-1}\cup\{v_{t'}\})=K(\sg)\cup K(\sg')$, which implies that $X_{t'}$ is a bracelet with pendant $d$-trees from Lemma~\ref{lem:d-tree}(2).
Clearly, $X_a$ is also a bracelet with pendant $d$-trees by taking account of the generating process of $X_a$ after step $t'$.

Finally, we show the regularity of the pendant $d$-trees of $X_a$.
Let $Z_a$ be the bracelet of $X_a$, and suppose that $Z_a$ is generated from $\rho\in F_{d-2}(Z)$ and distinct vertices $u_1,u_2,\ldots,u_r\in V(Z)\setminus\rho$ as described in Definition~\ref{df:bracelet}.
Assume that there exist $d$-simplices $\tau_b\in Z_a$ and $\tau_p\in X_a\setminus Z_a$ such that $\tau_b\cap\tau_p$ is a $(d-1)$-simplex that coincides none of $\rho\cup\{u_1\},\rho\cup\{u_2\},\ldots,\rho\cup\{u_r\}$.
We may assume that $N_{w^1}(\tau_p)\ge1$ without loss of generality.
Then, it holds that $N_{w^1}(\tau_b)\ge1$.
Indeed, if $N_{w^1}(\tau_b)=0$, then $X_{w^1}$ is a $d$-tree.
Therefore, $N_{w^1}(\tau)\ge2$ for any $\tau\in\Supp_d(w^1)$.
Since $N_a(\tau)=2$ for any $\tau\in\Supp_d(a)$, we have $\Supp_d(w^1)\cap\Supp_d(w^2)=\emptyset$, which is not the case.
In the same reasoning, we have $N_{w^2}(\tau_b)\ge1$.
Consequently, we obtain $N_{w^1}(\tau_b)=N_{w^2}(\tau_b)=1$ (see also Figure~\ref{fig:bracelet}).
However, this is not the case because the closed $(n,d)$-word $w^1$ must visit $\tau_b$ at least twice, i.e., $N_{w^1}(\tau_b)\ge2$.
\end{proof}
\begin{figure}[H]
\centering
\begin{tikzpicture}[x=10mm,y=10mm]
\coordinate (u1) at (-.25,1.23);
\coordinate (u2) at (-2,0);
\coordinate (u3) at (-1.75,-1.23);
\coordinate (u4) at (.25,-1.23);
\coordinate (u5) at (2,0);
\coordinate (u6) at (1.75,1.23);
\coordinate (rho) at (0,0);
\coordinate (u5u6) at ($2*(u5)!1/2!(u6)$);
\coordinate (u5u6') at ($(u5)+(2.3,.2)$);
\coordinate (u5u6'') at ($(u5)+(2,-.2)$);
\coordinate (u4u5) at ($2*(u4)!1/2!(u5)$);
\coordinate (u6rho) at ($(rho)!1/2!(u6)+(0,1.73)$);
\coordinate (u6rho') at ($2*(u6rho)!1/2!(u6)$);
\coordinate (u2rho) at ($(rho)!1/2!(u2)+(0,1.73)-(.4,0)$);
\coordinate (u2rho') at ($(rho)!1/2!(u2)+(0,1.73)$);
\coordinate (taub) at ($.333*($(u5)+(u6)$)$);
\coordinate (taup) at ($.333*($(u5)+(u6)+(u5u6)$)$);
\coordinate (Xa) at ($(u1)+(0,1)$);
\coordinate (A') at ($.2*($3*(u5)+2*(u5u6')$)$);
\coordinate (A) at ($.2*($3*(u5)+2*(u5u6)$)$);
\coordinate (B) at ($.2*($3*(u5)+2*(u6)$)$);
\coordinate (C) at ($.2*($3*(u5)+2*(rho)$)$);
\coordinate (C') at ($.2*($3*(u4)+2*(rho)$)$);
\coordinate (D') at ($.2*($3*(u6rho)+2*(u6)$)$);
\coordinate (D) at ($.2*($2*(u6)+3*(rho)$)$);
\coordinate (E) at ($.2*($2*(u5)+3*(rho)$)$);
\coordinate (E') at ($.2*($2*(u4)+3*(rho)$)$);
\path[fill=red, opacity=.2] (u5)--(u6)--(u1)--(u2)--(u3)--(u4)--cycle;
\draw[line width=.7pt] (u5)--(u6)--(u1)--(u2)--(u3)--(u4)--cycle;
\path[fill=red, opacity=.2] (u5)--(u6)--(u5u6)--cycle; \draw[line width=.7pt] (u5)--(u6)--(u5u6)--cycle;
\path[fill=red, opacity=.2] (u5)--(u5u6)--(u5u6')--cycle; \draw[line width=.7pt] (u5)--(u5u6)--(u5u6')--cycle;
\path[fill=red, opacity=.2] (u5)--(u5u6)--(u5u6'')--cycle; \draw[line width=.7pt] (u5)--(u5u6)--(u5u6'')--cycle;
\path[fill=red, opacity=.2] (rho)--(u6)--(u6rho)--cycle; \draw[line width=.7pt] (rho)--(u6)--(u6rho)--cycle;
\path[fill=red, opacity=.2] (u6rho)--(u6)--(u6rho')--cycle; \draw[line width=.7pt] (u6rho)--(u6)--(u6rho')--cycle;
\path[fill=red, opacity=.2] (rho)--(u2)--(u2rho)--cycle; \draw[line width=.7pt] (rho)--(u2)--(u2rho)--cycle;
\path[fill=red, opacity=.2] (rho)--(u2)--(u2rho')--cycle; \draw[line width=.7pt] (rho)--(u2)--(u2rho')--cycle;
\foreach\P in{u1,u2,u3,u4,u5,u6}\draw[line width=.7pt] (rho)--(\P);
\draw[line width=1pt, red] (A)--(B)--(C);
\draw[line width=1pt, red, dashed] (A)--(A');
\draw[line width=1pt, red, dashed, ->] (C)--(C');
\draw[line width=1pt, blue] (D)--(E);
\draw[line width=1pt, blue, dashed] (D)--(D');
\draw[line width=1pt, blue, dashed, ->] (E)--(E');
\foreach\P in{A,B,C}\fill[red](\P)circle(2pt);
\foreach\P in{D,E}\fill[blue](\P)circle(2pt);
\draw(u1)node[above]{$u_1$};
\draw(u2)node[left]{$u_2$};
\draw(u3)node[below]{$u_3$};
\draw(u4)node[below]{$u_4$};
\draw(u5)node[below right]{$u_5$};
\draw(u6)node[above right]{$u_6$};
\draw(rho)node[below right]{$\rho$};
\draw(taub)node{\Large$\tau_b$};
\draw(taup)node{\Large$\tau_p$};
\draw(Xa)node{\Large$X_a$};
\end{tikzpicture}
\caption{An illustration of the $(n,d)$-words $w^1$ (red line) and $w^2$ (blue line) when we assume the existence of the $\tau_b$ and $\tau_p$ in the proof of Lemma~\ref{lem:bracelet} in the case of $d=2$.}
\label{fig:bracelet}
\end{figure}
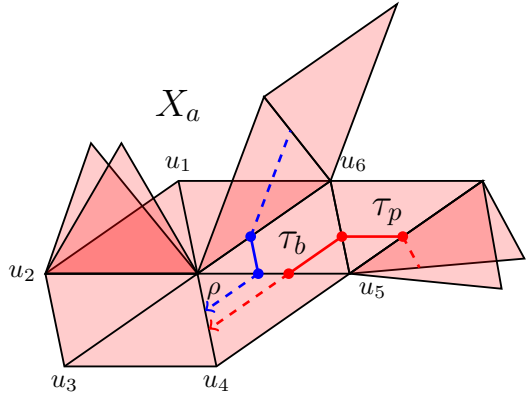

Now, we are ready to prove Lemma~\ref{lem:cW_ks2+}.
\begin{proof}[Proof of Lemma~\ref{lem:cW_ks2+}]
Suppose that $a=(w^1,w^2)\in\cW_{k,l,(k+l)/2+d-1}^{(2),+}$.
Recall that $N_a(\tau)=2$ for every $\tau\in\Supp_d(a)$.
From Lemma~\ref{lem:bracelet}, $X_a$ is a bracelet with regular pendant $d$-trees.
Let $Z_a$ denote the bracelet of $X_a$.
For each $\tau\in\Supp_d(a)$, we first claim that $\tau\in Z_a$ if and only if $N_{w^1}(\tau)=N_{w^2}(\tau)=1$.
For the one direction, suppose that $\tau\in Z_a$.
Then, since $X_a$ is a bracelet with regular pendant $d$-trees, $X_a\setminus\{\tau\}$ consists of one $d$-tree and $d-1$ maximal $(d-1)$-simplices.
Assume that the $(n,d)$-word $w^j$ fails to visit $\tau$, i.e., $N_{w^j}(\tau)=0$.
Then, $X_{w^j}$ must be a $d$-tree, which implies that $N_{w^j}(\tau')\ge2$ for all $\tau'\in\Supp_d(w^j)$.
Thus, $\Supp_d(w^1)\cap\Supp_d(w^2)=\emptyset$, contradicting the definition of $\cW_{k,l,(k+l)/2+d-1}^{(2)}$.
For the other direction, suppose that $\tau$ belongs to a regular pendant $d$-tree of $X_a$.
Then, one of the closed $(n,d)$-words $w^1$ and $w^2$ visits $\tau$ at least twice.
Consequently, we obtain the above claim.

Let $r\ge3$ be fixed, and let $\cW_{k,l,(k+l)/2+d-1}^{(2),+,r}$ denote the set of all $a=(w^1,w^2)\in\cW_{k,l,(k+l)/2+d-1}^{(2),+}$ whose bracelet $Z_a$ is of circuit length $r$.
We may assume without loss of generality that the bracelet $Z_a$ of each $a=(w^1,w^2)\in\cW_{k,l,(k+l)/2+d-1}^{(2),+,r}$ is supported on $[d-1+r]$ and is formed by a $(d-2)$-simplex $[d-1]\in F_{d-2}(Z_a)$ and the distinct vertices $d,d+1,\ldots,d-1+r$.
Then, the $a=(w^1,w^2)\in\cW_{k,l,(k+l)/2+d-1}^{(2),+,r}$ possesses the following data:
the closed $(n,d)$-words $\{w^j_q\}_{q=1}^r$ restricted to the regular pendant $d$-trees attached to the $(d-1)$-simplices $\{[d-1]\cup\{d-1+q\}\}_{q=1}^r$ associated with each closed $(n,d)$-word $w^j$, what number the $(d-1)$-simplex $[d]$ appears in each $(n,d)$-word $w^j$, the induced ordering on the $(d-1)$-simplex $[d]$ by each $(n,d)$-word $w^j$ (recall Definition~\ref{df:induced_ordering}), and whether $Z_a$ is traversed by the closed $(n,d)$-words $w^1$ and $w^2$ in the same or in opposing directions.
Here, the ordering of the initial $(d-1)$-simplex $[d-1]\cup\{d-1+q\}$ of each $w^j_q$ is given by the induced ordering on $[d-1]\cup\{d-1+q\}$ by the $(n,d)$-word $w^j$.
Meanwhile, we can verify that these data are enough to recover the original $a\in\cW_{k,l,(k+l)/2+d-1}^{(2),+,r}$.
Since $r$ number of such data generates the same $a\in\cW_{k,l,(k+l)/2+d-1}^{(2),+,r}$ due to the rotation of the bracelet, there exist exactly
\[
\frac2r\Biggl(d!k\sum_{\substack{k_1,k_2,\ldots,k_r\in2\Z_{\ge0}\\k_1+\cdots+k_r=k-r}}\prod_{q=1}^rd^{k_q/2}\cC_{k_q/2}\Biggr)\Biggl(d!l\sum_{\substack{l_1,l_2,\ldots,l_r\in2\Z_{\ge0}\\l_1+\cdots+l_r=l-r}}\prod_{q=1}^rd^{l_q/2}\cC_{l_q/2}\Biggr)
\]
elements in $\cW_{k,l,(k+l)/2+d-1}^{(2),+,r}$ from Lemma~\ref{lem:KR17_Lem3.11}(3).
Furthermore, using also Lemma~\ref{lem:KR17_Lem3.11}(2), we can determine the sign in the right-hand side of~\eqref{eq:bar_Ta_sign}:
\[
\bar T_n(a)=\sgn(a)\{p(1-p)\}^{(k+l)/2}=\{p(1-p)\}^{(k+l)/2}.\qedhere
\]
\end{proof}

\section{Multivariate central limit theorem for moments}\label{sec:multivariate_CLT}
In this section, we prove Theorem~\ref{thm:multi_CLT}(2).
For any fixed $K\in\N$, the matrix $\Sg_K=\{\sg(k,l)\}_{0\le k,l\le K}$ is symmetric and positive semidefinite since
\[
\sg(k,l)
=\lim_{n\to\infty}n^d\{np(1-p)\}\cdot\cov(\langle L_{H_n},x^k\rangle,\langle L_{H_n},x^l\rangle)
\]
for any $k,l\in\Z_{\ge0}$ from Theorem~\ref{thm:multi_CLT}(1).
Hence, there exists a mean-zero Gaussian process $\{W_k\}_{k=0}^\infty$ such that $\E[W_kW_l]=\sg(k,l)$ for any $k,l\in\Z_{\ge0}$.
Our primary goal in this section is to prove the following lemma, which reduces to Theorem~\ref{thm:multi_CLT}(1) when $h=2$.
\begin{lem}\label{lem:multi_moment}
Let $h\ge2$ and $k_1,k_2,\ldots,k_h\in\Z_{\ge0}$.
If $\lim_{n\to\infty}np(1-p)=\infty$, then
\begin{equation}\label{eq:multi_moment}
\lim_{n\to\infty}[n^d\{np(1-p)\}]^{h/2}\cdot\E\Biggl[\prod_{j=1}^h(\langle L_{H_n},x^{k_j}\rangle-\E\langle L_{H_n},x^{k_j}\rangle)\Biggr]
=\E[W_{k_1}W_{k_2}\cdots W_{k_h}].
\end{equation}
\end{lem}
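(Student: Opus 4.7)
The plan is to extend the $(n,d)$-sentence framework of Section~\ref{sec:cov_moments} from two words to $h$ words and to analyze the normalized contribution of each equivalence class as $n \to \infty$. For an $(n,d)$-sentence $a = (w^1,\ldots,w^h)$ with each $w^j$ a closed $(n,d)$-word of length $k_j + 1$, set
\[
\bar T_n(a) \coloneqq \E\Biggl[\prod_{j=1}^h\bigl(T_n(w^j) - \bar T_n(w^j)\bigr)\Biggr].
\]
Expanding each factor $\langle L_{H_n}, x^{k_j}\rangle - \E\langle L_{H_n}, x^{k_j}\rangle$ via~\eqref{eq:kth_moment} and grouping over equivalence classes gives, in analogy with~\eqref{eq:V_kth_moment_2},
\[
\E\Biggl[\prod_{j=1}^h(\langle L_{H_n},x^{k_j}\rangle - \E\langle L_{H_n},x^{k_j}\rangle)\Biggr] = \frac{1}{\{d!\binom{n}{d}\}^h\{np(1-p)\}^{(k_1+\cdots+k_h)/2}}\sum_{s}\sum_{a\in\cW_s^{(h)}}|[a]|\,\bar T_n(a),
\]
where $\cW_s^{(h)}$ denotes the set of equivalence classes of $(n,d)$-sentences of type $(k_1,\ldots,k_h)$ with $|\Supp_0(a)| = s$ and $\bar T_n(a) \ne 0$.

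The structural key is that $\bar T_n(a) = 0$ whenever $[h]$ admits a nontrivial partition $[h] = J_1 \sqcup J_2$ such that $\bigcup_{j\in J_1}\Supp_d(w^j)$ is disjoint from $\bigcup_{j'\in J_2}\Supp_d(w^{j'})$, since the centered factors in the two groups are then independent with mean zero. Applied locally to each strongly connected component of $X_a$, this forces every component to contain $d$-simplices traversed by words from at least two distinct indices, hence $\bar c(X_a) \le h/2$. Combined with Lemma~\ref{lem:f0-fd}, the bound $|\Supp_d(a)| \le (k_1+\cdots+k_h)/2$ from $N_a(\tau)\ge 2$, and a refinement of the ``$-1$'' type improvement in Lemma~\ref{lem:cW_ks2}(1) applied to each component, we obtain
\[
|\Supp_0(a)|\le\frac{k_1+\cdots+k_h}{2}+\frac{(d-1)h}{2},
\]
with equality attainable only when $h$ is even and $X_a$ decomposes into $h/2$ vertex-disjoint bracelets with regular pendant $d$-trees in the sense of Lemma~\ref{lem:bracelet}, each involving a distinct pair of words. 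A power-counting argument parallel to~\eqref{eq:V_kth_moment_3}--\eqref{eq:V_kth_moment_6} then shows that only sentences saturating this vertex bound survive the normalization by $[n^d\{np(1-p)\}]^{h/2}$.

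The main obstacle is proving the $h$-fold generalization of Lemmas~\ref{lem:cW_ks2}--\ref{lem:cW_ks2+}, namely controlling equivalence classes of $(n,d)$-sentences whose support complex has a strongly connected component linking three or more words or fails the regular-bracelet structure. This is exactly the role of the $d$-simplex-bounding tables announced in the outline of Section~\ref{sec:multivariate_CLT}, which play the higher-dimensional analogue of the edge-bounding table from the graph case: they furnish, for each additional word pulled into a component beyond the minimal pair structure, a strict loss of one power of either $np(1-p)$ or $n$ after normalization. Together with the class-counting estimate of the Lemma~\ref{lem:cW_ks2}(2) type, this shows that every non-pair-bracelet configuration contributes $o(1)$.

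Once the reduction to pair-bracelet decompositions is established, the leading contribution factorizes across the $h/2$ strongly connected components. For each pair partition $\pi$ of $\{1,\ldots,h\}$ (which exists only when $h$ is even), summing over $(n,d)$-sentences whose components each realize a pair in $\pi$ with regular-bracelet structure produces, by direct application of Theorem~\ref{thm:multi_CLT}(1) to each pair,
\[
\prod_{\{j,j'\}\in\pi}\sg(k_j,k_{j'}).
\]
Summing over all pair partitions yields the Isserlis/Wick formula for the mean-zero Gaussian vector $(W_{k_j})_{j=1}^h$, which establishes~\eqref{eq:multi_moment}; for odd $h$, no pair partition exists and both sides vanish. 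Finally, joint moment convergence for all tuples $(k_1,\ldots,k_h)$ combined with Carleman's theorem (applicable since the limiting Gaussian process is determined by its moments) upgrades this to the multivariate convergence in distribution asserted in Theorem~\ref{thm:multi_CLT}(2).
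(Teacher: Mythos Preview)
Your outline tracks the paper's strategy closely, and the pieces you identify are the right ones. Two corrections and one observation.

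First, your vanishing claim is false as stated: if $[h]=J_1\sqcup J_2$ with $|J_1|,|J_2|\ge 2$ and the two groups have disjoint $d$-supports, independence only gives
\[
\bar T_n(a)=\E\Biggl[\prod_{j\in J_1}\bigl(T_n(w^j)-\bar T_n(w^j)\bigr)\Biggr]\cdot\E\Biggl[\prod_{j\in J_2}\bigl(T_n(w^j)-\bar T_n(w^j)\bigr)\Biggr],
\]
and neither factor need vanish (each is a higher joint moment of centered variables, not a mean). Vanishing holds only when one block is a singleton, i.e.\ when some word $w^{j_0}$ has $\Supp_d(w^{j_0})$ disjoint from all others; this is precisely the condition the paper imposes, and it already yields $\bar c(X_a)\le\lfloor h/2\rfloor$, which is all you actually use.

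Second, your description of the equality case is incomplete: the extremal components are not only bracelets with regular pendant $d$-trees (the $\cW^{(2),+}$ type) but also the $\cW^{(2),-}$ configurations of Lemma~\ref{lem:cW_ks2-} (two $d$-trees sharing exactly one $d$-simplex). Each component lies in $\cW^{(2)}_{k_j,k_{j'},(k_j+k_{j'})/2+d-1}$, which is the disjoint union of both types. Your factorization via Theorem~\ref{thm:multi_CLT}(1) nonetheless recovers the correct $\sg(k_j,k_{j'})$ for each pair, so the conclusion is unaffected.

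Finally, an observation on the routes. Your ``$-1$ per component'' argument (if $f_0(C)=K_C/2+d$ then $C$ is a $d$-tree with $N_a\equiv 2$, forcing the words in $C$ to have disjoint $d$-supports, a contradiction) is valid for any component carrying $m\ge 2$ words, and summing gives $s\le|\bfk|/2+(d-1)\bar c(X_a)$. Combined with $\bar c\le h/2$ this already yields the sharp uniform bound $s\le(|\bfk|+(d-1)h)/2$ needed for the power counting. The paper instead proves the alternative inequality $s\le\lfloor(|\bfk|-h)/2\rfloor+d\cdot\bar c(X_a)$ via $d$-forests and $d$-simplex-bounding tables (Lemma~\ref{lem:cW_ksh}); the two bounds agree at $\bar c=h/2$ and either suffices here. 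So the ``main obstacle'' you flag is genuine along the paper's route but is in fact bypassed by the component-wise argument you sketched first.
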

Once the above lemma is proved, the multivariate version of Carleman's theorem implies that for any $K\in\Z_{\ge0}$,
\begin{equation}\label{eq:multi_moment_0}
\Bigl\{\sqrt{n^d\{np(1-p)\}}\cdot(\langle L_{H_n},x^k\rangle-\E\langle L_{H_n},x^k\rangle)\Bigr\}_{k=0}^K
\xrightarrow[n\to\infty]{d}(W_0,W_1,\ldots,W_K).
\end{equation}
Since $(W_0,W_1,\ldots,W_K)\sim\cN(0,\Sg_K)$, Theorem~\ref{thm:multi_CLT}(2) follows.
It follows from~\eqref{eq:multi_moment_0} that for any real-valued polynomial function $f(x)=\sum_{k=0}^Ka_kx^k$,
\begin{equation}\label{eq:multi_moment_00}
\sqrt{n^d\{np(1-p)\}}\cdot(\langle L_{H_n},f\rangle-\E\langle L_{H_n},f\rangle)
\xrightarrow[n\to\infty]{d}\sum_{k=0}^Ka_kW_k\sim\cN\biggl(0,\sum_{k,l=0}^Ka_k\sg(k,l)a_l\biggr).
\end{equation}
Furthermore, by Theorem~\ref{thm:multi_CLT}(1), we have
\begin{align}\label{eq:multi_moment_000}
n^d\{np(1-p)\}\cdot\Var(\langle L_{Y_n},f\rangle)
&=n^d\{np(1-p)\}\cdot\Var\Biggl(\sum_{k=0}^Ka_k\langle L_{Y_n},x^k\rangle\Biggr)\nonumber\\
&=\sum_{k=0}^K\sum_{l=0}^Ka_ka_ln^d\{np(1-p)\}\cdot\cov(\langle L_{Y_n},x^k\rangle,\langle L_{Y_n},x^l\rangle)\nonumber\\
&\xrightarrow[n\to\infty]{}\sum_{k,l=0}^Ka_k\sg(k,l)a_l,
\end{align}
which completes the proof of Corollary~\ref{cor:poly-type_CLT}.

Before starting the proof of Lemma~\ref{lem:multi_moment}, we remark an expression of the right-hand side of~\eqref{eq:multi_moment}, which is useful in the proof below.
\begin{rem}\label{rem:multi_moment}
When $h$ is odd, $\E[W_{k_1}W_{k_2}\cdots W_{k_h}]=0$.
Indeed, since $(W_{k_1},W_{k_2},\ldots,W_{k_h})$ has the same distribution as that of $(-W_{k_1},-W_{k_2},\ldots,-W_{k_h})$, we have
\[
\E[W_{k_1}W_{k_2}\cdots W_{k_h}]
=\E[(-W_{k_1})(-W_{k_2})\cdots(-W_{k_h})]
=-\E[W_{k_1}W_{k_2}\cdots W_{k_h}].
\]
When $h$ is even, Isserlis' theorem yields
\begin{equation}\label{eq:multi_moment_1}
\E[W_{k_1}W_{k_2}\cdots W_{k_h}]
=\sum_{\substack{\{j(c),j'(c)\}_{c=1}^{h/2}\colon\\\text{matching on $[h]$}}}\prod_{c=1}^{h/2}\sg(k_{j(c)},k_{j'(c)}).
\end{equation}
Note that if $k_1+k_2+\cdots+k_h$ is odd, then the right-hand side of~\eqref{eq:multi_moment_1} is equal to zero since $\sg(k,l)=0$ if $k+l$ is odd.
\end{rem}

We now turn to the proof of Lemma~\ref{lem:multi_moment}.
When $\min\{k_1,k_2,\ldots,k_h\}\le1$, Lemma~\ref{lem:multi_moment} is trivial because $\langle L_{H_n},1\rangle=1$ and  $\langle L_{H_n},x\rangle=0$ almost surely.
Hence, let $k_1,k_2,\ldots,k_h\ge2$ in what follows in this section.
In addition, we set $\bfk=(k_1,k_2,\ldots,k_h)$ and $|\bfk|\coloneqq k_1+k_2+\cdots+k_h$ for convenience.
Recall again that
\[
\langle L_{H_n},x^k\rangle =\frac1{d!\binom nd\{np(1-p)\}^{k/2}}\sum_{w=\widetilde{\sg_1}\sg_2\cdots\sg_{k+1}}T_n(w)
\]
for any $k\ge2$ as seen in~\eqref{eq:kth_moment}.
Here, $w=\widetilde{\sg_1}\sg_2\cdots\sg_{k+1}$ in the summation runs over all closed $(n,d)$-words of length $k+1$, and
\[
T_n(w)=\prod_{i=1}^k(A_{d-1}(Y^d_{n,p})-\E A_{d-1}(Y^d_{n,p}))_{\sg_i,\sg_{i+1}}.
\]
Therefore,
\begin{equation}\label{eq:multi_moment_2}
\E\Biggl[\prod_{j=1}^h(\langle L_{H_n},x^{k_j}\rangle-\E\langle L_{H_n},x^{k_j}\rangle)\Biggr]
=\frac1{\bigl\{d!\binom nd\bigr\}^h\{np(1-p)\}^{|\bfk|/2}}\sum_{a=(w^1,w^2,\ldots,w^h)}\bar T_n(a),
\end{equation}
where $a=(w^1,w^2,\ldots,w^h)$ in the summation runs over all $(n,d)$-sentences consisting of closed $(n,d)$-words $w^j$'s of length $k_j+1$, and
\begin{align}\label{eq:multi_moment_3}
\bar T_n(a)
&\coloneqq\E\Biggl[\prod_{j=1}^h\bigl(T_n(w^j)-\bar T_n(w^j)\bigr)\Biggr]\nonumber\\
&=\sum_{J\subset[h]}(-1)^{h-|J|}\E\Biggl[\prod_{j\in J}T_n(w^j)\Biggr]\Biggl(\prod_{j\in[h]\setminus J}\bar T_n(w^j)\Biggr)\nonumber\\
&=\sgn(a)\sum_{J\subset[h]}(-1)^{h-|J|}\Biggl(\prod_{\tau\in\bigcup_{j\in J}\Supp_d(w^j)}\E\Bigl[(\chi_p-p)^{\sum_{j\in J}N_{w^j}(\tau)}\Bigr]\Biggr)\nonumber\\
&\qad\cdot\Biggl(\prod_{j\in[h]\setminus J}\prod_{\tau\in\Supp_d(w^j)}\E\bigl[(\chi_p-p)^{N_{w^j}(\tau)}\bigr]\Biggr).
\end{align}
Note that $\bar T_n(a)=0$ unless for every $j\in[h]$, there exists $j'\in[h]\setminus\{j\}$ such that $\Supp_d(w^j)\cap\Supp_d(w^{j'})\neq\emptyset$.
Also, $\bar T_n(a)=0$ unless $N_a(\tau)\coloneqq\sum_{j=1}^hN_{w^j}(\tau)\ge2$ for all $\tau\in\Supp_d(a)$.
Furthermore, $\bar T_n(a)=\bar T_n(b)$ if $a$ and $b$ are equivalent $(n,d)$-sentences.
For $1\le s\le n$, let $\cW_{\bfk,s}^{(h)}$ denote the set of all representatives for the equivalence classes of sentences $a=(w^1,w^2,\ldots,w^h)$ consisting of closed $(n,d)$-words $w^j$'s of length $k_j+1$ such that for every $j\in[h]$, there exists $j'\in[h]\setminus\{j\}$ such that $\Supp_d(w^j)\cap\Supp_d(w^{j'})\neq\emptyset$, $N_a(\tau)\ge2$ for all $\tau\in\Supp_d(a)$, and $|\Supp_0(a)|=s$.
Then, by~\eqref{eq:multi_moment_2}, we obtain
\begin{equation}\label{eq:multi_moment_4}
\E\Biggl[\prod_{j=1}^h(\langle L_{H_n},x^{k_j}\rangle-\E\langle L_{H_n},x^{k_j}\rangle)\Biggr]
=\frac1{\bigl\{d!\binom nd\bigr\}^h\{np(1-p)\}^{|\bfk|/2}}\sum_{s=1}^n \sum_{a\in\cW_{\bfk,s}^{(h)}}|[a]|\bar T_n(a).
\end{equation}
Here, $[a]$ again denotes the equivalence class of the $(n,d)$-sentence $a$.
For a further calculation of~\eqref{eq:multi_moment_4}, the following lemma is crucial.
Recall that $\bar c(X)$ denotes the number of strongly connected components of a given pure $d$-dimensional simplicial complex $X$ (see Definition~\ref{df:pure}).
\begin{lem}\label{lem:cW_ksh}
Let $k_1,k_2,\ldots,k_h\ge2$ and $1\le s\le n$ be fixed, and let $a\in\cW_{\bfk,s}^{(h)}$.
Then,
\begin{equation}\label{eq:cW_ksh_1}
\bar c(X_a)\le\lfloor h/2\rfloor
\end{equation}
and
\begin{equation}\label{eq:cW_ksh_2}
d+1\le s\le\biggl\lfloor\frac{|\bfk|-h}2\biggr\rfloor+d\cdot\bar c(X_a).
\end{equation}
\end{lem}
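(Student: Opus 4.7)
My plan is to treat the two inequalities separately. For~\eqref{eq:cW_ksh_1}, I will introduce the auxiliary graph $G$ on the vertex set $[h]$ in which $j$ and $j'$ are adjacent exactly when $\Supp_d(w^j)\cap\Supp_d(w^{j'})\neq\emptyset$. The defining condition of $\cW_{\bfk,s}^{(h)}$ forbids isolated vertices in $G$, so each component of $G$ has at least two vertices and hence $G$ has at most $\lfloor h/2\rfloor$ connected components. Whenever $j$ and $j'$ share a $d$-simplex $\tau$, any two $(d-1)$-faces of $\tau$ are joined by the adjacency $\sigma\cup\sigma'=\tau$, which combined with the strong connectedness of each individual $X_{w^{j''}}$ shows that words lying in a common component of $G$ belong to the same strongly connected component of $X_a$. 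Consequently $\bar c(X_a)$ is at most the number of connected components of $G$, which proves~\eqref{eq:cW_ksh_1}.

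The lower bound $s\ge d+1$ in~\eqref{eq:cW_ksh_2} is immediate because $k_1\ge 2$ forces $\Supp_d(w^1)\neq\emptyset$. For the upper bound, I plan to localise the estimate to each strongly connected component. Writing $Z_1,\dots,Z_c$ for the components of $X_a$ with $c=\bar c(X_a)$ and setting $I_i\coloneqq\{j\in[h]:X_{w^j}\subset Z_i\}$, $h_i\coloneqq|I_i|$, $|\bfk|_i\coloneqq\sum_{j\in I_i}k_j$, the argument above shows that $I_i$ is a union of $G$-components, so $h_i\ge 2$. Since the $Z_i$ are $d$-simplex disjoint, we also have $\sum_i h_i=h$, $\sum_i|\bfk|_i=|\bfk|$, and $f_0(X_a)\le\sum_i f_0(Z_i)$; it therefore suffices to prove the per-component inequality $2f_0(Z_i)+h_i\le|\bfk|_i+2d$. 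Substituting $|\bfk|_i=2f_d(Z_i)+\sum_\tau(N_a(\tau)-2)$ and introducing the deficiency $r_i\coloneqq f_d(Z_i)-f_0(Z_i)+d\ge 0$ from Lemma~\ref{lem:f0-fd}, this is in turn equivalent to
\[
\sum_{\tau\in\Supp_d(Z_i)}(N_a(\tau)-2)+2r_i\;\ge\;h_i.
\]

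To establish this component-wise inequality I will use the $d$-simplex-bounding table referenced in the introduction---the higher-dimensional analogue of the classical edge-bounding table---which processes the walks $\{w^j:j\in I_i\}$ sequentially and tabulates each step as either the introduction of a new $d$-simplex (split into $f_0(Z_i)-d$ tree-type additions and $r_i$ cycle-closing ones) or the revisit of an old one. In the pure $d$-tree case $r_i=0$, every $\tau\in\Supp_d(Z_i)$ is a bridge, so the parity of each closed walk forces $N_{w^j}(\tau)\ge 2$ whenever $\tau\in\Supp_d(w^j)$, i.e.\ $N_a(\tau)\ge 2N(\tau)$ with $N(\tau)\coloneqq|\{j:\tau\in\Supp_d(w^j)\}|$. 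Combined with the elementary observation that the sharing constraint inside $I_i$ implies $\sum_\tau(N(\tau)-1)\ge h_i/2$ (because each $j\in I_i$ contributes to some doubly covered $\tau$ and every doubly covered $\tau$ has $N(\tau)\ge 2$), this gives $\sum_\tau(N_a(\tau)-2)\ge h_i$ directly. In the general case the $r_i$ cycle-closing $d$-simplices relax parity locally, but each contributes two units to the surplus $2r_i$, which a discharging argument pairs with the walks whose parity conclusion fails on that cycle.

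The main obstacle will be carrying out this discharging cleanly when $Z_i$ has cycles: for a walk $w^j$ traversing a non-bridge $\tau$ only once, the naive inequality $N_a(\tau)\ge 2N(\tau)$ fails, and one must distribute the missing parity among the $r_i$ fundamental cycles. I plan to do this by fixing a spanning $d$-tree $T\subset Z_i$, computing the class of each $w^j$ in the $\F_2$-vector space of fundamental cycles generated by the $r_i$ chords, and showing that each cycle can absorb the deficit of at most two walks, so that the total surplus contributed by the $r_i$ chords is at least twice the number of walks whose even-parity conclusion fails.
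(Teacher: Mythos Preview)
Your treatment of~\eqref{eq:cW_ksh_1} via the auxiliary graph $G$ is correct and is a slightly more explicit version of the paper's one-line argument. The lower bound in~\eqref{eq:cW_ksh_2} and the reduction of the upper bound to the per-component inequality
\[
\sum_{\tau\in F_d(Z_i)}\bigl(N_a(\tau)-2\bigr)+2r_i\;\ge\;h_i
\]
are also correct. Your proof in the tree case $r_i=0$ is essentially complete (the ``elementary observation'' $\sum_\tau(N(\tau)-1)\ge h_i/2$ does hold, by noting that $\sum_{\tau:N(\tau)\ge 2}N(\tau)\ge\max(h_i,2|\{\tau:N(\tau)\ge2\}|)$).

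The genuine gap is the general case $r_i>0$. Your discharging plan via a spanning $d$-tree, $\F_2$ cycle classes, and the assertion that ``each cycle can absorb the deficit of at most two walks'' is not a proof: nothing you have said bounds how many walks can traverse a given chord an odd number of times, and even if a single chord is used oddly by many walks, the $2r_i$ surplus need not cover them. Your description of the $d$-simplex-bounding table as a sequential tabulation of new versus old simplices is also not what the device actually is.

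The paper avoids this case split entirely. Instead of working with $Z_i$ and tracking $r_i$, it first passes (via Lemma~\ref{lem:d-forest}) to a spanning $d$-forest $X_a'\subset X_a$ satisfying $F_0(X_a')=F_0(X_a)$ and $f_0(X_c')=f_d(X_c')+d$ on each component; this simply drops the $r_i$ chord $d$-simplices, so the deficiency disappears and the target inequality becomes $f_d(X_a')\le\lfloor(|\bfk|-h)/2\rfloor$. The $d$-simplex-bounding table is then a $0/1$ array $B=\{b^j_i\}_{(j,i)}$ with three explicit conditions ensuring $f_d(X_a')\le\frac12\sum b^j_i$. The key step is: if some row $j_0$ is all ones, then $X_{w^{j_0}}\subset X_a'$ is contained in a $d$-forest, so the closed walk $w^{j_0}$ visits every simplex at least twice, and one entry of row $j_0$ (corresponding to a simplex shared with another word) can be zeroed while preserving the three conditions. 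Iterating yields a table with a zero in every row, giving $\sum b^j_i\le|\bfk|-h$. This is exactly the parity argument you used in the tree case, but the passage to $X_a'$ makes it apply uniformly, with no discharging needed.
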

The proof of Lemma~\ref{lem:cW_ksh} is deferred to the end of this section.
Combining Lemma~\ref{lem:cW_ksh} with~\eqref{eq:multi_moment_4}, we have
\begin{align}\label{eq:multi_moment_5}
&\E\Biggl[\prod_{j=1}^h(\langle L_{H_n},x^{k_j}\rangle-\E\langle L_{H_n},x^{k_j}\rangle)\Biggr]\nonumber\\
&=\sum_{s=d+1}^{\lfloor(|\bfk|-h)/2\rfloor+d\lfloor h/2\rfloor} \sum_{a\in\cW_{\bfk,s}^{(h)}}\frac{(n-d)(n-d-1)\cdots(n-s+1)}{\bigl\{d!\binom nd\bigr\}^{h-1}\{np(1-p)\}^{|\bfk|/2}}\bar T_n(a).
\end{align}
Here, we also used the fact that there exist exactly $n(n-1)\cdots(n-s+1)$ number of $(n,d)$-sentences that are equivalent to each $a\in\cW_{\bfk,s}^{(h)}$.
We omit the proof since it is given by the same argument as that for Lemma~\ref{lem:cW_ks2}(3).
By~\eqref{eq:multi_moment_3}, a similar calculation to~\eqref{eq:V_kth_moment_3} and~\eqref{eq:V_kth_moment_4} yields
\[
|\bar T_n(a)|
\le2^h\{p(1-p)\}^{|\Supp_d(a)|}
\le2^h\{p(1-p)\}^{s-d\cdot\bar c(X_a)}
\le2^h\{p(1-p)\}^{s-dh/2}
\]
for any $d+1\le s\le\lfloor(|\bfk|-h)/2\rfloor+d\lfloor h/2\rfloor$ and $a\in\cW_{\bfk,s}^{(h)}$.
Here, the second inequality follows from Lemma~\ref{lem:f0-fd}.
For the last inequality, we used~\eqref{eq:cW_ksh_1}.
We can also prove $|\cW_{\bfk,s}^{(h)}|\le (d!)^h(|\bfk|/2)^{d|\bfk|}$ in a similar way to the proof of Lemma~\ref{lem:cW_ks2}(2).
Therefore, for any $d+1\le s<(|\bfk|-h)/2+d(h/2)=(|\bfk|+dh-h)/2$,
\begin{align*}
\Biggl|\sum_{a\in\cW_{\bfk,s}^{(h)}}\frac{(n-d)(n-d-1)\cdots(n-s+1)}{\bigl\{d!\binom nd\bigr\}^{h-1}\{np(1-p)\}^{|\bfk|/2}}\bar T_n(a)\Biggr|
&\le\frac{C_{d,\bfk,h}}{n^{dh/2}\{np(1-p)\}^{(|\bfk|+dh)/2-s}}\\
&\le\frac{C_{d,\bfk,h}}{n^{dh/2}\{np(1-p)\}^{(h+1)/2}},
\end{align*}
where $C_{d,\bfk,h}$ is a constant depending only on $d$, $\bfk$, and $h$.
Therefore, by~\eqref{eq:multi_moment_5}, when $h$ is odd or $|\bfk|$ is odd,
\[
\lim_{n\to\infty}[n^d\{np(1-p)\}]^{h/2}\cdot\E\Biggl[\prod_{j=1}^h(\langle L_{H_n},x^{k_j}\rangle-\E\langle L_{H_n},x^{k_j}\rangle)\Biggr]
=0.
\]
Furthermore, when both $h$ and $|\bfk|$ are even, a straightforward calculation yields
\begin{align*}
&[n^d\{np(1-p)\}]^{h/2}\cdot\E\Biggl[\prod_{j=1}^h(\langle L_{H_n},x^{k_j}\rangle-\E\langle L_{H_n},x^{k_j}\rangle)\Biggr]\nonumber\\
&=(1+o_{d,h,\bfk}(1))\sum_{a\in\cW_{\bfk,(|\bfk|+dh-h)/2}^{(h)}}\frac{\bar T_n(a)}{\{p(1-p)\}^{(|\bfk|-h)/2}}+o_{d,h,\bfk}(1).
\end{align*}
Here, $o_{d,\bfk,h}(1)$ is a function of $n$, depending on $d$, $h$, and $\bfk$, that converges to zero as $n\to\infty$.
Now, we are ready to prove Lemma~\ref{lem:multi_moment}.
\begin{proof}[Proof of Lemma~\ref{lem:multi_moment}]
Combining the above discussion with Remark~\ref{rem:multi_moment}, it suffices to prove that whenever both $h$ and $|\bfk|$ are even,
\[
\lim_{n\to\infty}\sum_{a\in\cW_{\bfk,(|\bfk|+dh-h)/2}^{(h)}}\frac{\bar T_n(a)}{\{p(1-p)\}^{(|\bfk|-h)/2}}
=\sum_{\substack{\{j(c),j'(c)\}_{c=1}^{h/2}\colon\\\text{matching on $[h]$}}}\prod_{c=1}^{h/2}\sg(k_{j(c)},k_{j'(c)}).
\]
It follows from Lemma~\ref{lem:cW_ksh} that if $a\in\cW_{\bfk,(|\bfk|+dh-h)/2}^{(h)}$, then $X_a$ must have exactly $h/2$ strongly connected components.
Therefore, $a=(w^1,w^2,\ldots,w^h)\in\cW_{\bfk,(|\bfk|+dh-h)/2}^{(h)}$ determines a matching $\{j(c),j'(c)\}_{c=1}^{h/2}$ on $[h]$ such that every strongly connected component of $X_a$ is the support complex of the $(n,d)$-sentence $(w^{j(c)},w^{j'(c)})\in\cW_{k_{j(c)},k_{j'(c)},s_c}^{(2)}$ for some $1\le s_c\le n$.
Since $s_c\le\lfloor(k_{j(c)}+k_{j'(c)})/2\rfloor+d-1$ by Lemma~\ref{lem:cW_ks2}(1), it follows from $|\Supp_0(a)|=(|\bfk|+dh-h)/2$ that $s_c=(k_{j(c)}+k_{j'(c)})/2+d-1$ for all $c=1,2,\ldots,h/2$ and that every strongly connected component of $X_a$ is a connected component.
In particular, every $k_{j(c)}+k_{j'(c)}$ must be even.
Conversely, given a matching $\{j(c),j'(c)\}_{c=1}^{h/2}$ on $[h]$ such that every $k_{j(c)}+k_{j'(c)}$ is even and $a_c\in\cW_{k_{j(c)},k_{j'(c)},s_c}^{(2)}$ with $s_c\coloneqq(k_{j(c)}+k_{j'(c)})/2+d-1$~($c=1,2,\ldots,h/2$), we can recover $a\in\cW_{\bfk,(|\bfk|+dh-h)/2}^{(h)}$.
Furthermore, $\bar T_n(a)=\prod_{c=1}^{h/2}\bar T_n(a_c)$ in this case.
Thus,
\begin{align}\label{eq:multi_moment_6}
&\sum_{a\in\cW_{\bfk,(|\bfk|+dh-h)/2}^{(h)}}\frac{\bar T_n(a)}{\{p(1-p)\}^{(|\bfk|-h)/2}}\nonumber\\
&=\sum_{\{j(c),j'(c)\}_{c=1}^{h/2}}\sum_{\substack{a_1,a_2,\ldots,a_{h/2}\colon\\a_c\in\cW_{k_{j(c)},k_{j'(c)},s_c}^{(2)}}}\prod_{c=1}^{h/2}\frac{\bar T_n(a_c)}{\{p(1-p)\}^{(k_{j(c)}+k_{j'(c)})/2-1}}\nonumber\\
&=\sum_{\{j(c),j'(c)\}_{c=1}^{h/2}}\prod_{c=1}^{h/2}\Biggl(\sum_{a_c\in\cW_{k_{j(c)},k_{j'(c)},s_c}^{(2)}}\frac{\bar T_n(a_c)}{\{p(1-p)\}^{(k_{j(c)}+k_{j'(c)})/2-1}}\Biggr)\nonumber\\
&\xrightarrow[n\to\infty]{}\sum_{\{j(c),j'(c)\}_{c=1}^{h/2}}\prod_{c=1}^{h/2}\sg(k_{j(c)},k_{j'(c)})\nonumber\\
&=\sum_{\substack{\{j(c),j'(c)\}_{c=1}^{h/2}\colon\\\text{matching on $[h]$}}}\prod_{c=1}^{h/2}\sg(k_{j(c)},k_{j'(c)}).
\end{align}
Here, each $\{j(c),j'(c)\}_{c=1}^{h/2}$ in the summations in the second, third, and fourth lines runs over all matchings on $[h]$ such that $k_{j(c)}+k_{j'(c)}$ is even for any $c=1,2,\ldots,h/2$.
For the fourth line of~\eqref{eq:multi_moment_6}, we note that
\[
\lim_{n\to\infty}\sum_{a_i\in\cW_{k_{j(c)},k_{j'(c)},s_c}^{(2)}}\frac{\bar T_n(a_i)}{\{p(1-p)\}^{(k_{j(c)}+k_{j'(c)})/2-1}}
=\sg(k_{j(c)},k_{j'(c)})
\]
as seen in the proof of Theorem~\ref{thm:multi_CLT}(1).
The final line of~\eqref{eq:multi_moment_6} follows from the fact that if $k_{j(c)}+k_{j'(c)}$ is odd, then $\sg(k_{j(c)},k_{j'(c)})=0$ by the definition.
\end{proof}

Lastly, we prove Lemma~\ref{lem:cW_ksh}, where the notion of $d$-forests is crucial.
\begin{df}[$d$-forest]
We call a union of finitely many $d$-trees $X_1,X_2,\ldots,X_c$ a \textit{$d$-forest} if $\dim(X_i\cap X_{i'})<d-1$ for any $i\neq i'\in[c]$.
\end{df}
We use the following lemma in the proof of Lemma~\ref{lem:cW_ksh}.
\begin{lem}\label{lem:d-forest}
Let $X$ be a strongly connected pure $d$-dimensional simplicial complex.
Then, there exists a $d$-forest $X'\subset X$ such that $F_0(X')=F_0(X)$ and $f_0(X')=f_d(X')+d$.
\end{lem}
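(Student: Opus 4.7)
The plan is to construct $X'$ by fixing a generating process for $X$ and retaining exactly the $d$-simplices that introduce a fresh vertex. Fix any generating process $X(0)\subsetneq X(1)\subsetneq\cdots\subsetneq X(f_d(X))=X$ as in~\eqref{eq:process}, with $X(t)=X(t-1)\cup K(\sg_{t-1}\cup\{v_t\})$, and set $T_+\coloneqq\{t:v_t\notin V(X(t-1))\}$. Since a $d$-simplex has $d+1$ vertices whereas $V(X(0))=V(\sg_0)$ has only $d$, step $1$ is forced to add a new vertex, so $1=t_1\in T_+$ and $|T_+|=f_0(X)-d$. Let $X'$ be the subcomplex of $X$ generated by $\{\sg_{t-1}\cup\{v_t\}:t\in T_+\}$; then $f_d(X')=|T_+|=f_0(X)-d$, and because $\sg_{t_1-1}=\sg_0$, the $d$-simplex added at step $1$ already carries all vertices of $\sg_0$. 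Therefore $V(X')=V(\sg_0)\cup\{v_t:t\in T_+\}=V(X)$, which gives $F_0(X')=F_0(X)$ and $f_0(X')=f_d(X')+d$.

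I then show that $X'$ is a $d$-forest by induction along the $T_+$ steps processed in order $t_1<t_2<\cdots<t_m$. Writing $X'(t_j)$ for the subcomplex of $X'$ generated by the first $j$ such $d$-simplices, the base $X'(t_1)=K(\sg_0\cup\{v_{t_1}\})$ is a single $d$-tree. For the inductive step at $t_j$, set $\tau_j\coloneqq\sg_{t_j-1}\cup\{v_{t_j}\}$ and observe that among its $d+1$ codimension-one faces, all except $\sg_{t_j-1}$ contain the fresh vertex $v_{t_j}\notin V(X'(t_{j-1}))$; hence $\sg_{t_j-1}$ is the only $(d-1)$-face of $\tau_j$ that can possibly lie in $X'(t_{j-1})$. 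If $\sg_{t_j-1}$ is a face of some $d$-simplex of $X'(t_{j-1})$, it sits in a unique tree $T_i$, and adjoining $\tau_j$ extends $T_i$ by a valid tree-growth step since $v_{t_j}\notin V(T_i)$; otherwise $\sg_{t_j-1}\notin X'(t_{j-1})$ and $\tau_j$ initiates a new $d$-tree component. In either situation, for every tree $T_{i'}$ of $X'(t_{j-1})$ not containing $\sg_{t_j-1}$, the intersection $K(\tau_j)\cap T_{i'}$ lies inside $K(\sg_{t_j-1})$ (since $v_{t_j}\notin V(T_{i'})$) and misses $\sg_{t_j-1}$ itself, so has dimension at most $d-2$; the remaining pairwise intersections $T_i\cap T_{i'}$ are unaffected and stay low-dimensional by the inductive hypothesis.

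The main obstacle is to rule out that $\tau_j$ simultaneously shares a $(d-1)$-face with two distinct pre-existing trees and thereby merges them. This is blocked because the only $(d-1)$-face of $\tau_j$ that can lie in $X'(t_{j-1})$ is $\sg_{t_j-1}$, and a $(d-1)$-simplex can be a face of $d$-simplices in at most one tree of a $d$-forest (otherwise two trees would share that $(d-1)$-simplex, violating the intersection-dimension bound). Combined with the freshness of $v_{t_j}$, which excludes every other codimension-one face of $\tau_j$ from $X'(t_{j-1})$, the inductive hypothesis is preserved at every step; after processing all $m$ steps of $T_+$, the complex $X'=X'(t_m)$ is the required $d$-forest.
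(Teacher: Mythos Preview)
Your construction is exactly the one the paper uses: keep precisely the $d$-simplices from the generating process that introduce a fresh vertex, and observe that the result is a $d$-forest covering all vertices with $f_0=f_d+d$. The paper dispatches the $d$-forest property with the word ``obviously''; you supply the inductive verification that at each retained step the new simplex either extends a unique existing tree or begins a new one, with all pairwise tree intersections remaining of dimension at most $d-2$. Your argument is correct and fills in what the paper leaves implicit.
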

\begin{proof}
We modify the generating process~\eqref{eq:process} by not adding the $d$-simplex $\sg_{t-1}\cup\{v_t\}$ if $v_t\notin V(X(t-1))$ at every step $t=1,2,\ldots,f_d(X)$.
We denote the resulting subcomplex of $X$ by $X'$.
In other words, letting $T$ denote the set of all indices $t\in\{1,2,\ldots,f_d(X)\}$ such that $v_t\notin V(X(t-1))$, define $X'\coloneqq X\setminus\{\sg_{t-1}\cup\{v_t\}\}_{t\in T}$.
Then, obviously, $X'$ is a $d$-forest and $F_0(X')=F_0(X)$.
Furthermore, by tracking the change of the numbers of vertices and $d$-simplices in the modified process, we obtain $f_0(X')=f_d(X')+d$.
\end{proof}
We prove Lemma~\ref{lem:cW_ksh} using Lemma~\ref{lem:d-forest}.
\begin{proof}[Proof of Lemma~\ref{lem:cW_ksh}]
Let $a=(w^1,w^2,\ldots,w^h)\in\cW_{\bfk,s}^{(h)}$.
Since the support complex of each closed $(n,d)$-word in $a$ has a $d$-simplex in common with at least one support complex of another $(n,d)$-word, $\bar c(X_a)\le\lfloor h/2\rfloor$ immediately follows.
The first inequality in~\eqref{eq:cW_ksh_2} is also trivial since $k_1,k_2,\ldots,k_h\ge2$.
For the second inequality in~\eqref{eq:cW_ksh_2}, let $X_1,X_2,\ldots,X_{\bar c(X_a)}$ be the strongly connected components of $X_a$.
By Lemma~\ref{lem:d-forest}, for each strongly connected component $X_c$~($c=1,2,\ldots,\bar c(X_a)$), we can find a $d$-forest $X_c'\subset X_c$ such that $F_0(X_c')=F_0(X_c)$ and $f_0(X_c')=f_d(X_c')+d$.
We define a $d$-forest $X_a'$ by
\[
X_a'\coloneqq\bigcup_{c=1}^{\bar c(X_a)}X_c'.
\]
Then, we have
\begin{equation}\label{eq:cW_ksh_3}
s=|\Supp_0(a)|
=f_0(X_a)
\le\sum_{c=1}^{\bar c(X_a)}f_0(X_c)
=\sum_{c=1}^{\bar c(X_a)}(f_d(X_c')+d)
=f_d(X_a')+d\cdot\bar c(X_a).
\end{equation}

Now, we introduce a \textit{$d$-simplex-bounding table}, which is a higher-dimensional analog of the \textit{edge-bounding table}~\cite[Chapter~2.1.7]{AGZ11}.
Set $w^j=\widetilde\sg^j_1\sg^j_2\cdots\sg^j_{k_j+1}$ for all $j\in[h]$, and
\[
I\coloneqq\bigcup_{j=1}^h\{j\}\times[k_j].
\]
Let $B=\{b^j_i\}_{(j,i)\in I}$ be a table whose all entries are either zero or one.
We call $B$ a $d$-simplex-bounding table if the following conditions are satisfied:
\begin{enumerate}
\item for all $(j,i)\in I$, if $b^j_i=1$, then $\sg^j_i\cup\sg^j_{i+1}\in F_d(X_a')$;
\item for each $\tau\in F_d(X_a')$, there exist distinct $(j_1,i_1),(j_2,i_2)\in I$ such that $\sg^{j_1}_{i_1}\cup\sg^{j_1}_{i_1+1}=\sg^{j_2}_{i_2}\cup\sg^{j_2}_{i_2+1}=\tau$ and $b^{j_1}_{i_1}=b^{j_2}_{i_2}=1$;
\item for each $\tau\in F_d(X_a')$ and $j\in[h]$, if $\tau\in\Supp_d(w^j)$, then there exists $(j,i)\in I$ such that $\sg^j_i\cup\sg^j_{i+1}=\tau$ and $b^j_i=1$.
\end{enumerate}
At least one $d$-simplex-bounding table exists.
Indeed, if we set $b^j_i=1$ for each $(j,i)\in I$ such that $\sg^j_i\cup\sg^j_{i+1}\in F_d(X_a')$ and $b^j_i=0$ elsewhere, then $B=\{b^j_i\}_{(j,i)\in I}$ is a $d$-simplex-bounding table.
Furthermore, for any $d$-simplex-bounding table $B$,
\begin{equation}\label{eq:bounding_table}
f_d(X_a')\le\frac12\sum_{(j,i)\in I}b^j_i
\end{equation}
because of Condition~(2).
Now, let $B=\{b^j_i\}_{(j,i)\in I}$ be a $d$-simplex-bounding table whose all entries in the $j_0$th row are equal to $1$.
Then, $X_{w^{j_0}}\subset X_a'$ because of Condition~(1).
Noting that $w^{j_0}$ is a closed $(n,d)$-word and $X_a'$ is a $d$-forest, we have $N_{w^{j_0}}(\tau)\ge2$ for any $\tau\in\Supp_d(w^{j_0})$.
From the definition of $\cW_{\bfk,s}^{(h)}$, we can find $(j_0,i_0)\in I$ such that $\sg^{j_0}_{i_0}\cup\sg^{j_0}_{i_0+1}$ appears in the $d$-support of another closed $(n,d)$-word different from $w^{j_0}$.
We then define a new table obtained by replacing the entry one in position $(j_0,i_0)\in I$ by the entry zero.
It is easy to verify that the new table is again a $d$-simplex-bounding table.
Repeating this modification, we eventually find a $d$-simplex-bounding table with zero appearing at least once in every row, which together with~\eqref{eq:bounding_table} implies that
\[
f_d(X_a')
\le\Biggl\lfloor\frac12\sum_{j=1}^h(k_j-1)\Biggr\rfloor
\le\biggl\lfloor\frac{|\bfk|-h}2\biggr\rfloor.
\]
Combining this with~\eqref{eq:cW_ksh_3}, we obtain the second inequality in~\eqref{eq:cW_ksh_2}.
\end{proof}

\section{Central limit theorem for differentiable test functions}\label{sec:CLT_diff_test_funct}
Our goal in this section is to prove a CLT for $\langle L_{H_n},f\rangle$, where $f\colon\R\to\R$ of polynomial growth that is of class $C^2$ on a closed interval.
To this end, the notions of the polynomial-type CLT and the convex concentration property for a sequence of random symmetric matrices are crucial.
In Subsection~\ref{ssec:conc-polyCLT}, we introduce the definitions of these notions.
Furthermore, we state a general theorem that lifts a polynomial-type CLT to a CLT for such $C^2$ test functions.
In Subsection~\ref{ssec:Talagrand}, we provide an estimate of the variance of $\langle L_{H_n},f\rangle$, which is the first condition for the convex concentration property of $H_n$, using Talagrand's concentration inequality.
In Subsection~\ref{ssec:pf_main_thm}, we prove Theorem~\ref{thm:CLT_diff}.

\subsection{Polynomial-type CLT and convex concentration property}\label{ssec:conc-polyCLT}
Throughout this subsection, let $\{Y_n\}_n$ be a sequence of random symmetric matrices, and let $\{s_n\}_n$ be a sequence of positive numbers.
\begin{df}[cf.~{\cite[Definition~11.5]{AZ06}}]\label{df:poly-type_CLT}
$\{Y_n\}_n$ is said to satisfy the \textit{polynomial-type CLT} with normalization $s_n$ if there exists a mean-zero Gaussian process $\{W_k\}_{k=0}^\infty$ such that for any real-valued polynomial function $f(x)=\sum_{k=0}^Ka_kx^k$,
\[
\frac{\langle L_{Y_n},f\rangle-\E\langle L_{Y_n},f\rangle}{s_n}
\xrightarrow[n\to\infty]{d}W_f\coloneqq\sum_{k=0}^Ka_kW_k,
\]
and it holds that
\[
\lim_{n\to\infty}\Var\biggl(\frac{\langle L_{Y_n},f\rangle}{s_n}\biggr)=\E[W_f^2].
\]
\end{df}
\begin{rem}\label{rem:poly-type_CLT}
As seen in~\eqref{eq:multi_moment_00} and~\eqref{eq:multi_moment_000}, the sequence $\{H_n\}_{n>d}$ satisfies a polynomial-type CLT with normalization $s_n=(n^d\{np(1-p)\})^{-1/2}$.
\end{rem}
Next, we introduce the notion of the convex concentration property.
For any Lipschitz function $g\colon\R^N\to\R$, its Lipschitz constant is defined by
\[
\Lip(g)\coloneqq\sup_{x\neq y\in\R^N}\frac{|g(x)-g(y)|}{\|x-y\|_{\R^N}},
\]
where $\|\cdot\|_{\R^N}$ is the Euclidean norm in $\R^N$.
Note that if $g\colon\R\to\R$ is a Lipschitz differentiable function, then $\Lip(g)=\sup_{x\in\R}|g'(x)|$.
Recall that a function $g\colon\R\to\R$ is \textit{of polynomial growth} if there exists a constant $M\ge0$ such that $\lim_{x\to\pm\infty}|g(x)|/|x|^M=0$.
\begin{df}\label{df:concentration}
$\{Y_n\}_{n\in\N}$ is said to satisfy the \textit{convex concentration property} with normalization $s_n$ if the following two conditions are satisfied.
\begin{enumerate}
\item There exists a constant $c>0$ such that for any convex Lipschitz function $g\colon\R\to\R$,
\begin{equation}\label{eq:concentration_1}
\sup_n\Var\biggl(\frac{\langle L_{Y_n},g\rangle}{s_n}\biggr)
\le c\Lip(g)^2.
\end{equation}
\item There exists a constant $M\ge0$ such that for any measurable function $g\colon\R\to\R$ of polynomial growth with $\Supp(g)\subset[-M,M]^c$, it holds that
\begin{equation}\label{eq:concentration_2}
\lim_{n\to\infty}\E\biggl[\biggl(\frac{\langle L_{Y_n},g\rangle}{s_n}\biggr)^2\biggr]=0.
\end{equation}
\end{enumerate}
\end{df}
\begin{rem}
In the definition of the concentration property introduced in~\cite[Definition~11.2]{AZ06}, the convexity of $g$ in~\eqref{eq:concentration_1} is not assumed.
In this sense, the convex concentration property is a slightly weaker version of the concentration property in~\cite{AZ06}.
\end{rem}
Our aim in this subsection is to establish a general theorem that lifts a polynomial-type CLT to a CLT for a larger class of test functions through the convex concentration property.
To this end, we need several lemmas.
In what follows, we write $\|f-g\|_S=\sup_{x\in S}|f(x)-g(x)|$ for any function $f,g\colon\R\to\R$ and $S\subset\R$.
\begin{lem}\label{lem:SW}
Let $a<b$, and let $f\colon\R\to\R$ be a function that is continuous on $[a,b]$.
Then, there exists a sequence $(R_m)_{m\in\N}$ of real-valued polynomial functions such that $\lim_{m\to\infty}\|f-R_m\|_{[a,b]}=0$ and that $R_m(x)<f(x)$ for any $m\in\N$ and $x\in[a,b]$.
\end{lem}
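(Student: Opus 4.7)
The plan is to invoke the Weierstrass approximation theorem and then shift the approximating polynomials downward by a small constant to enforce the strict inequality. This makes the result essentially a cosmetic strengthening of Weierstrass, in which the approximation is made strictly from below at a negligible cost in uniform error.

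More precisely, I would proceed as follows. Fix $m\in\N$. By the Weierstrass approximation theorem applied to $f$ on $[a,b]$, there exists a real-valued polynomial $P_m$ such that
\[
\|f-P_m\|_{[a,b]}<\frac1{2m}.
\]
Define
\[
R_m(x)\coloneqq P_m(x)-\frac1m.
\]
Then $R_m$ is a real-valued polynomial, and for every $x\in[a,b]$,
\[
f(x)-R_m(x)=f(x)-P_m(x)+\frac1m>-\frac1{2m}+\frac1m=\frac1{2m}>0,
\]
so $R_m(x)<f(x)$, while
\[
\|f-R_m\|_{[a,b]}\le\|f-P_m\|_{[a,b]}+\frac1m<\frac3{2m}\xrightarrow[m\to\infty]{}0.
\]
This produces the required sequence.

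There is no genuine obstacle here; the only point to check is that subtracting the fixed positive constant $1/m$ preserves the polynomial character of $R_m$ (immediate) and forces strict inequality uniformly on $[a,b]$ (the computation above). The choice of the gap $1/(2m)$ in the Weierstrass step and the shift $1/m$ is arbitrary; any pair $\eps_m<\dl_m$ with $\dl_m\to0$ works equally well.
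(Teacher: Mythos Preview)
Your proof is correct and follows essentially the same approach as the paper: apply the Weierstrass approximation theorem, then shift the approximating polynomial down by a small constant to force strict inequality from below while preserving uniform convergence. The only cosmetic difference is the choice of constants (the paper shifts by $\eps/2$ after approximating within $\eps/2$, whereas you shift by $1/m$ after approximating within $1/(2m)$).
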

\begin{proof}
Let $\eps>0$.
It suffices to prove that there exists a real-valued polynomial function $R$ such that $\|f-R\|_{[a,b]}<\eps$ and that $R(x)<f(x)$ for any $x\in[a,b]$.
By the Stone--Weierstrass theorem, we can take a real-valued polynomial function $\widetilde R$ such that $\|f-\widetilde R\|_{[a,b]}<\eps/2$.
Define a real-valued polynomial function $R$ by $R(x)\coloneqq \widetilde R(x)-\eps/2$ for any $x\in\R$. 
Then, $\|f-R\|_{[a,b]}\le\|f-\widetilde R\|_{[a,b]}+\eps/2<\eps$.
Furthermore, $R(x)-f(x)=\widetilde R(x)-f(x)-\eps/2\le\|f-\widetilde R\|_{[a,b]}-\eps/2<0$ for any $x\in[a,b]$.
\end{proof}
\begin{lem}\label{lem:poly_approx}
Let $a<b$, and let $f\colon\R\to\R$ be a function that is of class $C^2$ on $[a,b]$.
Then, there exists a sequence $(P_m)_{m\in\N}$ of real-valued polynomial functions such that $\lim_{m\to\infty}\|f'-P'_m\|_{[a,b]}=0$ and that $f-P_m$ is strictly convex on $[a,b]$.
\end{lem}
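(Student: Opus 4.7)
The plan is to apply the preceding Stone--Weierstrass variant (Lemma~\ref{lem:SW}) at the level of $f''$ and then recover $P_m$ by integrating twice. Since $f$ is of class $C^2$ on $[a,b]$, the second derivative $f''$ is continuous on $[a,b]$, so Lemma~\ref{lem:SW} provides real-valued polynomials $R_m$ with $\|f''-R_m\|_{[a,b]}\to 0$ and $R_m(x)<f''(x)$ for every $x\in[a,b]$ and $m\in\N$.

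Next I will define $P_m\colon\R\to\R$ by the closed formula
\[
P_m(x)\coloneqq f(a)+f'(a)(x-a)+\int_a^x\!\!\int_a^t R_m(s)\,ds\,dt,
\]
which is a polynomial because $R_m$ is. A direct differentiation gives $P_m''(x)=R_m(x)$ and $P_m'(x)=f'(a)+\int_a^x R_m(s)\,ds$.

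From the strict inequality $R_m(x)<f''(x)$ on $[a,b]$, the second derivative of $f-P_m$ satisfies $(f-P_m)''(x)=f''(x)-R_m(x)>0$ for every $x\in[a,b]$, so $f-P_m$ is strictly convex on $[a,b]$, giving the second required property. For the uniform derivative bound, the fundamental theorem of calculus yields
\[
f'(x)-P_m'(x)=\int_a^x\bigl(f''(s)-R_m(s)\bigr)\,ds,
\]
so $\|f'-P_m'\|_{[a,b]}\le(b-a)\|f''-R_m\|_{[a,b]}\xrightarrow[m\to\infty]{}0$, which is the first required property.

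There is no real obstacle here; the only point requiring a small amount of care is ensuring that the polynomial approximation of $f''$ lies \emph{strictly below} $f''$ (not just uniformly close), which is exactly what Lemma~\ref{lem:SW} delivers and which translates directly into the strict convexity of $f-P_m$ after two integrations. The choice of initial conditions $P_m(a)=f(a)$ and $P_m'(a)=f'(a)$ is cosmetic and only streamlines the bookkeeping; any other constants of integration would still yield polynomials satisfying the two conclusions.
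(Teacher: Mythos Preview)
The proposal is correct and follows essentially the same approach as the paper: apply Lemma~\ref{lem:SW} to $f''$ to get polynomials $R_m$ strictly below $f''$, integrate twice to define $P_m$, and read off strict convexity of $f-P_m$ from $(f-P_m)''=f''-R_m>0$ and the derivative convergence from $\|f'-P_m'\|_{[a,b]}\le(b-a)\|f''-R_m\|_{[a,b]}$. The only cosmetic difference is the base point of integration (you use $a$, the paper uses an interior point $a<c<b$) and your additional constant $f(a)$, neither of which affects the argument.
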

\begin{proof}
Since $f''$ is continuous on $[a,b]$, we can take a sequence $(R_m)_{m\in\N}$ of real-valued polynomial functions such that $\lim_{m\to\infty}\|f''-R_m\|_{[a,b]}=0$ and that $R_m(x)<f''(x)$ for any $m\in\N$ and $x\in[a,b]$ by Lemma~\ref{lem:SW}.
Set $a<c<b$, and define
\[
Q_m(x)\coloneqq f'(c)+\int_c^xR_m(y)\,dy
\quad\text{and}\quad
P_m(x)\coloneqq\int_c^xQ_m(y)\,dy.
\]
Then, noting that $f'(x)=f'(c)+\int_c^xf''(y)\,dy$, we have
\begin{align*}
\|f'-P'_m\|_{[a,b]}
&=\sup_{x\in[a,b]}|f'(x)-Q_m(x)|\\
&=\sup_{x\in[a,b]}\biggl|\int_c^xf''(y)-R_m(y)\,dy\biggr|\\
&\le(b-a)\|f''-R_m\|_{[a,b]}
\xrightarrow[m\to\infty]{}0.
\end{align*}
Furthermore, $(f-P_m)''(x)=f''(x)-R_m(x)>0$ for any $x\in[a,b]$, which implies that $f-P_m$ is strictly convex on $[a,b]$.
\end{proof}
We also use the following approximation approach, which is often useful to obtain a CLT from a collection of relatively simple CLTs.
\begin{lem}[{\cite[Lemma~2.2]{Tr19}}]\label{lem:approx_CLT}
Let $\{\xi_n\}_{n\in\N}$ and $\{\xi_{n,m}\}_{n,m\in\N}$ be families of mean-zero random variables.
Assume that
\begin{enumerate}
\item for each $m\in\N$, the limit $\sg_m^2\coloneqq\lim_{n\to\infty}\Var(\xi_{n,m})\in[0,\infty)$ exists and $\xi_{n,m}\xrightarrow[n\to\infty]{d}\cN(0,\sg_m^2)$,
\item it holds that $\lim_{m\to\infty}\limsup_{n\to\infty}\Var(\xi_n-\xi_{n,m})=0$.
\end{enumerate}
Then, the limit $\sg^2\coloneqq\lim_{m\to\infty}\sg_m^2\in[0,\infty)$ exists, $\lim_{n\to\infty}\Var(\xi_n)=\sg^2$, and $\xi_n\xrightarrow[n\to\infty]{d}\cN(0,\sg^2)$.
\end{lem}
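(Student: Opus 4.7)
The plan is to establish the three conclusions in succession through a common $L^2$-triangle-inequality approximation, with characteristic functions and L\'evy's continuity theorem handling the distributional statement.

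First I would show $\{\sqrt{\sg_m^2}\}_m$ is Cauchy. Writing $\sg_m\coloneqq\sqrt{\sg_m^2}$ and using the Minkowski inequality in $L^2$ together with condition (1), one has
\begin{align*}
|\sg_{m_1}-\sg_{m_2}|
&\le\limsup_{n\to\infty}\bigl|\sqrt{\Var(\xi_{n,m_1})}-\sqrt{\Var(\xi_{n,m_2})}\bigr|
\le\limsup_{n\to\infty}\sqrt{\Var(\xi_{n,m_1}-\xi_{n,m_2})}\\
&\le\sqrt{\limsup_{n\to\infty}\Var(\xi_n-\xi_{n,m_1})}+\sqrt{\limsup_{n\to\infty}\Var(\xi_n-\xi_{n,m_2})},
\end{align*}
and condition (2) makes the right-hand side arbitrarily small as $m_1,m_2\to\infty$, so $\sg^2\coloneqq\lim_{m\to\infty}\sg_m^2\in[0,\infty)$ exists. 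The same kind of bound,
\begin{equation*}
\bigl|\sqrt{\Var(\xi_n)}-\sg_m\bigr|\le\sqrt{\Var(\xi_n-\xi_{n,m})}+\bigl|\sqrt{\Var(\xi_{n,m})}-\sg_m\bigr|,
\end{equation*}
combined with $\limsup_{n\to\infty}$ (condition (1) kills the last term) followed by $m\to\infty$ (condition (2) kills the first term, and $\sg_m\to\sg$), yields $\lim_{n\to\infty}\Var(\xi_n)=\sg^2$.

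For the distributional convergence, I would compare the characteristic functions $\varphi_n$, $\varphi_{n,m}$, $\varphi_m(t)=e^{-t^2\sg_m^2/2}$, and $\varphi(t)=e^{-t^2\sg^2/2}$ of $\xi_n$, $\xi_{n,m}$, $\cN(0,\sg_m^2)$, and $\cN(0,\sg^2)$. Since $\xi_n-\xi_{n,m}$ has mean zero, $|e^{iu}-e^{iv}|\le|u-v|$ together with Cauchy--Schwarz give
\begin{equation*}
|\varphi_n(t)-\varphi_{n,m}(t)|\le|t|\,\E|\xi_n-\xi_{n,m}|\le|t|\sqrt{\Var(\xi_n-\xi_{n,m})}.
\end{equation*}
Splitting
\begin{equation*}
|\varphi_n(t)-\varphi(t)|\le|\varphi_n(t)-\varphi_{n,m}(t)|+|\varphi_{n,m}(t)-\varphi_m(t)|+|\varphi_m(t)-\varphi(t)|,
\end{equation*}
taking $\limsup_{n\to\infty}$ (the middle term vanishes by condition (1)) and then $m\to\infty$ (the first term vanishes by condition (2), the third by $\sg_m^2\to\sg^2$), one obtains pointwise convergence $\varphi_n(t)\to\varphi(t)$ for every $t\in\R$. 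L\'evy's continuity theorem then delivers $\xi_n\xrightarrow[n\to\infty]{d}\cN(0,\sg^2)$. I do not anticipate a genuine obstacle: this is a standard double-limit argument, and the only point to keep in mind is that one must manipulate $\sqrt{\Var(\,\cdot\,)}$ rather than $\Var(\,\cdot\,)$ itself, so that the Minkowski inequality keeps the three partial errors additive and each can be driven to zero by choosing first $m$ large and then $n$ large.
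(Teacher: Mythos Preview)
Your proof is correct and follows the standard double-limit approximation argument: Minkowski's inequality in $L^2$ to show $\{\sg_m\}$ is Cauchy and that $\Var(\xi_n)\to\sg^2$, followed by a characteristic-function triangle inequality together with L\'evy's continuity theorem for the distributional convergence. There is nothing to compare against here, since the paper does not supply its own proof of this lemma but merely cites it from~\cite[Lemma~2.2]{Tr19}; your argument is exactly the kind of proof one would expect behind that citation.
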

The following theorem provides a sufficient condition under which one can lift a polynomial-type CLT to a CLT for a larger class of test functions.
The proof follows from Lemmas~\ref{lem:poly_approx} and~\ref{lem:approx_CLT} involving the Stone--Weierstrass theorem.
\begin{thm}\label{thm:CLT_poly-diff}
Assume that $\{Y_n\}_n$ satisfies both a polynomial-type CLT and the convex concentration property with normalization $s_n$.
Let $M\ge0$ be the constant in Definition~\ref{df:concentration}.
Then, for any function $f\colon\R\to\R$ of polynomial growth that is of class $C^2$ on $[-M,M]$, there exists a constant $\sg_f^2\ge0$ such that
\[
\lim_{n\to\infty}\Var\biggl(\frac{\langle L_{Y_n},f\rangle)}{s_n}\biggr)
=\sg_f^2
\]
and it holds that
\[
\frac{\langle L_{Y_n},f\rangle-\E\langle L_{Y_n},f\rangle}{s_n}
\xrightarrow[n\to\infty]{d}\cN(0,\sg_f^2).
\]
\end{thm}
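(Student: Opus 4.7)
The plan is to apply Lemma~\ref{lem:approx_CLT} with polynomial approximants provided by Lemma~\ref{lem:poly_approx}, using the polynomial-type CLT to identify the approximating Gaussian laws and the convex concentration property to control the error term. Two hurdles need to be navigated: polynomials are nowhere Lipschitz, and the differences $f_1-P_m$ produced by Lemma~\ref{lem:poly_approx} are convex only on a compact interval, so neither fits condition (1) of Definition~\ref{df:concentration} directly. Both issues are resolved by combining a smooth cutoff of $f$ with a tangent-line extension of the polynomial approximant.

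First, I would fix $\eps>0$ such that $f$ is of class $C^2$ on $(-M-\eps, M+\eps)$ and pick a smooth cutoff $\phi\colon\R\to[0,1]$ equal to $1$ on a neighborhood of $[-M, M]$ and vanishing outside $(-M-\eps', M+\eps')$ for some $\eps'<\eps$, splitting $f=f_1+f_2$ with $f_1\coloneqq\phi f$ of class $C^2$ on $\R$ with compact support and $f_2\coloneqq(1-\phi)f$ of polynomial growth and $\Supp(f_2)\subset[-M,M]^c$. Condition (2) of Definition~\ref{df:concentration} then yields $\lim_{n\to\infty}\Var(\langle L_{Y_n}, f_2\rangle/s_n)=0$.

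Next, I would choose a compact interval $[a,b]\subset(-M-\eps, M+\eps)$ whose interior strictly contains $\Supp(f_1)\cup[-M, M]$ and apply Lemma~\ref{lem:poly_approx} to $f_1$ on $[a, b]$ to obtain polynomials $P_m$ with $\|f_1'-P_m'\|_{[a,b]}\to 0$ and $f_1-P_m$ strictly convex on $[a,b]$. Extending $f_1-P_m$ linearly past $[a,b]$ by its tangent lines at the endpoints produces a function $\tilde h_m$ that is convex on all of $\R$ and Lipschitz with $\Lip(\tilde h_m)\le\|f_1'-P_m'\|_{[a,b]}\to 0$, so condition (1) of Definition~\ref{df:concentration} gives
\[
\Var(\langle L_{Y_n}, \tilde h_m\rangle/s_n)\le c\,\Lip(\tilde h_m)^2
\]
uniformly in $n$. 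Crucially, the residual $r_m\coloneqq f_1-P_m-\tilde h_m$ vanishes on $[a,b]\supset[-M,M]$ and is of polynomial growth with $\Supp(r_m)\subset[-M,M]^c$; hence condition (2) of Definition~\ref{df:concentration} yields $\lim_{n\to\infty}\Var(\langle L_{Y_n}, r_m\rangle/s_n)=0$ for each fixed $m$.

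To close the argument, set $\xi_n\coloneqq(\langle L_{Y_n}, f\rangle-\E\langle L_{Y_n}, f\rangle)/s_n$ and $\xi_{n,m}\coloneqq(\langle L_{Y_n}, P_m\rangle-\E\langle L_{Y_n}, P_m\rangle)/s_n$. Hypothesis (1) of Lemma~\ref{lem:approx_CLT} is immediate from the polynomial-type CLT. For hypothesis (2), the decomposition $f-P_m=\tilde h_m+r_m+f_2$ together with $\Var(X+Y+Z)\le 3(\Var X+\Var Y+\Var Z)$ gives
\[
\limsup_{n\to\infty}\Var(\xi_n-\xi_{n,m})\le 3c\,\Lip(\tilde h_m)^2 \xrightarrow[m\to\infty]{} 0,
\]
so Lemma~\ref{lem:approx_CLT} delivers both the claimed convergence in distribution and the variance convergence, with $\sigma_f^2=\lim_{m\to\infty}\E[W_{P_m}^2]$. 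The principal technical point is arranging $\Supp(r_m)\subset[-M,M]^c$, which is exactly what the tangent-line extension accomplishes thanks to the strict inclusion $[-M,M]\subset(a,b)$.
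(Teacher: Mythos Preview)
Your proposal is correct and follows essentially the same route as the paper: both use Lemma~\ref{lem:poly_approx} to produce polynomials $P_m$ with $f-P_m$ convex on a compact interval, then extend by tangent lines to obtain a globally convex Lipschitz function so that condition~(1) of Definition~\ref{df:concentration} applies, and invoke condition~(2) for the polynomial-growth residuals supported on $[-M,M]^c$, before closing with Lemma~\ref{lem:approx_CLT}. The only cosmetic difference is that you first split $f=f_1+f_2$ via a smooth cutoff and extend the difference $f_1-P_m$, whereas the paper extends $f$ and $P_m$ separately to $\widetilde f$ and $\widetilde P_m$ and works with $\widetilde f-\widetilde P_m$; your version avoids the intermediate random variables $\xi_n,\xi_{n,m}$ for $\widetilde f,\widetilde P_m$ and the final transfer step back to $f$, at the minor cost of introducing the cutoff.
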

\begin{proof}
Let $f\colon\R\to\R$ be a function of polynomial growth that is of class $C^2$ on $[-M,M]$.
We can take $M'>M$ such that $f$ is of class $C^2$ on $[-M',M']$.
By Lemma~\ref{lem:poly_approx}, there exists a sequence $(P_m)_{m\in\N}$ of real-valued polynomial functions such that $\lim_{m\to\infty}\|f'-P'_m\|_{[-M',M']}=0$ and that $f-P_m$ is convex on $[-M',M']$.
Now, we define
\[
\widetilde f(x)\coloneqq\begin{cases}
f(-M')+f'(-M')(x+M')    &\text{if $x<-M'$,}\\
f(x)    &\text{if $-M'\le x\le M'$,}\\
f(M')+f'(M')(x-M')    &\text{if $x>M'$}
\end{cases}
\]
and
\[
\widetilde P_m(x)\coloneqq\begin{cases}
P_m(-M')+P'_m(-M')(x+M')    &\text{if $x<-M'$,}\\
P_m(x)    &\text{if $-M'\le x\le M'$,}\\
P_m(M')+P'_m(M')(x-M')    &\text{if $x>M'$.}
\end{cases}
\]
Since $f-P_m$ is convex on $[-M',M']$, it is easy to verify that $\widetilde f-\widetilde P_m$ is a convex function.
Indeed, $(\widetilde f-\widetilde P_m)'$ is nondecreasing.
We also note that
\begin{equation}\label{eq:CLT_poly-diff_1}
\Lip(\widetilde f-\widetilde P_m)
=\|(\widetilde f-\widetilde P_m)'\|_\R
=\|(f-P_m)'\|_{[-M',M']}.
\end{equation}
Now, we set
\[
\eta_n=\frac{\langle L_{Y_n},f\rangle-\E\langle L_{Y_n},f\rangle}{s_n},
\quad
\eta_{n,m}=\frac{\langle L_{Y_n},P_m\rangle-\E\langle L_{Y_n},P_m\rangle}{s_n}
\]
and
\[
\xi_n=\frac{\langle L_{Y_n},\widetilde f\rangle-\E\langle L_{Y_n},\widetilde f\rangle}{s_n},
\quad
\xi_{n,m}=\frac{\langle L_{Y_n},\widetilde P_m\rangle-\E\langle L_{Y_n},\widetilde P_m\rangle}{s_n}.
\]
By the polynomial-type CLT, there exists a constant $\sg_m^2\ge0$ such that
\begin{equation}\label{eq:CLT_poly-diff_2}
\eta_{n,m}\xrightarrow[n\to\infty]{d}\cN(0,\sg_m^2)
\quad\text{and}\quad
\lim_{n\to\infty}\Var\biggl(\frac{\langle L_{Y_n},P_m\rangle}{s_n}\biggr)=\sg_m^2.
\end{equation}
Since $\Supp(\widetilde P_m-P_m)\subset[-M,M]^c$,~\eqref{eq:concentration_2} yields
\begin{align*}
\|\xi_{n,m}-\eta_{n,m}\|_{L^2}^2
=\E[(\xi_{n,m}-\eta_{n,m})^2]
&=\Var\biggl(\frac{\langle L_{Y_n},\widetilde P_m-P_m\rangle}{s_n}\biggr)\\
&\le\E\biggl[\biggl(\frac{\langle L_{Y_n},\widetilde P_m-P_m\rangle}{s_n}\biggr)^2\biggr]
\xrightarrow[n\to\infty]{}0.
\end{align*}
In particular, $\xi_{n,m}-\eta_{n,m}\xrightarrow[n\to\infty]{d}0$.
Therefore,
\begin{equation}\label{eq:CLT_poly-diff_3}
\xi_{n,m}=\eta_{n,m}+(\xi_{n,m}-\eta_{n,m})\xrightarrow[n\to\infty]{d}\cN(0,\sg_m^2).
\end{equation}
Furthermore, noting that
\[
\biggl|\sqrt{\Var\biggl(\frac{\langle L_{Y_n},\widetilde P_m\rangle}{s_n}\biggr)}-\sqrt{\Var\biggl(\frac{\langle L_{Y_n},P_m\rangle}{s_n}\biggr)}\biggr|=|\|\xi_{n,m}\|_{L^2}-\|\eta_{n,m}\|_{L^2}|
\le\|\xi_{n,m}-\eta_{n,m}\|_{L^2},
\]
we have
\begin{equation}\label{eq:CLT_poly-diff_4}
\lim_{n\to\infty}\Var\biggl(\frac{\langle L_{Y_n},\widetilde P_m\rangle}{s_n}\biggr)=\sg_m^2.
\end{equation}
Let $c\ge0$ be the constant in Definition~\ref{df:concentration}.
Since $\widetilde f-\widetilde P_m$ is a convex Lipschitz function, it follows from~\eqref{eq:concentration_1} and~\eqref{eq:CLT_poly-diff_1} that
\[
\limsup_{n\to\infty}\Var\biggl(\frac{\langle L_{Y_n},\widetilde f-\widetilde P_m\rangle}{s_n}\biggr)
\le c\|(f-P_m)'\|_{[-M',M']}^2.
\]
The right-hand side converges to zero as $m\to\infty$.
Therefore, Lemma~\ref{lem:approx_CLT} implies that the limit $\sg^2\coloneqq\lim_{m\to\infty}\sg_m^2\in[0,\infty)$ exists and that
\[
\xi_n\xrightarrow[n\to\infty]{d}\cN(0,\sg^2)
\quad\text{and}\quad
\lim_{n\to\infty}\Var\biggl(\frac{\langle L_{Y_n},\widetilde f\rangle}{s_n}\biggr)=\sg^2.
\]
Noting that $\Supp(\widetilde f-f)\subset[-M,M]^c$, we have
\[
\eta_n\xrightarrow[n\to\infty]{d}\cN(0,\sg^2)
\quad\text{and}\quad
\lim_{n\to\infty}\Var\biggl(\frac{\langle L_{Y_n},f\rangle}{s_n}\biggr)=\sg^2
\]
in a similar way to obtain both~\eqref{eq:CLT_poly-diff_3} and~\eqref{eq:CLT_poly-diff_4} from~\eqref{eq:CLT_poly-diff_2}.
\end{proof}

\subsection{Talagrand's concentration inequality}\label{ssec:Talagrand}
In this subsection, we prove that the sequence $\{H_n\}_{n>d}$ satisfies Condition~(1) in Definition~\ref{df:concentration} with normalization $s_n=(n^d\{np(1-p)\})^{-1/2}$.
A key tool is Talagrand's concentration inequality~\cite{Tal95}, which allows us to observe that the convex Lipschitz function of independent random variables are not much random.
\begin{thm}[{\cite[Theorem~2.1.13]{Tao12}}]\label{thm:TCI}
Let $K>0$, and let $\{X_i\}_{i=1}^N$ be independent real-valued random variables with $|X_i|\le K$ for all $i=1,2,\ldots,N$.
Let $f\colon\R^N\to\R$ be a convex Lipschitz function with $\Lip(f)\le1$.
Then, there exist constants $c_1,c_2>0$ such that for every $\lm\ge0$,
\[
\P(|f(X_1,X_2,\ldots,X_N)-\M f(X_1,X_2,\ldots,X_N)|\ge\lm K)
\le c_1\exp(-c_2\lm^2).
\]
Here, $\M f(X_1,X_2,\ldots,X_N)$ is a median of $f(X_1,X_2,\ldots,X_N)$.
\end{thm}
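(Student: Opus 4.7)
The plan is to deduce Theorem~\ref{thm:TCI} from Talagrand's convex distance inequality, which asserts that for independent $X_1,\ldots,X_N$ on a product probability space and any non-empty measurable set $A\subset\R^N$,
\[
\P(X \in A)\cdot\P(d_T(X,A)\ge t)\le\exp(-t^2/4),
\]
where $d_T(x,A)\coloneqq\sup_{\alpha\in\R_{\ge 0}^N,\,\|\alpha\|_2=1}\inf_{y\in A}\sum_{i\colon x_i\neq y_i}\alpha_i$ is Talagrand's convex distance. I would take this inequality as a black box, since its standard proof is an induction on $N$ via entropy tensorisation that is essentially orthogonal to what we need here. First I would rescale to reduce to the case $K=1$.

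The heart of the argument is a short deterministic lemma: if $f\colon[-1,1]^N\to\R$ is convex and $1$-Lipschitz and $A\subset[-1,1]^N$ is convex, then
\[
f(x)\le\sup_{y\in A}f(y)+2\,d_T(x,A)\qquad\text{for every }x\in[-1,1]^N.
\]
To prove this I would invoke the LP-dual characterisation $d_T(x,A)=\inf_{\mu}\sqrt{\sum_i\mu(\{y\in A\colon y_i\neq x_i\})^2}$, the infimum being over probability measures $\mu$ on $A$. For any $\mu$ witnessing $d_T(x,A)\le s$, the barycentre $\bar y\coloneqq\int y\,d\mu$ lies in $A$ by convexity of $A$, while the componentwise estimate $|\bar y_i-x_i|\le 2\mu(\{y_i\neq x_i\})$ (using $|y_i-x_i|\le 2$) gives $\|\bar y-x\|_2\le 2s$. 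The $1$-Lipschitz property together with Jensen's inequality $f(\bar y)\le\int f\,d\mu\le\sup_A f$ then yields the claim.

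With this lemma in hand, both tails around the median $m\coloneqq\M f(X)$ follow from appropriate convex sublevel sets. For the upper tail I take $A^+\coloneqq\{f\le m\}$, which is convex with $\P(X\in A^+)\ge 1/2$; the lemma forces $f(x)\ge m+\lambda\Rightarrow d_T(x,A^+)\ge\lambda/2$, and Talagrand's convex distance inequality then yields $\P(f(X)\ge m+\lambda)\le 2\exp(-\lambda^2/16)$. For the lower tail I take the still-convex set $A^-\coloneqq\{f\le m-\lambda\}$; applying the lemma to $A^-$ shows that every $x$ with $f(x)\ge m$ satisfies $d_T(x,A^-)\ge\lambda/2$, and since $\P(f(X)\ge m)\ge 1/2$, Talagrand's inequality now bounds $\P(f(X)\le m-\lambda)=\P(A^-)\le 2\exp(-\lambda^2/16)$. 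Combining the two tails and restoring the factor $K$ produces the stated bound with $c_1=4$ and $c_2=1/16$.

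The main obstacle is the apparent asymmetry between the two tails: convexity of $f$ only helps via Jensen when we average over a sublevel set, not a super-level set, so one cannot literally repeat the upper-tail argument with $\{f\ge m\}$ in place of $\{f\le m\}$. The resolution is to keep a convex sublevel set in the role of $A$ even when proving the \emph{lower} tail, and use the (non-convex) super-level set $\{f\ge m\}$ only to furnish the $1/2$ lower bound needed to close the estimate. Once this asymmetric bookkeeping is in place, the lower tail inherits the same Gaussian rate as the upper tail, and crucially the dimension $N$ drops out entirely.
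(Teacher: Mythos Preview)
The paper does not prove this theorem at all: it is stated as a quotation from Tao's book~\cite{Tao12} and used as a black box to derive Corollary~\ref{cor:TCI}. So there is no ``paper's own proof'' against which to compare your attempt.

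That said, your proposal is a correct and standard derivation of Talagrand's concentration inequality from the convex distance inequality (this is essentially the argument in Tao's book and in Ledoux's monograph). The key points are all in order: the dual/barycentre description of $d_T$, the deterministic comparison $f(x)\le\sup_A f+2\,d_T(x,A)$ for convex $A$ and convex $1$-Lipschitz $f$, and the asymmetric handling of the lower tail by keeping the convex sublevel set $\{f\le m-\lambda\}$ in the role of $A$ while using $\{f\ge m\}$ only to supply the probability-$1/2$ lower bound. One small point worth making explicit: in the lower-tail step you should note that if $A^-=\{f\le m-\lambda\}$ is empty (or has probability zero) there is nothing to prove, so you may assume it is non-empty before invoking the deterministic lemma.
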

See also~\cite[Section~1.5]{Tao11} for the case where $X_i$'s are Bernoulli random variables.
The following is an easy application of Theorem~\ref{thm:TCI}.
\begin{cor}\label{cor:TCI}
Let $K>0$, and let $\{X_i\}_{i=1}^N$ be independent real-valued random variables with $|X_i|\le K$ for all $i=1,2,\ldots,N$.
Let $F\colon\R^N\to\R$ be a convex Lipschitz function.
Then, there exists a constant $C>0$ such that
\[
\Var(F(X_1,X_2,\ldots,X_N))\le CK^2\Lip(F)^2.
\]
\end{cor}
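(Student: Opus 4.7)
The plan is a direct reduction to Theorem~\ref{thm:TCI} via rescaling and the standard tail-integration identity for the second moment. First I would normalize: assuming without loss of generality that $\Lip(F) > 0$, set $G \coloneqq F/\Lip(F)$, so that $G$ is again convex and satisfies $\Lip(G) \le 1$. Then
\[
\Var(F(X_1,\ldots,X_N)) = \Lip(F)^2 \cdot \Var(G(X_1,\ldots,X_N)),
\]
and it suffices to show $\Var(G(X_1,\ldots,X_N)) \le C K^2$ for some universal constant $C > 0$.

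Next, I would apply Theorem~\ref{thm:TCI} to $G$. Writing $Z \coloneqq G(X_1,\ldots,X_N)$ and letting $\M Z$ be a median of $Z$, the inequality yields
\[
\P(|Z - \M Z| \ge \lm K) \le c_1 \exp(-c_2 \lm^2)
\]
for every $\lm \ge 0$, where $c_1, c_2 > 0$ are the absolute constants from Theorem~\ref{thm:TCI}.

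To pass from this tail bound to a variance bound, I would use the layer-cake identity together with the elementary fact that $\Var(Z) \le \E[(Z - \M Z)^2]$, since the variance is minimized by the mean rather than the median. Then
\[
\Var(Z) \le \E[(Z - \M Z)^2] = \int_0^\infty \P(|Z - \M Z| \ge t)\, 2t\, dt.
\]
Making the change of variables $t = \lm K$ and inserting the Talagrand tail bound gives
\[
\Var(Z) \le 2 K^2 \int_0^\infty \lm\, c_1 \exp(-c_2 \lm^2)\, d\lm = \frac{c_1}{c_2} K^2.
\]
Setting $C \coloneqq c_1/c_2$ and multiplying through by $\Lip(F)^2$ yields the conclusion. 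No part of this is a serious obstacle; it is a routine derivation from Theorem~\ref{thm:TCI}, and I would present it in just a few lines.
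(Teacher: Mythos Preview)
Your proof is correct and essentially identical to the paper's: both normalize by $\Lip(F)$, bound the variance by $\E[(Z-\M Z)^2]$, and integrate the Talagrand tail bound to obtain $C=c_1/c_2$. The only cosmetic difference is that the paper writes the layer-cake formula as $\int_0^\infty \P(|Z-\M Z|\ge\sqrt t)\,dt$ rather than $\int_0^\infty 2t\,\P(|Z-\M Z|\ge t)\,dt$, which are of course equivalent.
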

\begin{proof}
When $\Lip(F)=0$, the conclusion is trivial.
Hence, we assume $\Lip(F)>0$ and set $f=F/\Lip(F)$.
Noting that $\Lip(f)\le1$, we take $c_1,c_2>0$ as in Theorem~\ref{thm:TCI}.
Then,
\begin{align*}
\Var(F(X_1,X_2,\ldots,X_N))/\Lip(F)^2
&=\E[\{f(X_1,X_2,\ldots,X_N)-\E f(X_1,X_2,\ldots,X_N)\}^2]\\
&\le\E[\{f(X_1,X_2,\ldots,X_N)-\M f(X_1,X_2,\ldots,X_N)\}^2]\\
&=\int_0^\infty\P(\{f(X_1,X_2,\ldots,X_N)-\M f(X_1,X_2,\ldots,X_N)\}^2\ge t)\,dt\\
&=\int_0^\infty\P(|f(X_1,X_2,\ldots,X_N)-\M f(X_1,X_2,\ldots,X_N)|\ge\sqrt t)\,dt\\
&\le c_1\int_0^\infty\exp(-(c_2/K^2)t)\,dt\\
&=(c_1/c_2)K^2,
\end{align*}
which completes the proof.
\end{proof}

Now, we estimate $\Var(\langle L_{H_n},g\rangle)$ for a given convex Lipschitz function $g\colon\R\to\R$ using Corollary~\ref{cor:TCI}.
For convenience, we assume the Linial--Meshulam complex $Y^d_{n,p}$ is constructed from independent Bernoulli random variables $\{b_{n,\tau}\}_{\tau\in F_d(\cK_n)}$ with parameter $p$:
\[
Y^d_{n,p}=\cK_n^{(d-1)}\cup\{\tau\in F_d(\cK_n)\mid b_{n,\tau}=1\}.
\]
We introduce a function $F_g\colon\R^{F_d(\cK_n)}\to\R$ satisfying that
\begin{equation}\label{eq:Fg}
\langle L_{H_n},g\rangle
=F_g\Bigl(\{b_{n,\tau}\}_{\tau\in F_d(\cK_n)}\Bigr)
\end{equation}
as follows.
Let $\Sym$ be the set of all $F_{d-1}(\cK_n)\times F_{d-1}(\cK_n)$ real symmetric matrices, equipped with a norm defined by
\[
\|H\|\coloneqq\sqrt{\Tr(H^2)}
=\biggl(\sum_{i,j}H_{i,j}^2\biggr)^{1/2}
\]
for any $H\in\Sym$.
We define a map $F_1\colon\R^{F_d(\cK_n)}\to\Sym$ by
\[
\Bigl(F_1\Bigl(\{b_\tau\}_{\tau\in F_d(\cK_n)}\Bigr)\Bigr)_{\sg,\sg'}\coloneqq\begin{cases}
\sgn(\sg,\sg')b_{\sg\cup\sg'}	&\text{if $\sg\cup\sg'\in F_d(\cK_n)$,}\\
0			&\text{otherwise}
\end{cases}
\]
for any $\{b_\tau\}_{\tau\in F_d(\cK_n)}\in\R^{F_d(\cK_n)}$ and $\sg,\sg'\in F_{d-1}(\cK_n)$.
Note that $F_1\Bigl(\{b_{n,\tau}\}_{\tau\in F_d(\cK_n)}\Bigr)=A_{d-1}(Y^d_{n,p})$.
We also define a map $F_2\colon\Sym\to\Sym$ by
\[
F_2(A)\coloneqq\frac1{\sqrt{np(1-p)}}(A-\E[A_{d-1}(Y^d_{n,p})])
\]
for any $A\in\Sym$.
Then, $(F_2\circ F_1)\Bigl(\{b_{n,\tau}\}_{\tau\in F_d(\cK_n)}\Bigr)=F_2\bigl(A_{d-1}(Y^d_{n,p})\bigr)=H_n$.
Next, we define a function $F_3\colon\Sym\to\R^{\binom nd}$ by
\[
F_3(H)\coloneqq\Bigl(\lm_1[H],\lm_2[H],\ldots,\lm_{\binom nd}[H]\Bigr)
\]
for any $H\in\Sym$.
Here, $\lm_1[H]\ge\lm_2[H]\ge\ldots\ge\lm_{\binom nd}[H]$ are the real eigenvalues of $H$.
Furthermore, we define a function $F_4\colon\R^{\binom nd}\to\R$ by
\[
F_4\Bigl(\lm_1,\lm_2,\ldots,\lm_{\binom nd}\Bigr)\coloneqq\frac1{\binom nd}\sum_{i=1}^{\binom nd}g(\lm_i)
\]
for any $\Bigl(\lm_1,\lm_2,\ldots,\lm_{\binom nd}\Bigr)\in\R^{\binom nd}$.
Obviously, $F_g\coloneqq F_4\circ F_3\circ F_2\circ F_1$ satisfies~\eqref{eq:Fg}.
\begin{lem}\label{lem:Lip_const}
Let $g\colon\R\to\R$ be a convex Lipschitz function.
Then, the function $F_g\colon\R^{F_d(\cK_n)}\to\R$ defined above is convex Lipschitz, and it holds that
\[
\Lip(F_g)
\le\sqrt{\frac{(d+1)d}{\binom nd\{np(1-p)\}}}\Lip(g).
\]
\end{lem}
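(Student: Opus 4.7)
The plan is to exploit the decomposition $F_g = F_4 \circ F_3 \circ F_2 \circ F_1$ and analyze convexity and Lipschitzness stagewise. For convexity, note that $F_1$ is linear and $F_2$ is affine, so $F_2 \circ F_1$ is affine. It therefore suffices to show that $F_4 \circ F_3\colon\Sym \to \R$, given by $H \mapsto \binom{n}{d}^{-1} \sum_i g(\lm_i[H]) = \binom{n}{d}^{-1}\Tr g(H)$, is convex on $\Sym$. This is a classical spectral convexity fact: for any convex $g\colon\R\to\R$, the trace function $H \mapsto \Tr g(H)$ is convex on real symmetric matrices (a consequence of the Courant--Fischer min--max principle, or of writing $g$ as a supremum of affine functions and applying the spectral theorem). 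Precomposition with the affine map $F_2 \circ F_1$ preserves convexity.

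For the Lipschitz estimate I will bound the Lipschitz constants of $F_2 \circ F_1$ and $F_4 \circ F_3$ separately with respect to the Euclidean norm on $\R^{F_d(\cK_n)}$, the norm $\|\cdot\|$ on $\Sym$, and the absolute value on $\R$, and then multiply them. To control $\Lip(F_2 \circ F_1)$, I observe that each coordinate $(b - b')_\tau$ for $\tau \in F_d(\cK_n)$ appears (up to sign) in every off-diagonal entry $(F_1(b) - F_1(b'))_{\sg,\sg'}$ with $\sg \cup \sg' = \tau$, and nowhere else. Since each $d$-simplex $\tau$ has $d+1$ many $(d-1)$-dimensional faces, the number of ordered pairs of distinct $(d-1)$-faces of $\tau$ whose union equals $\tau$ is $(d+1)d$, which yields $\|F_1(b) - F_1(b')\|^2 = (d+1)d\,\|b - b'\|_{\R^{F_d(\cK_n)}}^2$; dividing by $\sqrt{np(1-p)}$ (from $F_2$) then gives $\Lip(F_2\circ F_1) \le \sqrt{(d+1)d/(np(1-p))}$.

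For $\Lip(F_4 \circ F_3)$, given $H, H' \in \Sym$, I use the Lipschitz hypothesis on $g$ together with the Cauchy--Schwarz inequality to write
\[
|F_4(F_3(H)) - F_4(F_3(H'))| \le \frac{\Lip(g)}{\binom{n}{d}} \sum_{i=1}^{\binom{n}{d}} |\lm_i[H] - \lm_i[H']| \le \frac{\Lip(g)}{\sqrt{\binom{n}{d}}} \Biggl(\sum_{i=1}^{\binom{n}{d}} (\lm_i[H] - \lm_i[H'])^2\Biggr)^{1/2},
\]
and then invoke the Hoffman--Wielandt inequality to bound the last sum by $\Tr((H - H')^2) = \|H - H'\|^2$. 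Composing the two stagewise Lipschitz bounds reproduces exactly the claimed factor $\sqrt{(d+1)d/(\binom{n}{d}\,np(1-p))}\,\Lip(g)$. The only non-routine inputs are the spectral convexity of $H \mapsto \Tr g(H)$ and the Hoffman--Wielandt inequality; both are standard and can be cited, so no step here poses a serious obstacle.
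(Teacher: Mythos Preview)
Your proof is correct and essentially the same as the paper's: the paper also decomposes $F_g = F_4 \circ F_3 \circ F_2 \circ F_1$, bounds each Lipschitz constant separately (the same $(d+1)d$ count for $F_1$, Hoffman--Wielandt for $F_3$, Cauchy--Schwarz for $F_4$), and invokes Klein's lemma---which is exactly the spectral convexity fact you cite---for convexity of $H\mapsto\Tr g(H)$. The only cosmetic difference is that you group the factors as $(F_4\circ F_3)$ and $(F_2\circ F_1)$ rather than treating all four individually.
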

\begin{proof}
Since
\begin{align*}
\Bigl\|F_1\Bigl(\{b_\tau\}_{\tau\in F_d(\cK_n)}\Bigr)-F_1\Bigl(\{b'_\tau\}_{\tau\in F_d(\cK_n)}\Bigr)\Bigr\|^2
&=\sum_{\tau\in F_d(\cK_n)}\sum_{\substack{\sg,\sg'\in F_{d-1}(\cK_n)\\\sg\cup\sg'=\tau}}(b_\tau-b'_\tau)^2\\
&=(d+1)d\sum_{\tau\in F_d(\cK_n)}(b_\tau-b'_\tau)^2\\
&=(d+1)d\|\{b_\tau\}_{\tau\in F_d(\cK_n)}-\{b'_\tau\}_{\tau\in F_d(\cK_n)}\|_{\R^{F_d(\cK_n)}}^2
\end{align*}
for any $\{b_\tau\}_{\tau\in F_d(\cK_n)},\{b'_\tau\}_{\tau\in F_d(\cK_n)}\in\R^{F_d(\cK_n)}$, we have $\Lip(F_1)=\sqrt{(d+1)d}$.
Obviously, $\Lip(F_2)=\{np(1-p)\}^{-1/2}$.
By the Hoffman--Wielandt inequality~\cite[Lemma~2.1.19]{AGZ11}, we have $\Lip(F_3)\le1$.
Furthermore, we can easily verify that
\[
\Lip(F_4)\le\binom nd^{-1/2}\Lip(g).
\]
Therefore,
\[
\Lip(F_g)
\le \Lip(F_4)\Lip(F_3)\Lip(F_2)\Lip(F_1)
\le\sqrt{\frac{(d+1)d}{\binom nd\{np(1-p)\}}}\Lip(g).
\]

Furthermore, by Klein's lemma (see, e.g.,~\cite[Lemma 4.4.12]{AGZ11}), the function
\[
\Sym\ni H\mapsto\sum_{i=1}^{\binom nd}g(\lm_i[H])\in\R
\]
is convex.
Therefore, noting that $F_1$ is a linear function, we can easily verify that $F_g=F_4\circ F_3\circ F_2\circ F_1$ is also convex.
\end{proof}

Combining Corollary~\ref{cor:TCI} with Lemma~\ref{lem:Lip_const}, we can prove that the sequence $\{H_n\}_{n>d}$ satisfies Condition~(1) in Definition~\ref{df:concentration} with normalization $s_n=(n^d\{np(1-p)\})^{-1/2}$ as follows.
\begin{lem}\label{lem:cons_condi1}
There exists a constant $c>0$ such that for any convex Lipschitz function $g\colon\R\to\R$,
\[
\sup_{n>d}n^d\{np(1-p)\}\Var(\langle L_{H_n},g\rangle)
\le c\Lip(g)^2.
\]
\end{lem}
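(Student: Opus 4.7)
The plan is to combine the two tools built up in this subsection—Corollary~\ref{cor:TCI} (Talagrand's concentration inequality, specialized to variance) and Lemma~\ref{lem:Lip_const} (Lipschitz bound for the functional representation $F_g$)—into a direct chain of inequalities. First I would exploit the representation $\langle L_{H_n},g\rangle = F_g\bigl(\{b_{n,\tau}\}_{\tau\in F_d(\cK_n)}\bigr)$ from \eqref{eq:Fg}, where the $b_{n,\tau}$ are i.i.d.\ Bernoulli random variables with parameter $p$ (in particular $|b_{n,\tau}|\le 1$), and observe that by Lemma~\ref{lem:Lip_const}, $F_g$ is convex and Lipschitz with
\[
\Lip(F_g) \le \sqrt{\frac{(d+1)d}{\binom nd\{np(1-p)\}}}\,\Lip(g).
\]

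Next I would apply Corollary~\ref{cor:TCI} with $K=1$ and $F=F_g$ to obtain a universal constant $C>0$ (independent of $n$ and $g$) such that
\[
\Var(\langle L_{H_n},g\rangle)
= \Var(F_g(\{b_{n,\tau}\}))
\le C\,\Lip(F_g)^2
\le \frac{C(d+1)d}{\binom nd\{np(1-p)\}}\,\Lip(g)^2.
\]
Multiplying by $n^d\{np(1-p)\}$ turns this into
\[
n^d\{np(1-p)\}\,\Var(\langle L_{H_n},g\rangle)
\le C(d+1)d\cdot\frac{n^d}{\binom nd}\,\Lip(g)^2.
\]

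To finish, I only need to observe that the ratio $n^d/\binom nd = d!\prod_{i=0}^{d-1}(1-i/n)^{-1}$ is monotonically decreasing in $n$ and converges to $d!$ as $n\to\infty$, so it is uniformly bounded over all $n>d$ by a constant $C_d$ depending only on $d$ (explicitly, by its value at $n=d+1$, which is $(d+1)^{d-1}$). Taking $c \coloneqq C(d+1)d\cdot C_d$ then yields the desired uniform bound.

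There is no real obstacle here—everything needed has already been proved. The only point requiring a moment of care is that the constant $C$ in Corollary~\ref{cor:TCI} comes from the universal constants $c_1,c_2$ of Theorem~\ref{thm:TCI} and therefore does not depend on $n$ or on the dimension of the ambient product space, which is what makes the whole estimate uniform in $n$.
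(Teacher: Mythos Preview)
Your proposal is correct and follows essentially the same approach as the paper: apply Corollary~\ref{cor:TCI} with $K=1$ to the convex Lipschitz function $F_g$, insert the Lipschitz bound from Lemma~\ref{lem:Lip_const}, and bound $n^d/\binom nd$ by $(d+1)^{d-1}$, yielding the same constant $c = C(d+1)^d d$.
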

\begin{proof}
We take a constant $C>0$ as in Corollary~\ref{cor:TCI} with $K=1$, independent real-valued random variables $\{b_{n,\tau}\}_{\tau\in F_d(\cK_n)}$, and $F=F_g$.
Then, for any $n>d$,
\begin{align*}
n^d\{np(1-p)\}\Var(\langle L_{H_n},g\rangle)
&=n^d\{np(1-p)\}\Var\bigl(F_g\bigl(\{b_{n,\tau}\}_{\tau\in F_d(\cK_n)}\bigr)\bigr)\\
&\le Cn^d\{np(1-p)\}\Lip(F_g)^2\\
&\le C(d+1)d\frac{n^d}{\binom nd}\Lip(g)^2\\
&\le C(d+1)^dd\Lip(g)^2.
\end{align*}
The third line follows from Lemma~\ref{lem:Lip_const}.
For the fourth line, we note that $n^d/\binom nd\le(d+1)^{d-1}$ for any $n>d$.
\end{proof}

\subsection{Proof of Theorem~\ref{thm:CLT_diff}}\label{ssec:pf_main_thm}
In this subsection, we prove Theorem~\ref{thm:CLT_diff}.
As mentioned in Remark~\ref{df:poly-type_CLT}, the sequence $\{H_n\}_{n>d}$ satisfies a polynomial-type CLT with normalization $s_n=(n^d\{np(1-p)\})^{-1/2}$.
Furthermore, Lemma~\ref{lem:cons_condi1} shows that $\{H_n\}_{n>d}$ satisfies Condition~(1) in Definition~\ref{df:concentration} with the normalization $s_n$.
Hence, it only remains to check Condition~(2) in Definition~\ref{df:concentration} for applying Theorem~\ref{thm:CLT_poly-diff}.
For this purpose, a more precise estimate of $|\cW_{k,s}|$ than that given in Lemma~\ref{lem:cW_ks}(2) is crucial.
Such estimate was investigated in~\cite[Section~5]{KR17} involving a higher-dimensional generalization of the classical notion of FK sentences~\cite[Section~2.1.6]{AGZ11}.
Although we adopt slightly different definitions of the $(n,d)$-words and equivalence relation among them from those in~\cite{KR17}, almost the same estimate as~\cite[Proposition~5.3]{KR17} still holds by an appropriate modification of the proof.
\begin{prop}\label{prop:KR17_prop5.3}
For every $k\ge2$ and $d+1\le s\le\lfloor k/2\rfloor+d$, it holds that
\[
|\cW_{k,s}|\le C_d(2\sqrt d)^k\sum_{m=0}^{k-2(s-d)}\frac{(C_dk^3)^m}{m!}.
\]
Here, $C_d$ is a constant depending only on $d$.
\end{prop}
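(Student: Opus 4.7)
The plan is to adapt the Foata--Krieger-type encoding used in the proof of~\cite[Proposition~5.3]{KR17}. The strategy is to decompose each representative $w=\widetilde{\sg_1}\sg_2\cdots\sg_{k+1}\in\cW_{k,s}$ into a combinatorial \emph{skeleton} capturing the tree-like part plus a small amount of \emph{excess} data recording the deviations from tree-like behavior, then to count skeletons and excess data separately.

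First, I would classify each of the $k$ transitions $\sg_i\to\sg_{i+1}$ as either a \emph{tree up-step}, meaning that $\sg_{i+1}$ introduces a vertex not already in $\sg_1\cup\cdots\cup\sg_i$; a \emph{tree down-step}, meaning a backtracking move that undoes the most recent unmatched tree up-step in a last-in-first-out fashion; or an \emph{excess step}, meaning any other transition. Since exactly $s-d$ new vertices are introduced throughout the walk, the number of tree up-steps is $s-d$, and since $w$ is closed the tree up- and down-steps pair off, so the number of tree down-steps is at most $s-d$ and the number $m$ of excess steps satisfies $0\le m\le k-2(s-d)$.

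Second, I would count skeletons. Ignoring the excess steps, the tree portion traces out a rooted planar tree with at most $s-d$ edges, each labelled by an integer in $\{1,\dots,d\}$ indicating which of the $d$ vertex positions of the running ordered $(d-1)$-simplex is overwritten; this is precisely the combinatorial object used in the bijection in Lemma~\ref{lem:KR17_Lem3.11}(3). There are at most $d^{s-d}\cC_{s-d}$ such skeletons, and using the Catalan bound $\cC_r\le 4^r$ together with $s-d\le k/2$ one obtains
\[
d^{s-d}\cC_{s-d}\le(4d)^{s-d}\le(4d)^{k/2}=(2\sqrt d)^k.
\]
Third, I would account for the excess data. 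Specifying $m$ excess transitions amounts to choosing their unordered set of positions among the $k$ steps (a factor of at most $\binom{k}{m}\le k^m/m!$) together with, for each excess step, a label in $\{1,\dots,d\}$ and a vertex target already present in $\Supp_0(w)$; since $|\Supp_0(w)|=s\le k/2+d$ and $k\ge 2$, the per-step contribution is bounded by a quantity of order $k^2$ up to a factor depending only on $d$, yielding an overall factor of $(C_dk^3)^m/m!$ after folding in the ordering and other bookkeeping data that the encoding requires. Summing over $m=0,1,\ldots,k-2(s-d)$ and absorbing the factor for selecting the initial ordered simplex $\widetilde{\sg_1}$ within its equivalence class into $C_d$ yields the claimed bound.

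The main obstacle is to ensure that the encoding is injective so that no word is over-counted; the $1/m!$ denominator in particular requires treating the excess steps as an unordered subset of transition indices, and the pairing defining tree down-steps has to be chosen deterministically to avoid ambiguity. The argument of~\cite{KR17} handles this explicitly via the FK-type encoding, and I only need to verify that our slightly different convention---ordering only the initial simplex rather than orienting every simplex---does not affect the counting. This holds because both conventions yield equivalence classes of the same cardinality $n(n-1)\cdots(n-s+1)$, as recorded in Lemma~\ref{lem:cW_ks}(3) and in~\cite[Claim~3.10]{KR17}, so the two bounds differ only by a multiplicative constant depending on $d$ that can be absorbed into $C_d$.
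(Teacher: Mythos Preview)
The paper does not give a self-contained proof of this proposition; it merely states it and remarks that the argument of~\cite[Proposition~5.3]{KR17} carries over after ``an appropriate modification'' to account for the slightly different definition of $(n,d)$-words. Your sketch is precisely that FK-type encoding argument, and your final paragraph already isolates the one point that needs checking---that the two conventions give equivalence classes of identical size---so your proposal matches the paper's approach.
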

%

Using Proposition~\ref{prop:KR17_prop5.3}, we will prove Condition~(2) in Definition~\ref{df:concentration} with normalization $s_n=(n^d\{np(1-p)\})^{-1/2}$.
\begin{lem}\label{lem:cons_condi2}
Assume that $np(1-p)=\om((\log n)^4)$.
Then, for any measurable function $g\colon\R\to\R$ of polynomial growth with $\Supp(g)\subset[-2\sqrt d,2\sqrt d]^c$, it holds that
\[
\lim_{n\to\infty}n^d\{np(1-p)\}\E[\langle L_{H_n},g\rangle^2]=0.
\]
\end{lem}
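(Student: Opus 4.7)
\medskip
The plan is to exploit two facts: first, the support of $g$ is at positive distance from the spectral support $[-2\sqrt d,2\sqrt d]$, and second, under the stronger assumption $np(1-p)=\om((\log n)^4)$ the Knowles--Rosenthal eigenvalue-confinement result for $A_{d-1}(Y^d_{n,p})$ yields, after the centering and rescaling that define $H_n$, concentration of $\|H_n\|_\mathrm{op}$ near $2\sqrt d$ with superpolynomial probability. Since $\Supp(g)$ is closed in $\R$ and disjoint from the compact set $[-2\sqrt d,2\sqrt d]$, there exists $\eps_g>0$ with
\[
\Supp(g)\subset(-\infty,-2\sqrt d-\eps_g]\cup[2\sqrt d+\eps_g,\infty).
\]

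I would first invoke the eigenvalue-location result of Knowles--Rosenthal discussed in the introduction (with a sharper version in Leibzirer--Rosenthal): under $\lim_{n\to\infty}np(1-p)/(\log n)^4=\infty$, for every $\dl>0$ and every $K>0$ one has $\P(\|H_n\|_\mathrm{op}>2\sqrt d+\dl)\le n^{-K}$ for all $n$ sufficiently large. Taking $\dl=\eps_g/2$ and defining the event $E_n:=\{\|H_n\|_\mathrm{op}\le 2\sqrt d+\dl\}$, on $E_n$ every eigenvalue of $H_n$ lies in $[-2\sqrt d-\dl,2\sqrt d+\dl]$, which is disjoint from $\Supp(g)$. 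Hence $g(\lm_i[H_n])=0$ for every $i$, so that $\langle L_{H_n},g\rangle=0$ identically on $E_n$, and therefore $\E[\langle L_{H_n},g\rangle^2]=\E[\langle L_{H_n},g\rangle^2\1_{E_n^c}]$.

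To bound the contribution from the rare event $E_n^c$, I would combine a deterministic polynomial bound on $\|H_n\|_\mathrm{op}$ with the polynomial growth of $g$. Since each entry of $H_n$ is bounded by $2/\sqrt{np(1-p)}$ in absolute value and at most $O(n^{d+1})$ entries are nonzero, the Frobenius norm gives $\|H_n\|_\mathrm{op}\le\|H_n\|_F=O(\sqrt{n^d/(p(1-p))})$, so $\|H_n\|_\mathrm{op}$ is almost surely at most polynomial in $n$. Combined with $|g(x)|\le C(1+|x|^M)$ (assuming without loss of generality that $g$ is locally bounded, so that $\langle L_{H_n},g\rangle$ makes pointwise sense on the spectrum), this yields $\langle L_{H_n},g\rangle^2\le C'(1+\|H_n\|_\mathrm{op}^{2M})\le n^{\gm(d,M)}$ almost surely. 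Therefore $\E[\langle L_{H_n},g\rangle^2\1_{E_n^c}]\le n^{\gm(d,M)-K}$, and choosing $K$ large enough relative to $d$, $M$ makes this $o((n^d\{np(1-p)\})^{-1})$, which is exactly what is needed after multiplying by $n^d\{np(1-p)\}\le n^{d+1}$.

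The main obstacle I anticipate is making the spectral-confinement step fully rigorous for $H_n$ itself: the Knowles--Rosenthal bound is stated for $A_{d-1}(Y^d_{n,p})$ and places its eigenvalues in the two separate intervals $\sqrt{np(1-p)}[-2\sqrt d-\eps,2\sqrt d+\eps]$ and $np+[-7d,7d]$, so one must verify that after the centering by $pA_{d-1}(\cK_n)$ and rescaling by $1/\sqrt{np(1-p)}$ the ``trivial'' outlier block of size $\sim np$ is indeed absorbed, leaving all eigenvalues of $H_n$ in $[-2\sqrt d-\dl,2\sqrt d+\dl]$ with probability $\ge 1-n^{-K}$ for arbitrary $K$; the remainder of the argument is then bookkeeping on the deterministic norm bound.
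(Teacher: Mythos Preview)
Your approach is correct but takes a different route from the paper's. The paper never invokes a spectral-confinement statement as a black box; instead it works directly with the moment bound of Proposition~\ref{prop:KR17_prop5.3}. Concretely, the paper chooses an integer-valued function $k(n)=\om(\log n)$ with $k(n)=o\bigl((np(1-p))^{1/4}\bigr)$, uses Cauchy--Schwarz and the support/growth hypothesis on $g$ to dominate $n^d\{np(1-p)\}\E[\langle L_{H_n},g\rangle^2]$ by a constant times $n^d\{np(1-p)\}(2\sqrt d+\eps/2)^{-2k(n)}\E\langle L_{H_n},x^{2k(n)}\rangle$, and then bounds $\E\langle L_{H_n},x^{2k(n)}\rangle$ via Proposition~\ref{prop:KR17_prop5.3}, obtaining $\E\langle L_{H_n},x^{2k(n)}\rangle\le 2C_d(2\sqrt d)^{2k(n)}\exp\bigl(8C_dk(n)^3/\sqrt{np(1-p)}\bigr)$. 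The decaying ratio $\bigl(2\sqrt d/(2\sqrt d+\eps/2)\bigr)^{2k(n)}$ then kills the polynomial prefactor $n^d\{np(1-p)\}$ because $k(n)=\om(\log n)$, while the exponential factor stays bounded because $k(n)^3=o(\sqrt{np(1-p)})$.

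Your good-event/bad-event split packages the very same moment computation inside the Knowles--Rosenthal confinement result: their proof that $\|H_n\|_{\mathrm{op}}\le 2\sqrt d+\dl$ with superpolynomially small failure probability is precisely a Markov-inequality consequence of the high-moment bound behind Proposition~\ref{prop:KR17_prop5.3}, so the two arguments share the same engine. Regarding the obstacle you flag: it is not a real difficulty, because Knowles--Rosenthal in fact prove the operator-norm bound for $H_n$ first (via moments of $H_n$, exactly as in~\eqref{eq:KR17_Lem3.2_1} and Proposition~\ref{prop:KR17_prop5.3}) and only then deduce the two-interval statement for $A_{d-1}(Y^d_{n,p})$ by adding back $pA_{d-1}(\cK_n)$; you should cite the $H_n$-bound directly rather than attempt the reverse transfer. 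What your approach buys is a clean conceptual decomposition, at the price of having to extract (or cite) the quantitative tail $\P(\|H_n\|_{\mathrm{op}}>2\sqrt d+\dl)\le n^{-K}$ from the literature; the paper's approach is entirely self-contained once Proposition~\ref{prop:KR17_prop5.3} is in hand, and never needs to name the confinement event.
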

\begin{proof}
Noting that $np(1-p)=\om((\log n)^4)$, we take an integer-valued function $k(n)$ so that $k(n)=\om(\log n)$ and $k(n)=o(\{np(1-p)\}^{1/4})$.
Let $g\colon\R\to\R$ be a measurable function of polynomial growth with $\Supp(g)\subset[-2\sqrt d,2\sqrt d]^c$.
Noting that $\Supp(g)$ is a closed set, we take $\eps>0$ such that $\Supp(g)\subset[-2\sqrt d-\eps,2\sqrt d+\eps]^c$.
Since $g$ is of polynomial growth, there exist constants $c_1,c_2\in\N$ such that
\begin{equation}\label{eq:condi2_cons_1}
|g(x)|\le c_1|x|^{c_2}
\end{equation}
holds for any $|x|>2\sqrt d+\eps$.
Also, we can take $N\in\N$ such that $n\ge N$ implies that
\begin{equation}\label{eq:condi2_cons_2}
\biggl(\frac{|x|}{2\sqrt d+\eps/2}\biggr)^{k(n)}\ge|x|^{c_2}
\end{equation}
for any $|x|>2\sqrt d+\eps$.
We may also assume that $n\ge N$ implies $np(1-p)\ge2$ for later use.
Then, for $n\ge N$, we have
\begin{align*}
n^d\{np(1-p)\}\E[\langle L_{H_n},g\rangle^2]
&=\frac{n^d\{np(1-p)\}}{\binom nd^2}\E\biggl[\biggl(\sum_{i=1}^{\binom nd}g(\lm_i[H_n])\biggr)^2\biggr]\\
&\le\frac{n^d\{np(1-p)\}}{\binom nd}\E\biggl[\sum_{i=1}^{\binom nd}g(\lm_i[H_n])^2\biggr]\\
&\le\frac{c_1^2n^d\{np(1-p)\}}{\binom nd}\E\biggl[\sum_{i=1}^{\binom nd}\lm_i[H_n]^{2c_2}\1_{\{|\lm_i[H_n]|>2\sqrt d+\eps\}}\biggr]\\
&\le\frac{c_1^2n^d\{np(1-p)\}}{\binom nd}\E\biggl[\sum_{i=1}^{\binom nd}\biggl(\frac{\lm_i[H_n]}{2\sqrt d+\eps/2}\biggr)^{2k(n)}\biggr]\\
&=\frac{c_1^2n^d\{np(1-p)\}}{(2\sqrt d+\eps/2)^{2k(n)}}\E\langle L_{H_n},x^{2k(n)}\rangle.
\end{align*}
The second line follows from the Cauchy--Schwarz inequality.
To derive the third line, we used $\Supp(g)\subset[-(2\sqrt d+1),2\sqrt d+1]^c$ and~\eqref{eq:condi2_cons_1}.
The fourth line is obtained by~\eqref{eq:condi2_cons_2}.

Now, let $n\ge N$ and $k\in\N$ be fixed.
From~\eqref{eq:KR17_Lem3.2_1} and~\eqref{eq:KR17_Lem3.2_2}, we have
\begin{align*}
\E\langle L_{H_n},x^{2k}\rangle
=\sum_{s=d+1}^{k+d}\sum_{w\in\cW_{2k,s}}\frac{(n-d)(n-d-1)\cdots(n-s+1)}{\{np(1-p)\}^k}\bar T_n(w)
\le\sum_{s=d+1}^{k+d}\frac{|\cW_{2k,s}|}{\{np(1-p)\}^{k+d-s}}.
\end{align*}
From Proposition~\ref{prop:KR17_prop5.3}, the right-hand side of the above equation is bounded above by
\begin{align*}
&C_d(2\sqrt d)^{2k}\sum_{s=d+1}^{k+d}\sum_{m=0}^{2k-2(s-d)}\frac{(8C_dk^3)^m}{m!\{np(1-p)\}^{k+d-s}}\\
&=C_d(2\sqrt d)^{2k}\sum_{m=0}^{2(k-1)}\sum_{s=d+1}^{k+d-\lceil m/2\rceil}\frac{(8C_dk^3)^m}{m!\{np(1-p)\}^{k+d-s}}\\
&\le C_d(2\sqrt d)^{2k}\frac{np(1-p)}{np(1-p)-1}\sum_{m=0}^{2(k-1)}\frac1{m!}\biggl(\frac{8C_dk^3}{\sqrt{np(1-p)}}\biggr)^m\\
&\le C_d(2\sqrt d)^{2k}\frac{np(1-p)}{np(1-p)-1}\exp\biggl(\frac{8C_dk^3}{\sqrt{np(1-p)}}\biggr)\\
&\le2C_d(2\sqrt d)^{2k}\exp\biggl(\frac{8C_dk^3}{\sqrt{np(1-p)}}\biggr).
\end{align*}
For the last line, we used $np(1-p)\ge2$.
Combining the above estimates yields
\begin{align*}
&n^d\{np(1-p)\}\E[\langle L_{H_n},g\rangle^2]\\
&\le\frac{c_1^2n^d\{np(1-p)\}}{(2\sqrt d+\eps/2)^{2k(n)}}2C_d(2\sqrt d)^{2k(n)}\exp\biggl(\frac{8C_dk(n)^3}{\sqrt{np(1-p)}}\biggr)\\
&\le2C_dc_1^2n^{d+1}\biggl(1+\frac\eps{4\sqrt d}\biggr)^{-2k(n)}\exp\biggl(\frac{8C_dk(n)^3}{\sqrt{np(1-p)}}\biggr)\\
&=\exp\biggl(\log(2C_dc_1^2)+(d+1)\log n+2k(n)\biggr\{\frac{4C_dk(n)^2}{\sqrt{np(1-p)}}-\log\biggl(1+\frac\eps{4\sqrt d}\biggr)\biggr\}\biggr)
\end{align*}
for $n\ge N$.
Obviously, the right-hand side converges to zero as $n\to\infty$ since $k(n)=\om(\log n)$ and $k(n)^2=o\bigl(\sqrt{np(1-p)}\bigr)$.
\end{proof}

Finally, we prove Theorem~\ref{thm:CLT_diff}.
\begin{proof}[Proof of Theorem~\ref{thm:CLT_diff}]
As seen in Remark~\ref{rem:poly-type_CLT}, Lemmas~\ref{lem:cons_condi1} and~\ref{lem:cons_condi2}, the sequence $\{H_n\}_{n>d}$ satisfies both a polynomial-type CLT and the convex concentration property with normalization $s_n=(n^d\{np(1-p)\})^{-1/2}$.
Furthermore, the constant $M$ in Condition~(2) in Definition~\ref{df:concentration} can be taken as $2\sqrt d$.
Thus, the conclusion follows immediately from Theorem~\ref{thm:CLT_poly-diff}.
\end{proof}


\section*{Acknowledgements}
The first author is supported by a JSPS Grant-in-Aid for Scientific Research (A) (JP20H00119).

\begin{footnotesize}
\bibliographystyle{spmpsci}
\bibliography{bib}
\end{footnotesize}

\end{document}